\providecommand{\tabularnewline}{\\}
\numberwithin{equation}{section}
\numberwithin{figure}{section}
\theoremstyle{plain}
\newtheorem{thm}{\protect\theoremname}
\theoremstyle{plain}
\newtheorem{conjecture}[thm]{\protect\conjecturename}
\theoremstyle{definition}
\newtheorem{defn}[thm]{\protect\definitionname}
\theoremstyle{remark}
\newtheorem{rem}[thm]{\protect\remarkname}
\theoremstyle{plain}
\newtheorem{cor}[thm]{\protect\corollaryname}
\theoremstyle{definition}
\newtheorem{example}[thm]{\protect\examplename}
\theoremstyle{plain}
\newtheorem{lem}[thm]{\protect\lemmaname}
\theoremstyle{plain}
\newtheorem{prop}[thm]{\protect\propositionname}
\setlist[enumerate,1]{label=\textnormal{(\roman*)}}
\providecommand{\conjecturename}{Conjecture}
\providecommand{\corollaryname}{Corollary}
\providecommand{\definitionname}{Definition}
\providecommand{\examplename}{Example}
\providecommand{\lemmaname}{Lemma}
\providecommand{\propositionname}{Proposition}
\providecommand{\remarkname}{Remark}
\providecommand{\theoremname}{Theorem}
\begin{document}
\address[Minoru Hirose]{Institute for Advanced Research, Nagoya University,  Furo-cho, Chikusa-ku, Nagoya, 464-8602, Japan}
\email{minoru.hirose@math.nagoya-u.ac.jp}
\subjclass[2020]{11G55, 11M32, 14C15, 14F42}
\title[Mixed Tate motives and cyclotomic MZVs ]{Mixed Tate motives and cyclotomic multiple zeta values of level $2^{n}$
or $3^{n}$}
\author{Minoru Hirose}
\begin{abstract}
Let $N$ be a power of $2$ or $3$, and $\mu_{N}$ the set of $N$-th
roots of unity. We show that the ring of motivic periods of Mixed
Tate motives over $\mathbb{Z}[\mu_{N},\frac{1}{N}]$ is spanned by
the motivic cyclotomic multiple zeta values of level $N$. This implies
that the action of the motivic Galois group of mixed Tate motives
over $\mathbb{Z}[\mu_{N},\frac{1}{N}]$ on the motivic fundamental
group of $\mathbb{G}_{m}-\mu_{N}$ is faithful. This is a generalization
of the known results for $N\in\{1,2,3,4,8\}$ by Deligne and Brown.
We also discuss cyclotomic multiple zeta values of weight $2$ of
other levels.
\end{abstract}

\keywords{mixed Tate motives; cyclotomic multiple zeta values; motivic Galois
group; motivic fundamental group}

\maketitle
\global\long\def\et{\mathrm{\acute{e}t}}%
\global\long\def\gal{\mathrm{Gal}}%
\global\long\def\biseq#1#2{\left\llbracket #1;#2\right\rrbracket }%
\global\long\def\biseqp#1#2{\left\llbracket #1;#2\right\rrbracket _{p}}%
\global\long\def\seq#1{\left\llbracket #1\right\rrbracket }%
\global\long\def\seqp#1{\left\llbracket #1\right\rrbracket _{p}}%

\section{Introduction}

\subsection{Periods of mixed Tate motives}

In \cite{Kontsevich-Zagier}, Kontsevich and Zagier introduced the
class of numbers called \emph{periods}. From the point of view of
the theory of motives, the periods can be interpreted as matrix coefficients
of the comparison isomorphism of the de Rham and Betti realizations
of mixed motives. In this sense, the periods are also called \emph{periods
of mixed motives}, and by considering subclasses of motives, subclasses
of periods are defined. The periods of mixed Tate motives is one of
the most basic subclass of periods, but it is still mysterious. For
example, it seems that the following basic problem remains open: How
to construct all the periods of mixed Tate motives? One of the possible
approach to this question is to study the iterated integrals on the
project line $\mathbb{P}^{1}$. 

Let us recall basic notation of mixd Tate motives and iterated integrals
on $\mathbb{P}^{1}$ based on the description in \cite{Glanois_thesis}.
Let $K\subset\mathbb{C}$ be a number field and $\mathcal{MT}(K)$
the category of mixed Tate motives over $K$. Then $\mathrm{Ext}_{\mathcal{MT}(K)}^{1}(\mathbb{Q}(n),\mathbb{Q}(n+1))$
is canonically isomorphic to $K^{\times}\times\mathbb{Q}$. For a
$\mathbb{Q}$-subspace $\Gamma$ of $\mathrm{Ext}_{\mathcal{MT}(K)}^{1}(\mathbb{Q}(n),\mathbb{Q}(n+1))=K^{\times}\times\mathbb{Q}$,
we denote by $\mathcal{MT}(K,\Gamma)$ the Tannakian subcategory of
$\mathcal{MT}(K)$ formed by objects $M$ such that for any subquotient
$E$ of $M$ and the exact sequence
\[
0\to\mathbb{Q}(n+1)\to E\to\mathbb{Q}(n)\to0,
\]
the corresponding class $[E]\in\mathrm{Ext}_{\mathcal{MT}(K)}^{1}(\mathbb{Q}(n),\mathbb{Q}(n+1))$
is in $\Gamma$. Let $\mathcal{H}(K)$ (resp. $\mathcal{H}(K,\Gamma)$)
be the ring of effective motivic periods of $\mathcal{MT}(K)$ (resp.
$\mathcal{MT}(K,\Gamma)$). Note that $\mathcal{H}(K)=\mathcal{H}(K,K^{\times}\times\mathbb{Q})$.
Then there is a (conjecturally injective) ring homomorphism $\iota:\mathcal{H}(K,\Gamma)\to\mathbb{C}$
called \emph{period map}. For $a_{0},\dots,a_{k+1}\in\mathbb{C}$
with $a_{0}\neq a_{1}$, $a_{k}\neq a_{k+1}$ and a path $\gamma$
from $a_{0}$ to $a_{k+1}$ on $\mathbb{P}^{1}(\mathbb{C})\setminus\{\infty,a_{1},\dots,a_{k}\}$,
the iterated integral symbol is defined by
\[
I_{\gamma}(a_{0};a_{1},\dots,a_{k};a_{k+1})=\int_{0<t_{1}<\cdots<t_{k}<1}\prod_{j=1}^{k}\frac{d\gamma(t_{j})}{\gamma(t_{j})-a_{j}}\in\mathbb{C}.
\]
We call $k$ the \emph{weight} and $\#\{1\leq j\leq k\mid a_{j}\neq0\}$
the \emph{depth}. Furthermore, by the theory of tangential basepoints,
we can extend to the case where the condition $a_{0}\neq a_{1}$ and
$a_{k}\neq a_{k+1}$ is not necessarily true. When $a_{0},\dots,a_{k+1}\in K$,
$I_{\gamma}(a_{0};a_{1},\dots,a_{k};a_{k+1})$ is a period of $\mathcal{MT}(K)$,
and its motivic lift 
\[
I_{\gamma}^{\mathfrak{m}}(a_{0};a_{1},\dots,a_{k};a_{k+1})\in\mathcal{H}(K)
\]
satisfying $\iota(I_{\gamma}^{\mathfrak{m}}(a_{0};a_{1},\dots,a_{k};a_{k+1}))=I_{\gamma}(a_{0};a_{1},\dots,a_{k};a_{k+1})$
is also defined. For a subset $S$ of $K$, when $a_{0},\dots,a_{k+1}\in S$,
we call $I_{\gamma}^{\mathfrak{m}}(a_{0};a_{1},\dots,a_{k};a_{k+1})$
as a motivic iterated integral on $\mathbb{P}^{1}\setminus\{\infty\}\cup S$.
Goncharov conjectured the following:
\begin{conjecture}[Goncharov, \cite{GonConj}]
 Let $K\subset\mathbb{C}$ be a number field. Then $\mathcal{H}(K)$
is spanned by motivic iterated integrals on $\mathbb{P}^{1}\setminus\{\infty\}\cup K$.
\end{conjecture}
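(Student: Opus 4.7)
The plan is to translate the conjecture into the faithfulness of the motivic Galois action on a suitable motivic fundamental group and then verify that faithfulness weight by weight, following the strategy used by Deligne and Brown in the known small-level cases. Let $S = \{0\} \cup K \subset \mathbb{P}^{1}(K)$ and consider the pro-unipotent motivic fundamental group $\pi_{1}^{\mathfrak{m}}(\mathbb{P}_{K}^{1} \setminus (\{\infty\} \cup S), \vec{1}_{0})$ with a suitable tangential basepoint. Its ring of motivic periods is generated precisely by the symbols $I_{\gamma}^{\mathfrak{m}}(a_{0};a_{1},\ldots,a_{k};a_{k+1})$ with $a_{i} \in S$; call the resulting sub-$\mathbb{Q}$-algebra $\mathcal{I}(K) \subseteq \mathcal{H}(K)$. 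The conjecture then amounts to the equality $\mathcal{I}(K) = \mathcal{H}(K)$.

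First I would verify that $\mathcal{I}(K)$ is stable under the motivic coaction, using Goncharov's combinatorial coaction formula, which manifestly sends an iterated integral symbol to a tensor of such symbols. By Tannakian duality, the inclusion $\mathcal{I}(K) \hookrightarrow \mathcal{H}(K)$ corresponds to a surjection $G^{\mathfrak{m}}(K) \twoheadrightarrow G_{\pi}$, where $G^{\mathfrak{m}}(K)$ is the motivic Galois group of $\mathcal{MT}(K)$ and $G_{\pi}$ is the image Tannakian quotient attached to $\pi_{1}^{\mathfrak{m}}$. The conjecture becomes injectivity of this surjection, which, after restricting to the pro-nilpotent radical and passing to graded Lie algebras, reduces to showing that the canonical homomorphism $\mathfrak{g}^{\mathfrak{m}}(K) \to \mathrm{Der}(\mathrm{Lie}\,\pi_{1}^{\mathfrak{m}})$ is injective.

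Because $\mathfrak{g}^{\mathfrak{m}}(K)$ is free pro-nilpotent with graded piece in weight $n$ canonically dual to $\mathrm{Ext}^{1}_{\mathcal{MT}(K)}(\mathbb{Q}(0), \mathbb{Q}(n))$, injectivity can be tested weight by weight: in each $n \geq 1$ one needs the infinitesimal coaction $D_{n}$, restricted to $\mathcal{I}(K)$, to surject onto $\mathrm{Ext}^{1}_{\mathcal{MT}(K)}(\mathbb{Q}(0), \mathbb{Q}(n))^{\vee}$ (tensored with $\mathcal{I}(K)$). Weight $1$ is immediate: the motivic logarithms $\log^{\mathfrak{m}}(a) = I_{\gamma}^{\mathfrak{m}}(0;a;1)$ with $a \in K^{\times}$ realize every class in $K^{\times} \otimes_{\mathbb{Z}} \mathbb{Q} = \mathrm{Ext}^{1}(\mathbb{Q}(0), \mathbb{Q}(1))$. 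An induction on $n$ then reduces the problem to exhibiting, in each weight, a family of iterated integrals whose coactions produce representatives of a basis of $\mathrm{Ext}^{1}$ modulo contributions already accounted for in lower weight.

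The hard part will be exactly this higher-weight generation. By Borel's theorem $\mathrm{Ext}^{1}_{\mathcal{MT}(K)}(\mathbb{Q}(0), \mathbb{Q}(n))$ has dimension determined by the signature $(r_{1},r_{2})$ of $K$ and the parity of $n$, and one must realize a complete set of Borel regulators as $\mathbb{Q}$-linear combinations of iterated integrals of rational $1$-forms on $\mathbb{P}^{1} \setminus (\{\infty\} \cup S)$. In the cyclotomic setting — the case pursued in this paper for $N = 2^{n}, 3^{n}$ — explicit level-$N$ distribution and norm relations supply such integrals, but a general number field carries no analogous structure. A plausible intermediate step is to reduce to a cyclotomic subfield via restriction of scalars and norm-compatibility of iterated integrals, and then transfer along the Galois closure; however, for $K$ not contained in any cyclotomic field genuinely new input is required, and it is this explicit construction of higher Borel regulators as motivic iterated integrals that constitutes the substantive content of the conjecture and the principal obstruction to a proof in full generality.
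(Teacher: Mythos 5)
The statement you were asked about is labeled a conjecture, not a theorem of the paper: the text explicitly says that this conjecture is open for every number field $K$. There is therefore no proof of the statement in the paper to compare against, and no complete proof is currently known.

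Your write-up does not claim to be a proof either, and your closing paragraph correctly identifies the missing step, so there is no overreach. What you have written is a sound account of the standard reduction that has succeeded in the special cyclotomic cases (Deligne for small levels, Brown for $N=1$, this paper for $N$ a power of $2$ or $3$): the subalgebra $\mathcal{I}(K)\subset\mathcal{H}(K)$ generated by motivic iterated integrals with arguments in $K$ is coaction-stable by Goncharov's formula; the inclusion dualizes to a surjection of Tannakian group schemes whose injectivity is precisely the conjecture; passing to the associated graded Lie algebras, using that $\mathfrak{g}^{\mathfrak{m}}(K)$ is free with generators dual to $\mathrm{Ext}^{1}_{\mathcal{MT}(K)}(\mathbb{Q}(0),\mathbb{Q}(n))$, one reduces to a condition on the infinitesimal coaction; weight $1$ is settled by motivic logarithms; and higher weights demand iterated integrals realizing Borel regulators. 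This is structurally the same reduction the paper makes via the coradical filtration and the surjectivity criterion $P(N,k,d)$ of Proposition \ref{prop:restate_P_Nkd}, phrased there on the Hopf-algebra side rather than the Lie-algebra side.

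One imprecision worth flagging: a graded Lie algebra homomorphism from a free graded Lie algebra is not shown injective merely by checking injectivity on the space of generators in each weight; one must also rule out relations among iterated brackets of the images. The dual formulation used by Brown and by this paper, namely surjectivity of the iterated infinitesimal coaction $D_{d}^{\mathrm{iter}}$ from the iterated-integral subspace onto $(\mathrm{gr}_{1}^{C}\mathcal{A})^{\otimes d}$ in each coradical degree $d$, is the precise version and sidesteps this subtlety. You are right, however, that the substantive obstruction is realizing higher Borel classes by iterated integrals on $\mathbb{P}^{1}$ over a general number field, and that no mechanism analogous to cyclotomic distribution relations is available; that is exactly why the statement remains open, and your proposal honestly stops where current knowledge does.
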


This conjecture is open for any number field $K$. By restricting
the class of iterated integrals and periods, we can consider different
variants of the conjecture. One particularly interesting case is when
$a_{0},\dots,a_{k+1}\in\{0\}\cup\mu_{N}$ where $N$ is a positive
integer and $\mu_{N}$ is the set of $N$-th roots of unity. In this
case, the motivic iterated integral $I_{\gamma}^{\mathfrak{m}}(a_{0};a_{1},\dots,a_{k};a_{k+1})$
becomes an element of $\mathcal{H}(\mathbb{Q}(\zeta_{N}),\Gamma_{N})$
where $\zeta_{N}\in\mu_{N}$ is a primitive $N$-th root of unity
and $\Gamma_{N}$ is the subspace of $\mathbb{Q}(\zeta_{N})^{\times}\otimes\mathbb{Q}$
spanned by $\{1-v\mid v\in\mu_{N}\setminus\{1\}\}$.
\begin{defn}
For a positive integer $N$, let $P(N)$ be the statement that the
ring $\mathcal{H}(\mathbb{Q}(\zeta_{N}),\Gamma_{N})$ is spanned by
motivic iterated integrals on $\mathbb{P}^{1}\setminus\{0,\mu_{N},\infty\}$.
\end{defn}

The statement $P(N)$ is not necessary true for all $N$. For example,
Goncharov \cite{Gon-p5} showed the following.
\begin{thm}[Goncharov]
$P(N)$ does not hold if $N\geq5$ is a prime number.
\end{thm}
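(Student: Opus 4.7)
The plan is to disprove $P(p)$ for $p\geq 5$ prime by a dimension count. Let $\mathcal{A}_{p}\subseteq\mathcal{H}(\mathbb{Q}(\zeta_{p}),\Gamma_{p})$ denote the sub-Hopf-algebra generated by motivic iterated integrals on $\mathbb{P}^{1}\setminus(\{0,\infty\}\cup\mu_{p})$; $P(p)$ is precisely the statement $\mathcal{A}_{p}=\mathcal{H}(\mathbb{Q}(\zeta_{p}),\Gamma_{p})$. It suffices to exhibit a weight $w$ for which $\dim(\mathcal{A}_{p})_{w}<\dim\mathcal{H}(\mathbb{Q}(\zeta_{p}),\Gamma_{p})_{w}$.

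The first step is to pin down $\dim\mathcal{H}(\mathbb{Q}(\zeta_{p}),\Gamma_{p})_{w}$ via the Tannakian formalism: $\mathcal{H}(\mathbb{Q}(\zeta_{p}),\Gamma_{p})$ is the graded dual of the universal enveloping algebra of the motivic Lie algebra $\mathfrak{g}$ of $\mathcal{MT}(\mathbb{Z}[\zeta_{p},\tfrac{1}{p}])$, and by Borel's theorem $\dim\mathfrak{g}_{-w}=(p-1)/2$ for every $w\geq 2$, together with an explicit weight-$1$ contribution coming from $\Gamma_{p}$ itself. This determines the Poincar\'e series exactly. Dually, $\mathcal{A}_{p}$ corresponds to the Hopf algebra of functions on the image of $\mathfrak{g}$ acting on the de~Rham motivic fundamental Lie algebra of $\mathbb{G}_{m}\setminus\mu_{p}$ at the tangential basepoint $\vec{1}_{0}$. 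Equipping $\mathcal{A}_{p}$ with the depth filtration (the minimal number of nonzero $a_{j}$'s appearing in an iterated-integral symbol) and using Goncharov's motivic coproduct, the question is reduced to computing the associated graded Lie coalgebra $\mathrm{gr}^{D}\mathfrak{a}$; its depth-$1$ part is well understood through motivic polylogarithms, so all the nontrivial content is at depth $\geq 2$.

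The main obstacle, and the heart of Goncharov's argument, is to control $\mathrm{gr}_{2}^{D}\mathfrak{a}$ through a period-polynomial computation. Concretely, each cusp form on $\Gamma_{1}(p)$ produces, via its period polynomial, a linear relation among depth-$2$ motivic iterated integrals on $\mathbb{P}^{1}\setminus\{0,\mu_{p},\infty\}$ which is not a consequence of depth-$1$ relations and does not lift to a relation in $\mathcal{H}(\mathbb{Q}(\zeta_{p}),\Gamma_{p})$. Since $\dim S_{k}(\Gamma_{1}(p))>0$ for some $k$ as soon as $p\geq 5$, these cuspidal relations force $\dim(\mathrm{gr}_{2}^{D}\mathfrak{a})_{-w}$ to fall short of the Borel count in some weight $w$. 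Combined with the previous step, this produces the strict inequality $\dim(\mathcal{A}_{p})_{w}<\dim\mathcal{H}(\mathbb{Q}(\zeta_{p}),\Gamma_{p})_{w}$ and disproves $P(p)$. The delicate point is to identify a specific $w$ where the gap first manifests and to verify that the modular-symbol relation genuinely fails to extend to all of $\mathcal{H}(\mathbb{Q}(\zeta_{p}),\Gamma_{p})$; this is where the bulk of the technical work lies.
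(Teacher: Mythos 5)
The paper does not give a proof of this theorem; it cites Goncharov \cite{Gon-p5}. The content of Goncharov's argument, as this paper reflects in Section 6, is that for prime $p\geq 5$ the cokernel of the map $D_{2}^{\mathrm{iter}}:(X\otimes X)_{2}\to(Y\otimes Y)_{2}$ in weight $2$ has dimension $\kappa(p)=(p^{2}-1)/24>0$, so that $P(p,2,2)$ fails; since every weight-$2$ iterated integral has depth at most $2$ and $\mathcal{A}_{2}=C_{2}\mathcal{A}_{2}$, one has the implication $P(p)\Rightarrow P(p,2,2)$, and $P(p)$ is thereby refuted. Your proposal is in this general spirit (dimension count, depth $2$, modular-symbol input), but as written it has several genuine gaps.

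First, the Borel input is misstated: ``$\dim\mathfrak{g}_{-w}=(p-1)/2$ for every $w\geq 2$'' is false. Borel's theorem gives $\dim\mathrm{Ext}^{1}_{\mathcal{MT}}(\mathbb{Q}(0),\mathbb{Q}(w))=(p-1)/2$, i.e.\ the number of \emph{free generators} of $\mathfrak{g}$ in weight $w$; the Lie algebra itself grows rapidly because of brackets. This matters because the whole argument rests on a correct Poincar\'e series for $\mathcal{H}$.

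Second, and more seriously, you never specify the weight at which the gap appears, and you never justify why a deficit in $\mathrm{gr}^{D}_{2}$ forces a deficit in the \emph{total} dimension of $\mathcal{A}_{p}$ in some weight. A priori higher-depth contributions in the same weight could compensate, and indeed the paper's Remark (about $P(1,12,2)$ failing while $P(1)$ holds) shows that a depth-graded failure does not in general imply failure of $P(N)$. The argument only closes cleanly in the case $k=d$, where there are no higher-depth iterated integrals to interfere; for Goncharov's theorem one takes $k=d=2$. You flag this as a ``delicate point'' but this observation is exactly what makes the proof go through, and it must be made explicit.

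Third, the phrase ``a linear relation among depth-$2$ motivic iterated integrals \dots which does not lift to a relation in $\mathcal{H}$'' is internally inconsistent: any genuine relation among motivic iterated integrals is already a relation in $\mathcal{H}$, since they are elements of it. What is actually needed is a relation in $\mathrm{gr}^{C}_{2}\mathcal{A}_{2}$ among the coradical-graded classes that is \emph{not} in the image of $D_{2}^{\mathrm{iter}}$; equivalently, an element of the cokernel. Finally, the central computation---producing the modular-symbol cocycle and verifying it gives a nonzero class in that cokernel, hence $\kappa(p)>0$---is exactly what you describe as ``the bulk of the technical work,'' and it is left undone. As it stands, the proposal is a correct heuristic but not a proof.
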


In \cite{Deli_es}, Deligne proves the following.\footnote{Deligne also showed that $\mathcal{H}(\mathbb{Q}(\zeta_{3}),\{1\})$
is spanned by $I_{\gamma}^{\mathfrak{m}}(a_{0};a_{1},\dots,a_{k};a_{k+1})$
with $a_{0},\dots,a_{k+1}\in\{0,1,\zeta_{6}\}$.}
\begin{thm}[Deligne]
\label{thm:intro-Deligne}$P(N)$ holds if $N\in\{2,3,4,8\}$.
\end{thm}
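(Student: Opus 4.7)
The plan is to reformulate $P(N)$ as a faithfulness statement for the motivic Galois group and prove it by bootstrapping from an explicit depth-one calculation. Let $X_N = \mathbb{G}_{m} \setminus \mu_N$ with tangential basepoint $\vec{v}$ at $0$ in the direction $1$, let $\pi_1^{\mathrm{mot}}(X_N,\vec{v})$ be its pro-unipotent motivic fundamental group in $\mathcal{MT}(\mathbb{Z}[\zeta_N,\tfrac{1}{N}])$, and let $\mathfrak{U}_N$ denote the pro-unipotent radical of the corresponding motivic Galois group. The span of motivic iterated integrals on $\mathbb{P}^{1} \setminus (\{0,\infty\} \cup \mu_N)$ inside $\mathcal{H}(\mathbb{Q}(\zeta_N),\Gamma_N)$ is precisely the sub-comodule of matrix coefficients of the $\mathfrak{U}_N$-action on $\pi_1^{\mathrm{mot}}(X_N,\vec{v})$, so $P(N)$ becomes equivalent to the faithfulness of that action.

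The first step is the Lie-algebra translation. The de Rham fibre of $\pi_1^{\mathrm{mot}}(X_N,\vec{v})$ is pro-unipotent with Lie algebra equal to the completed free graded Lie algebra $\mathfrak{L}_N$ on letters $e_0$ and $\{e_\zeta\}_{\zeta \in \mu_N}$, and the action of $\mathfrak{U}_N$ factors through the Ihara Lie algebra $\mathrm{Der}^{*}(\mathfrak{L}_N)$ of special derivations. The goal becomes the injectivity of a graded Lie-algebra map
\[
\rho : \mathfrak{u}_N \longrightarrow \mathrm{Der}^{*}(\mathfrak{L}_N).
\]
Combining the $\mathrm{Ext}^{1}$ calculation quoted in the excerpt with Borel's theorem on higher $K$-theory of cyclotomic rings of integers, one sees that $\mathfrak{u}_N$ is a \emph{free} graded Lie algebra with known generating ranks in each weight, contributed by the unit group of $\mathbb{Z}[\zeta_N,\tfrac{1}{N}]$ in weight $1$ and by $K_{2n-1}$ in odd weights $\geq 3$. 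The freeness means that injectivity of $\rho$ reduces to injectivity on a set of free generators, and that can be detected after projecting to the depth-one quotient $\mathrm{Der}^{*}(\mathfrak{L}_N) \to \mathfrak{L}_N^{\mathrm{ab}}$, where the image is computed by explicit depth-one motivic iterated integrals: the motivic logarithms $\log^{\mathfrak{m}}(1-\zeta)$ in weight $1$ and $I^{\mathfrak{m}}(0;\zeta,0,\dots,0;1)$ in higher weights.

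The third step is therefore to verify the $\mathbb{Q}$-linear independence of these depth-one motivic periods in the required ranks, and here the smallness of the four values $N \in \{2,3,4,8\}$ is essential. For $N = 2$ it reduces to the classical independence of $\log^{\mathfrak{m}}(2)$ and the motivic odd zeta values; for $N = 3$ and $N = 4$ only a single new weight-one generator (coming from $1-\zeta_3$ or from $1+i$) appears and the higher-weight check is one-dimensional per weight. For $N = 8$, however, the unit group of $\mathbb{Z}[\zeta_8,\tfrac{1}{2}]$ has rank $2$ and the ranks of $K_{2n-1}$ are $2$ in every odd weight, so one has to match two-dimensional spaces of abstract generators against two-dimensional spaces of depth-one iterated integrals in every odd weight simultaneously.

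I expect the main obstacle to be precisely this weight-by-weight independence at $N=8$: the dimension count is tight, and one needs a clean mechanism by which injectivity on the weight-$1$ and depth-$1$ parts propagates to injectivity of the full $\rho$. Concretely, I would need a Deligne-type criterion saying that for these specific $N$ the free generators of $\mathfrak{u}_N$ can be chosen so that their images under $\rho$ have non-zero depth-one components given by a prescribed invertible matrix of periods; showing such a matrix is invertible for $N=8$ is where the argument is genuinely new, and also presumably where it breaks down for larger $N$, explaining why the theorem is limited to $N\in\{2,3,4,8\}$.
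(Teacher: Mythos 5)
The paper does not prove this statement; it is quoted as a theorem of Deligne and the reference \cite{Deli_es} is given. So there is no in-paper proof to compare against, but your sketch does not reproduce Deligne's actual argument, and it contains a genuine logical gap in step three. You write that ``freeness means that injectivity of $\rho$ reduces to injectivity on a set of free generators, and that can be detected after projecting to the depth-one quotient.'' Neither half of this claim is valid. A Lie algebra homomorphism out of a free graded Lie algebra can be injective on a generating subspace without being injective overall: for instance the map $\mathfrak{L}(x,y,z)\to\mathfrak{L}(a,b)$ sending $x\mapsto a$, $y\mapsto b$, $z\mapsto[a,b]$ is injective on generators but kills $[x,y]-z$. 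Worse, the target you propose projecting to, $\mathfrak{L}_N^{\mathrm{ab}}$, is abelian, so the composite $\mathfrak{u}_N\to\mathfrak{L}_N^{\mathrm{ab}}$ annihilates all brackets and factors through $\mathfrak{u}_N^{\mathrm{ab}}$; its injectivity can only ever tell you that the generators of $\mathfrak{u}_N$ have independent images, not that $\rho$ itself is injective. Your closing proposal (choose generators whose depth-one components form an invertible matrix of periods) suffers the same defect: an invertible matrix controls the abelianization of the map, not the map.

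What actually closes the gap in Deligne's argument (and in Brown's for $N=1$, and in the present paper's generalization) is an inductive bootstrap via the motivic coaction rather than a depth-one projection. One exhibits an explicit candidate family of motivic iterated integrals, filters by coradical degree, and uses the infinitesimal coaction operators together with the inductive hypothesis in lower weight and lower coradical degree to show the candidates span; dually this establishes injectivity of $\rho$ degree by degree, with the depth-one (weight-graded abelianization) computation serving only as the base of the induction. This is exactly the coradical filtration mechanism the present paper sets up in Section~\ref{sec:Comb} and exploits in Section~\ref{sec:Proof-of-main}, and the statement $P(N,k,d)\Rightarrow P(N)$ is the form in which the induction is run. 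Your sketch correctly identifies the faithfulness reformulation, the freeness of $\mathfrak{u}_N$, the role of the unit group and Borel's theorem, and the numerology at $N=8$ where the rank jumps to two, but it is missing the coaction induction that actually upgrades the depth-one independence into injectivity of the full map.
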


Furthermore, in \cite{Bro_mix}, Brown proves the following.
\begin{thm}[Brown]
\label{thm:intro-Brown}$P(1)$ holds.
\end{thm}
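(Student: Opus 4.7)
The plan is to follow Brown's coaction strategy: produce an explicit family of motivic multiple zeta values of the expected cardinality in each weight and use the motivic Galois coaction to bootstrap linear independence by induction on the weight. By the Tannakian description of mixed Tate motives over $\mathbb{Z}$, the ring $\mathcal{H}(\mathbb{Q},\Gamma_{1})$ is the affine coordinate ring of a group of the form $\mathbb{G}_{m}\ltimes\mathcal{U}$, where $\mathrm{Lie}(\mathcal{U})$ is free on one generator in each odd degree $\geq 3$. Consequently the dimensions $d_{N}:=\dim\mathcal{H}(\mathbb{Q},\Gamma_{1})_{N}$ satisfy the recursion $d_{N}=d_{N-2}+d_{N-3}$ with $d_{0}=1$, $d_{1}=0$, $d_{2}=1$.

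Let $V\subset\mathcal{H}(\mathbb{Q},\Gamma_{1})$ be the subspace spanned by motivic iterated integrals on $\mathbb{P}^{1}\setminus\{0,1,\infty\}$. First I would verify that $V$ is stable under the motivic coaction $\Delta:\mathcal{H}\to\mathcal{A}\otimes\mathcal{H}$, where $\mathcal{A}=\mathcal{H}/\zeta^{\mathfrak{m}}(2)\mathcal{H}$; this is a direct application of Goncharov's explicit formula, which expresses $\Delta$ of a motivic iterated integral on $\{0,1,\infty\}$ as a polynomial in such integrals. Next, following Hoffman, I would single out the subset $H_{N}\subset V$ of motivic MZVs $\zeta^{\mathfrak{m}}(s_{1},\dots,s_{k})$ of weight $N$ with each $s_{i}\in\{2,3\}$, whose cardinality is exactly $d_{N}$. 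It then suffices to prove that $H=\bigsqcup_{N}H_{N}$ is linearly independent in $\mathcal{H}(\mathbb{Q},\Gamma_{1})$.

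The core of the argument is an induction on $N$ driven by the infinitesimal coactions $D_{2r+1}:\mathcal{H}_{N}\to\mathcal{L}_{2r+1}\otimes\mathcal{H}_{N-2r-1}$, where $\mathcal{L}$ is the Lie coalgebra of indecomposables of $\mathcal{A}$. Together with the projection to $\mathbb{Q}\cdot\zeta^{\mathfrak{m}}(N)$ (for $N$ odd) or $\mathbb{Q}\cdot\zeta^{\mathfrak{m}}(2)^{N/2}$ (for $N$ even), these operators detect every element of $\mathcal{H}(\mathbb{Q},\Gamma_{1})_{N}$. Evaluating $D_{2r+1}$ on a Hoffman element via Goncharov's formula produces a tensor whose second factor is a polynomial in lower-weight Hoffman elements, which by the inductive hypothesis can be compared with known structure constants. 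Any hypothetical linear relation among $H_{N}$ is thereby converted into a linear condition on a matrix of rationals built from lower-weight Hoffman data.

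The main obstacle is to prove that this matrix has full rank. This is equivalent to Zagier's explicit closed-form evaluation of $\zeta(\{2\}^{a},3,\{2\}^{b})$ as a $\mathbb{Q}$-linear combination of products $\pi^{2m}\zeta(2k+1)$, a nontrivial hypergeometric identity proved analytically over $\mathbb{C}$. To use it at the motivic level one must first promote it to an identity of motivic periods; this is possible because the two sides of the lifted identity have equal infinitesimal coactions (a direct computation using Goncharov's formula) and so differ by a primitive element of weight $N$, whose space is one-dimensional and is separated by the period map together with the induction hypothesis. Once Zagier's relation is available motivically, the induction closes and $V=\mathcal{H}(\mathbb{Q},\Gamma_{1})$, which is $P(1)$.
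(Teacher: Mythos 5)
The paper does not give a proof of Theorem \ref{thm:intro-Brown}; it is quoted as Brown's result from \cite{Bro_mix} and explicitly excluded from the new arguments (see the footnote to Theorem \ref{thm:main1}). So there is nothing internal to compare against. What you have written is a fair high-level outline of Brown's actual argument: the freeness of the unipotent part of the motivic Galois group, the dimension recursion $d_{N}=d_{N-2}+d_{N-3}$, the Hoffman family $\zeta^{\mathfrak{m}}(s_{1},\dots,s_{k})$ with $s_{i}\in\{2,3\}$, the coaction-stability of the span of motivic iterated integrals via Goncharov's formula, and the induction on weight driven by the operators $D_{2r+1}$, with Zagier's evaluation of $\zeta(\{2\}^{a},3,\{2\}^{b})$ promoted to a motivic identity by comparing coactions and using one-dimensionality of the weight-$N$ primitives.

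One imprecision worth flagging. You assert that the full-rank condition on the coaction matrix ``is equivalent to'' Zagier's evaluation formula. That collapses two logically separate steps in Brown's paper. Zagier's theorem gives closed forms for the entries $A_{a,b},B_{a,b}$ of the relevant matrix; proving the resulting matrix is invertible is a further arithmetic lemma of Brown's, which proceeds by a $2$-adic estimate on those coefficients (showing a certain $2$-adic valuation is attained by exactly one term, so a relevant minor is a $2$-adic unit). Without that second step, Zagier's formula alone does not close the induction. You should also be slightly careful with the phrase ``projection to $\mathbb{Q}\cdot\zeta^{\mathfrak{m}}(N)$ (for $N$ odd)'': what Brown really uses is that the intersection of the kernels of all $D_{2r+1}$ with $2r+1<N$ in weight $N$ is $\mathbb{Q}\,\zeta^{\mathfrak{m}}(2)^{N/2}$ for $N$ even and is spanned by $\zeta^{\mathfrak{m}}(N)$ for $N$ odd; there is no intrinsic projection onto $\mathbb{Q}\,\zeta^{\mathfrak{m}}(N)$ prior to the conclusion of the argument.
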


The only known cases where $P(N)$ is true were the Deligne's and
Brown's theorems above. In this paper, we prove the following.\footnote{In fact, in this paper, we do not treat the cases $N\in\{1,2\}$,
which is already covered by Brown's and Deligne's results.}
\begin{thm}
\label{thm:main1}$P(N)$ holds if $N$ is a power of $2$ or $3$.
\end{thm}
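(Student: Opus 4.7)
The plan is to follow the Tannakian strategy of Deligne and Brown: reformulate $P(N)$ as a faithfulness statement for the motivic Galois action on the unipotent fundamental groupoid of $\mathbb{G}_{m} \setminus \mu_{N}$, and reduce it, by a standard Hopf-algebra induction on weight, to a linear-algebra statement on indecomposables. Concretely, let $\mathcal{A}_{\bullet} \subset \mathcal{H}(\mathbb{Q}(\zeta_{N}),\Gamma_{N})$ be the graded subring spanned by motivic iterated integrals with arguments in $\{0\} \cup \mu_{N}$. Goncharov's coproduct formula shows that $\mathcal{A}_{\bullet}$ is stable under the motivic coaction, so the equality $\mathcal{A}_{\bullet} = \mathcal{H}(\mathbb{Q}(\zeta_{N}),\Gamma_{N})$ is equivalent to the induced map
\[
\mathcal{A}_{n} \big/ (\mathcal{A}_{>0}^{2})_{n} \;\longrightarrow\; \mathcal{L}_{n} \;:=\; \mathcal{H}_{n} \big/ (\mathcal{H}_{>0}^{2})_{n}
\]
being surjective in every weight $n \geq 1$, where $\mathcal{L}_{\bullet}$ is the Lie coalgebra of indecomposables of $\mathcal{H}(\mathbb{Q}(\zeta_{N}),\Gamma_{N})$.

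The target dimensions are explicit from Borel's theorem and the $K$-theory of $\mathbb{Z}[\mu_{N},1/N]$. In weight $1$, $\mathcal{L}_{1} \cong \Gamma_{N}$, and surjectivity reduces to the classical fact that the cyclotomic units $\{1-\zeta : \zeta \in \mu_{N} \setminus \{1\}\}$ span $\mathbb{Z}[\mu_{N},1/N]^{\times} \otimes \mathbb{Q}$ when $N$ is a prime power. In weight $n \geq 2$ the natural candidate classes are the motivic polylogarithms $\mathrm{Li}_{n}^{\mathfrak{m}}(\zeta) = I^{\mathfrak{m}}(0;0,\dots,0,\zeta;1)$; under the Beilinson regulator their images decompose as cyclotomic $L$-values at $n$, and the needed linear independence follows from nonvanishing of $L(n,\chi)$ for Dirichlet characters of conductor dividing $N$ combined with character orthogonality over $\gal(\mathbb{Q}(\zeta_{N})/\mathbb{Q})$.

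The main obstacle, and the place where the restriction to $N = p^{k}$ with $p \in \{2,3\}$ must be exploited essentially, is that for general $N$ the depth-$1$ classes above do \emph{not} suffice: Goncharov's theorem cited earlier shows $P(p)$ fails for primes $p \geq 5$, reflecting genuinely new depth-$\geq 2$ contributions in $\mathcal{L}_{\bullet}$. The key structural feature of the prime-power case with $p \in \{2,3\}$ is that the ranks of $\mathbb{Z}[\mu_{p^{k}},1/p]^{\times}$ and of $K_{2n-1}(\mathbb{Z}[\mu_{p^{k}},1/p]) \otimes \mathbb{Q}$ align exactly with the number of orbits of primitive cyclotomic polylog classes produced in depth $1$, so that the dimension bookkeeping on the two sides of the comparison matches. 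The hardest part of the proof will therefore be to organize this dimension count uniformly over all $k$ and $n$, and in particular to verify that the regulator matrix attached to $\{\mathrm{Li}_{n}^{\mathfrak{m}}(\zeta)\}_{\zeta \in \mu_{p^{k}}}$ has full rank modulo distribution and conjugation relations --- thereby settling the cases $N = 9, 16, 27, 32, \dots$ simultaneously, without the case-by-case verification underlying Theorem~\ref{thm:intro-Deligne}.
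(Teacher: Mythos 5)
There is a fundamental gap in your proposal. Your reduction of $P(N)$ to surjectivity of the subring of iterated integrals onto the space of indecomposables $\mathcal{L}_n$ is a legitimate Hopf-algebra criterion, but you then attempt to establish this surjectivity using only the depth-one classes $\mathrm{Li}_n^{\mathfrak{m}}(\zeta)$, and that cannot work in weight $n\geq 2$. Depth-one motivic iterated integrals span the primitives $P(\mathcal{A})=\mathrm{gr}_1^C\mathcal{A}$, whose weight-$n$ piece has dimension $\dim_{\mathbb{Q}}\mathrm{Ext}^1_{\mathcal{MT}}(\mathbb{Q}(0),\mathbb{Q}(n))$; this is precisely the Deligne--Goncharov result quoted in Theorem~\ref{thm:DG-depth1}, and it holds for \emph{every} $N$, so it cannot distinguish $N=9$ from $N=5$. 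Since $\mathcal{A}$ is commutative but far from cocommutative, the natural map $P(\mathcal{A})_n\to Q(\mathcal{A})_n=\mathcal{L}_n$ is injective but not surjective once $n\geq 2$: $\mathcal{L}_n$ is the linear dual of the degree-$n$ piece of the free Lie algebra $\mathfrak{u}$, whose dimension grows without bound, while $\dim P(\mathcal{A})_n$ stays essentially constant (about $\varphi(N)/2$). Consequently the regulator and $L$-function nonvanishing argument you sketch, even if executed perfectly, establishes only the depth-one statement, which is already known for all $N$ and is manifestly insufficient because $P(N)$ fails for $N$ a prime $\geq 5$. The ``dimension bookkeeping'' you invoke matches $K$-theoretic ranks against orbit counts of depth-one polylogarithms, but these are dimensions of $P(\mathcal{A})$, not of $\mathcal{L}$; no such count certifies surjectivity onto $\mathcal{L}_n$.

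The paper's proof does not pass to indecomposables and does not attempt any regulator computation. It uses the full coradical filtration and the canonical isomorphisms $\mathrm{gr}_d^C\mathcal{A}\cong\mathrm{gr}_1^C\mathcal{A}\otimes\mathrm{gr}_{d-1}^C\mathcal{A}$ to reduce $P(N)$ to the family of depth-graded statements $P(N,k,d)$ of Definition~\ref{def:P_Nkd}, one for each $d\geq 1$. Each $P(N,k,d)$ is recast as a purely combinatorial surjectivity statement about a map on tensor powers of a finitely generated module (Section~\ref{sec:Comb}), which is in turn deduced from the unipotence of an explicit endomorphism $\mathcal{E}_d$ after reduction modulo $p$ (Theorem~\ref{thm:strong_main}). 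The hypothesis $N=qp^M$ with $p\in\{2,3\}$ is used essentially there, through the vanishing pattern of $\tilde{\theta}$ and a delicate lexicographic filtration on tuples that controls how $\tilde{L}_i$, $\tilde{R}_i$, and $S$ interact. Nothing in your outline engages with this depth-$\geq 2$ structure, which is the actual content of the theorem; what you propose to verify would prove, were it sufficient, the false statement that $P(N)$ holds for all $N$.
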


\begin{rem}
$P(N)$ is equivalent to the faithfulness of the action of the motivic
Galois group of $\mathcal{MT}(\mathbb{Q}(\zeta_{N}),\Gamma_{N})$
on the motivic fundamental group of $\mathbb{G}_{m}-\mu_{N}$. Thus,
$P(N)$ can be viewed as an analogy of faithfulness of the action
of the absolute Galois group $\mathrm{Gal}(\bar{\mathbb{Q}}/\mathbb{Q})$
on $\pi_{1}(\mathbb{P}^{1}\setminus\{0,1,\infty\})$ proved by Belyi
\cite{Belyi}.
\end{rem}

\begin{rem}
Let $K$ be a number field, $S$ a set of finite places of $K$, and
$\mathcal{O}_{S}$ the ring of $S$-integers of $K$. Then $\mathcal{MT}(K,\mathcal{O}_{S}^{\times}\otimes\mathbb{Q})$
is sometimes called the mixed Tate motives over $\mathcal{O}_{S}$
and denoted as $\mathcal{MT}(\mathcal{O}_{S})$. Then $\mathcal{MT}(\mathbb{Q}(\zeta_{N}),\Gamma_{N})$
coincides with $\mathcal{MT}(\mathbb{Z}[\mu_{N},\frac{1}{N}])$ when
$N$ is a power of $2$ or $3$, which justifies what is written in
the abstract of this paper.
\end{rem}

See Remark \ref{rem:caseN348} for the critical difference between
the cases of $N\leq8$ and $N\geq9$.

\subsection{Coradical filtration and depth structure}

Let $N$ be a positive integer. We simply write $\mathcal{H}$ for
$\mathcal{H}(\mathbb{Q}(\zeta_{N}),\Gamma_{N})$ if there is no risk
of confusion. We put $\mathcal{A}:=\mathcal{H}/(2\pi i)^{\mathfrak{m}}\mathcal{H}$
where $(2\pi i)^{\mathfrak{m}}\in\mathcal{H}$ is the motivic $2\pi i$.
Then $\mathrm{Spec}(\mathcal{A})$ is identified with the prounipotent
part of the motivic Galois group of $\mathcal{MT}(\mathbb{Q}(\zeta_{N}),\Gamma_{N})$
with respect to the canonical functor, and thus $\mathcal{A}$ has
the structure of Hopf algebra. Let $\{0\}=C_{-1}\mathcal{A}\subset C_{0}\mathcal{A}\subset C_{1}\mathcal{A}\subset\cdots$
be the coradical filtration\footnote{Thus, $C_{0}\mathcal{A}=\mathbb{Q}$ and $C_{d}\mathcal{A}=\{u\in\mathcal{A}\mid\Delta(u)-1\otimes u\in\mathcal{A}\otimes C_{d-1}\mathcal{A}\}$
for $d\geq1$.} of $\mathcal{A}$, and $\mathrm{gr}_{d}^{C}\mathcal{A}\coloneqq C_{d}\mathcal{A}/C_{d-1}\mathcal{A}$
the associated graded pieces. Then the coproduct $\Delta:\mathcal{A}\to\mathcal{A}\otimes\mathcal{A}$
induces an isomorphism $\mathcal{D}:\mathrm{gr}_{d}^{C}\mathcal{A}\simeq\mathrm{gr}_{1}^{C}\mathcal{A}\otimes\mathrm{gr}_{d-1}^{C}\mathcal{A}$
for $d\geq1$. Let $I^{\mathfrak{a}}(a_{0};a_{1},\dots,a_{k};a_{k+1})$
be the image of $I_{\gamma}^{\mathfrak{m}}(a_{0};a_{1},\dots,a_{k};a_{k+1})$
in $\mathcal{A}$, which does not depend on the choice of $\gamma$.
We call $I^{\mathfrak{a}}(a_{0};a_{1},\dots,a_{k};a_{k+1})$ a mod
$(2\pi i)^{\mathfrak{m}}$ motivic iterated integral. Then any motivic
iterated integral of depth $\leq d$ is in $C_{d}\mathcal{A}^{\mathcal{MT}_{N}}$
(see Lemma \ref{lem:inCd}). We define the coradical graded motivic
iterated integrals on $\mathbb{P}^{1}\setminus\{0,\mu_{N},\infty\}$
by
\[
I^{\mathfrak{C}}(a_{0};a_{1},\dots,a_{k};a_{k+1}):=\left(I^{\mathfrak{a}}(a_{0};a_{1},\dots,a_{k};a_{k+1})\bmod C_{d-1}\mathcal{A}\right)\in\mathrm{gr}_{d}^{C}\mathcal{A}
\]
where $d$ is the depth $\#\{1\leq j\leq k\mid a_{j}\neq0\}$. Let
$\mathcal{A}_{k}$ be the weight $k$ part of $\mathcal{A}$. Note
that $\mathcal{A}=\bigoplus_{k=0}^{\infty}\mathcal{A}_{k}$, $(2\pi i)^{\mathfrak{m}}\in\mathcal{A}_{1}$,
and $I^{\mathfrak{a}}(a_{0};a_{1},\dots,a_{k};a_{k+1})\in\mathcal{A}_{k}$. 
\begin{defn}
\label{def:P_Nkd}For positive integers $N$, $k$, and $d$, let
$P(N,k,d)$ be the statement that $\mathrm{gr}_{d}^{C}\mathcal{\mathcal{A}}_{k}$
is spanned by the coradical graded motivic iterated integrals of weight
$k$ and depth $d$ on $\mathbb{P}^{1}\setminus\{0,\mu_{N},\infty\}$.
\end{defn}

Obviously, we have
\begin{equation}
(\forall k\forall d\ P(N,k,d))\Rightarrow P(N).\label{eq:PNK_PN}
\end{equation}
As explained later in Section \ref{sec:Comb}, there is a combinatorial
restatement of $P(N,k,d)$. Deligne's result \cite{Deli_es} implies
$P(N,k,d)$ for $N\in\{2,3,4,8\}$. In this paper, we prove the following.
\begin{thm}
\label{thm:main-kd}If $N\neq1$ is a power of $2$ or $3$, then
$P(N,k,d)$ holds for any $k,d\in\mathbb{Z}_{\geq0}$.
\end{thm}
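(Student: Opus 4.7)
The plan is to proceed by induction on the depth $d$, working within the combinatorial model of $\mathrm{gr}_\bullet^C\mathcal{A}_\bullet$ developed in Section~\ref{sec:Comb}. The base $d=0$ is trivial, since $\mathrm{gr}_0^C\mathcal{A}=\mathbb{Q}$ is realized by the empty iterated integral. The case $d=1$ requires showing that the depth-one coradical graded integrals $I^{\mathfrak{C}}(0;\zeta,0,\dots,0;1)$ with $\zeta\in\mu_N$ span $\mathrm{gr}_1^C\mathcal{A}_k$ in every weight $k$. The dimension of $\mathrm{gr}_1^C\mathcal{A}_k$ equals $\dim_{\mathbb{Q}}\mathrm{Ext}^1_{\mathcal{MT}(\mathbb{Q}(\zeta_N),\Gamma_N)}(\mathbb{Q}(0),\mathbb{Q}(k))$, which for $\mathcal{MT}(\mathbb{Z}[\mu_N,\tfrac{1}{N}])$ is computable from Borel's theorem; for $N$ a prime power of $2$ or $3$ the classical distribution and complex-conjugation relations among motivic polylogarithms then produce exactly the required number of independent elements.

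For the inductive step, I assume $P(N,k',d-1)$ for every $k'$ and apply the coproduct isomorphism $\mathcal{D}\colon \mathrm{gr}_d^C\mathcal{A}_k \xrightarrow{\sim} \bigoplus_i \mathrm{gr}_1^C\mathcal{A}_i \otimes \mathrm{gr}_{d-1}^C\mathcal{A}_{k-i}$. By Goncharov's combinatorial coproduct formula, the image under $\mathcal{D}$ of a depth-$d$ coradical graded integral is a linear combination of tensors of a depth-one coradical graded integral with a depth-$(d-1)$ one. By the inductive hypothesis these tensor products span the codomain, so the inductive step reduces to proving that $\mathcal{D}$, restricted to the span of depth-$d$ coradical graded integrals, is surjective.

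The main obstacle, and the technical heart of the argument, is precisely this surjectivity. Concretely, for each basis pair $(\omega_1,\omega_{d-1})\in\mathrm{gr}_1^C\mathcal{A}_i\otimes\mathrm{gr}_{d-1}^C\mathcal{A}_{k-i}$ one must exhibit a depth-$d$ coradical graded integral whose image under $\mathcal{D}$ equals $\omega_1\otimes\omega_{d-1}$ modulo terms lower in a chosen lexicographic order on the argument tuples $(a_1,\dots,a_k)\in(\{0\}\cup\mu_N)^k$, and then verify that the resulting coaction matrix is unimodular. This is where the assumption on $N$ plays its essential role (cf.\ Remark~\ref{rem:caseN348}): Deligne's treatment of $N\in\{2,3,4,8\}$ uses the fact that $\mathrm{gr}_1^C\mathcal{A}_k$ has rank at most one in each weight, reducing the problem to a single identity per weight, whereas for general powers of $2$ or $3$ the rank grows with $N$. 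To close the induction in this case I expect one must exploit the prime-power tower $\mu_{p^n}\supset\mu_{p^{n-1}}\supset\cdots\supset\mu_p$ and its level-by-level distribution relations to build a combinatorially triangular family of depth-$d$ integrals whose leading coactions are controlled explicitly. Constructing this family and verifying triangularity is the principal technical undertaking of the proof.
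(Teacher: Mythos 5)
Your outline correctly identifies the skeleton the paper follows: pass to the coradical graded pieces, use the coproduct isomorphism $\mathcal{D}$, pick a preferred set of depth-$d$ generators, and show that the induced map is invertible. You also correctly locate the crux at the invertibility of the map $W^{\otimes d}\to Y\otimes W^{\otimes(d-1)}$ and correctly observe that the prime-power tower must enter.

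However, the way you propose to close the argument has a genuine gap, in two respects. First, the paper does not attempt to show directly over $\mathbb{Q}$ that the "coaction matrix is unimodular" via a triangularity argument in a lexicographic order on the argument tuples $(a_1,\dots,a_k)$. Such an order does not triangularize the coproduct, because a typical term of the coaction is indexed by a ratio $\epsilon_i/\epsilon_{i+1}\in\mu_N$ and then $\Theta$ of that ratio (the inverse of the depth-$1$ isomorphism) spreads this back out over a whole coset of $\nu_N$ with nontrivial coefficients depending on the valuation; this scrambling has no compatible monomial order on the argument tuples, and the entries one would need to show are invertible are sums of signed powers of $p$, not readily seen to be units over $\mathbb{Q}$. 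Second, and more fundamentally, you are missing the key enabling step: reduction modulo $p$. The paper chooses the lattice $W^{(p)}$, notes it is stable under $E_d=\iota_d\circ D_d$, and then only needs to show that the mod-$p$ operator $\mathcal{E}_d$ is \emph{unipotent} on $\mathcal{W}^{\otimes d}$. Modulo $p$ the formula for $\Theta(\langle x;r\rangle)$ collapses drastically (it vanishes unless $r=0$ and $x\ne1$, and then it becomes a $\{0,1\}$-coefficient sum over a coset), so the operator $\mathcal{E}_d-\mathrm{id}$ becomes tractable. The unipotence is then proved against a filtration $V_J$ indexed not by an order on argument tuples but by tuples $J=(n_1,m_1,\dots,n_{d-1},m_{d-1},n_d,l)$ recording subgroup levels $U(n_i)\subset\mu_{N/q}$ appearing in symmetrization sums and powers of the augmentation ideal $I_G\subset\mathbb{F}_p[G]$; Lemmas \ref{lem:R_apply}, \ref{lem:L_apply}, \ref{lem:S_apply} show each of the elementary pieces $\tilde L_i$, $\tilde R_i$, $S$ strictly raises $J$ in lexicographic order on this index set. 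That filtration device, and the mod-$p$ reduction that makes it available, are the actual technical content of the proof and are not present in your proposal.
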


More precisely, we prove the following:
\begin{thm}
\label{thm:main-kd-basis}Let $N=qp^{M}$ with $p\in\{2,3\}$, $q=6-p$,
and $M\geq0$. Fix an $N$-th primitive root $\zeta_{N}$ of unity.
Put
\[
\nu_{N}=\{\zeta_{N}^{m}\mid m\equiv1\pmod{q}\}.
\]
Then, for $k\geq d\geq0$, the $\mathbb{Q}$-linear basis of $\mathrm{gr}_{d}^{C}\mathcal{A}_{k}$
is given by
\[
\left\{ I^{\mathfrak{C}}(0;\epsilon_{1},\{0\}^{l_{1}},\dots,\epsilon_{d},\{0\}^{l_{d}};1)\left|\begin{gathered}\epsilon_{1},\dots,\epsilon_{d}\in\nu_{N},\\
l_{1}+\cdots+l_{d}=k-d
\end{gathered}
\right.\right\} .
\]
\end{thm}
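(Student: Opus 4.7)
The plan is to establish the theorem in three stages: matching dimensions, reducing to depth one via the coproduct, and a direct analysis of the depth-one case.

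\emph{Stage 1 (dimension matching).} Since the category $\mathcal{MT}(\mathbb{Z}[\mu_N,\frac{1}{N}])$ has vanishing higher $\mathrm{Ext}$'s, $\mathcal{A}$ is cofree as a graded connected Hopf algebra, so $\dim_\mathbb{Q}\mathrm{gr}_d^C\mathcal{A}_k=\sum_{k_1+\cdots+k_d=k,\,k_i\geq 1}\prod_{i=1}^d\dim_\mathbb{Q}\mathrm{gr}_1^C\mathcal{A}_{k_i}$. Borel's theorem for the higher $K$-theory of $\mathbb{Q}(\zeta_N)$, combined with a Dirichlet-type computation in weight one exploiting the total ramification of $p$ in $\mathbb{Q}(\zeta_N)$ (so the unique prime above $p$ contributes the $S$-unit making up the missing summand), yields $\dim_\mathbb{Q}\mathrm{gr}_1^C\mathcal{A}_k=\phi(N)/2=|\nu_N|$ for every $k\geq 1$. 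Hence $\dim_\mathbb{Q}\mathrm{gr}_d^C\mathcal{A}_k=|\nu_N|^d\binom{k-1}{d-1}$, matching the cardinality of the proposed set; it therefore suffices to prove linear independence.

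\emph{Stage 2 (reduction to depth one).} I would induct on $d$ via the coproduct isomorphism $\mathcal{D}:\mathrm{gr}_d^C\mathcal{A}\xrightarrow{\sim}\mathrm{gr}_1^C\mathcal{A}\otimes\mathrm{gr}_{d-1}^C\mathcal{A}$. Applying Goncharov's coaction formula to $I^{\mathfrak{a}}(0;\epsilon_1,\{0\}^{l_1},\dots,\epsilon_d,\{0\}^{l_d};1)$ and reducing modulo $C_{d-1}$ produces a ``leading'' tensor of the form $I^{\mathfrak{C}}(0;\epsilon_1,\{0\}^{l_1};1)\otimes I^{\mathfrak{C}}(0;\mathrm{tail};1)$, where the tail is a depth-$(d{-}1)$ basis element built from $(\epsilon_2,\dots,\epsilon_d)$ (up to rescaling of the arguments), together with correction terms strictly lower in a suitable lexicographic order on $(l_1,\dots,l_d)$. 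A triangular matrix argument combined with the induction hypothesis reduces the claim to the case $d=1$.

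\emph{Stage 3 (depth one), and the main obstacle.} The crux is to show that $\{I^{\mathfrak{C}}(0;\epsilon,\{0\}^{k-1};1):\epsilon\in\nu_N\}$ spans $\mathrm{gr}_1^C\mathcal{A}_k$; linear independence is then automatic from Stage 1. My plan is to combine the motivic distribution relations with the motivic inversion relation (modulo products and $(2\pi i)^{\mathfrak{m}}$) to express each $I^{\mathfrak{a}}(0;\zeta_N^m,\{0\}^{k-1};1)$, modulo lower coradical filtration, as a $\mathbb{Q}$-linear combination of the proposed generators. The choice $\nu_N=\{\zeta_N^m:m\equiv 1\pmod q\}$ is tailored to level-$q$ distribution for $q\in\{3,4\}$, and the prime-power tower $N=qp^M$ enables a descent from level $N$ down to level $N/p$, inducting on $M$ to Deligne's base cases $N\in\{3,4,8\}$. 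This is where the substantive work lies: Goncharov's failure of $P(N)$ for prime $N\geq 5$ shows that a single level of distribution-inversion relations is insufficient in general, so the tower structure is essential, and arranging the descent uniformly in $k$ and $M$ so that the output generators are exactly $\nu_N$ (rather than some less explicit system of representatives) appears to be the main novelty; cf.\ Remark~\ref{rem:caseN348} for the obstruction at $N\geq 9$.
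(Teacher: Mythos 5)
Your Stage 1 is fine (here $N$ is always a prime power, so the ``number of prime divisors'' term $\nu(N)-1$ in the dimension formula vanishes and $\dim_{\mathbb{Q}}\mathrm{gr}_1^C\mathcal{A}_k=\varphi(N)/2=|\nu_N|$ even at $k=1$), and the cofreeness reduction to the coradical-graded pieces is the right first move. But you have located the difficulty in the wrong place: Stage 3 is not where the work lies, and the ``triangular matrix argument'' of Stage 2 is precisely the substance of the theorem and, as you state it, is false. For depth one, Theorem~\ref{thm:DG-depth1} (Deligne--Goncharov) already presents $\mathrm{gr}_1^C\mathcal{A}$ by generators and relations for \emph{every} $N$; passing to the $\nu_N$-indexed basis is then the short counting argument of Lemma~\ref{lem:Ybasis}. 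No motivic distribution/inversion descent along the tower $N\to N/p\to\cdots$ is needed there, and ``arranging the descent so the output is exactly $\nu_N$'' is not the novelty of the paper.

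The genuine gap is Stage 2. Applying $\mathcal{D}$ via (\ref{eq:calcD}), the ``leading'' term has left factor $I^{\mathfrak{C}}(0;\epsilon_1/\epsilon_2,\{0\}^{l_1};1)$ with $\epsilon_1/\epsilon_2\in\mu_{N/q}$, \emph{not} in $\nu_N$; to compare with the claimed basis you must re-expand it in the $\nu_N$-basis via the inverse $\Theta:Y\to W$ of Lemma~\ref{lem:Theta_value}. The coefficients there are sums over the fibres $\Lambda_c(\epsilon_1/\epsilon_2)$ of the $p^{v}$-power map, and this mixing of roots of unity is not upper-triangular for any order on $(l_1,\dots,l_d)$ alone — indeed the $l$'s in the leading term are unchanged, so any ordering that works must involve the $\epsilon$'s. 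Section~4 exhibits an explicit weight-$2$, $N=9$ example where the unmodified ``$R$''-operator has a nonzero eigenvector (eigenvalue $2$ mod $3$), so the naive self-map is not nilpotent and the naive triangularity fails. Making a triangularity argument work is exactly what forces the machinery of Sections~3--5: the combinatorial model $W^{\otimes d}\xrightarrow{D_d}Y\otimes W^{\otimes(d-1)}$ composed with $\Theta$ into a self-map $E_d$ of $W^{\otimes d}$; reduction modulo $p$; replacing $\theta$ by the modified $\tilde\theta$ of Definition~\ref{def:tilde_theta}; and, most importantly, a finite filtration by subspaces $V_J\subset\mathcal{W}^{\otimes d}$ indexed by tuples $J\in\mathbb{Z}_{\geq 0}^{2d}$ that record both $U(n)$-averages and powers of the augmentation ideal of $\mathbb{F}_p[\mathrm{Gal}(\mathbb{Q}(\zeta_N)/\mathbb{Q}(\zeta_q))]$, together with a carefully crafted order on $\mathcal{J}$ under which $\mathcal{E}_d-\mathrm{id}$ is strictly filtration-increasing (Lemmas~\ref{lem:R_apply}--\ref{lem:E_J_J'}). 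Your proposal contains no analogue of any of this; asserting that ``a suitable lexicographic order'' exists is begging the question, and cf.\ Remark~\ref{rem:caseN348}: this is the exact point where the $N\geq 9$ cases diverge from Deligne's.
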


Finally, we discuss an application to the cyclotomic multiple zeta
values. For $N$-th roots of unity $\epsilon_{1},\dots,\epsilon_{d}$
and positive integers $k_{1},\dots,k_{d}$ with $(k_{d},\epsilon_{d})\neq(1,1)$,
cyclotomic multiple zeta values of level $N$ is defined by
\[
\zeta{k_{1},\dots,k_{d} \choose \epsilon_{1},\dots,\epsilon_{d}}=\sum_{0<m_{1}<\cdots<m_{d}}\frac{\epsilon_{1}^{m_{1}}\cdots\epsilon_{d}^{m_{d}}}{m_{1}^{k_{1}}\cdots m_{d}^{k_{d}}}\in\mathbb{C}.
\]
Here $k_{1}+\cdots+k_{d}$ is call weight and $d$ is called depth.
By applying the period map to Theorem \ref{thm:main-kd-basis}, we
obtain the following corollary.
\begin{cor}
\label{cor:multipleL}Let $N=qp^{M}$ with $p\in\{2,3\}$, $q=6-p$,
and $M\geq0$. Fix an $N$-th primitive root $\zeta_{N}$ of unity.
Then, for $k\geq d\geq0$, all the cyclotomic multiple zeta values
of level $N$, weight $k$ and depth $d$ can be expressed as a $\mathbb{Q}$-linear
sum of
\[
\left\{ (2\pi i)^{s}\zeta{k_{1},\dots,k_{e} \choose \epsilon_{1},\dots,\epsilon_{e-1},\epsilon_{e}\zeta_{N}^{-1}}\left|\begin{gathered}0\leq e\leq d,\ k_{1},\dots,k_{e}\geq1,\ s\geq0\\
k_{1}+\cdots+k_{e}+s=k\\
\epsilon_{1},\dots,\epsilon_{e}\in\mu_{N/q}
\end{gathered}
\right.\right\} .
\]
\end{cor}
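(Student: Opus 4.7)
The plan is to lift the statement to the motivic level, apply Theorem \ref{thm:main-kd-basis} together with the Hopf-algebraic structure of $\mathcal{H}$, and then push forward along the period map $\iota\colon\mathcal{H}\to\mathbb{C}$. The standard iterated-integral representation gives
\[
\zeta^{\mathfrak{m}}\binom{k_1,\dots,k_d}{\epsilon_1,\dots,\epsilon_d} = (-1)^{d}\, I^{\mathfrak{m}}\bigl(0;(\epsilon_1\cdots\epsilon_d)^{-1},\{0\}^{k_1-1},\dots,\epsilon_d^{-1},\{0\}^{k_d-1};1\bigr),
\]
which lies in $\mathcal{H}_k$ with $\mathcal{A}$-image in $C_d\mathcal{A}_k$ by Lemma \ref{lem:inCd}. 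The goal thus reduces to showing that every element of the preimage $C_d\mathcal{H}_k\subseteq\mathcal{H}_k$ of $C_d\mathcal{A}_k$ is a $\mathbb{Q}$-linear combination of
\[
\bigl((2\pi i)^{\mathfrak{m}}\bigr)^{s}\, I^{\mathfrak{m}}\bigl(0;\epsilon'_1,\{0\}^{l_1},\dots,\epsilon'_e,\{0\}^{l_e};1\bigr), \qquad 0\leq e\leq d,\ \epsilon'_j\in\nu_N,\ s+e+\textstyle\sum_i l_i = k.
\]

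I would establish this claim by double induction on the weight $k$ and the coradical depth $d$. Theorem \ref{thm:main-kd-basis} provides a $\mathbb{Q}$-basis of $\mathrm{gr}_d^C\mathcal{A}_k$ whose $I^{\mathfrak{a}}$-classes are of the desired form. Subtracting suitable motivic $I^{\mathfrak{m}}$-lifts of these basis elements from the given element leaves a residue in $C_{d-1}\mathcal{H}_k + (2\pi i)^{\mathfrak{m}}\mathcal{H}_{k-1}$. The $C_{d-1}\mathcal{H}_k$ part is absorbed by the depth induction. For the $(2\pi i)^{\mathfrak{m}}\mathcal{H}_{k-1}$ part, the key observation is that $(2\pi i)^{\mathfrak{m}}$ is fixed by the motivic Galois coaction, hence lies in $C_0\mathcal{H}$, so multiplication by $(2\pi i)^{\mathfrak{m}}$ preserves the coradical filtration; consequently the residue factors as $(2\pi i)^{\mathfrak{m}}\cdot R$ with $R\in C_d\mathcal{H}_{k-1}$, and the weight induction applies.

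Granted the motivic spanning, the rest is a direct computation. Each spanning element $I^{\mathfrak{m}}(0;\epsilon'_1,\{0\}^{l_1},\dots,\epsilon'_e,\{0\}^{l_e};1)$ with $\epsilon'_j\in\nu_N$ converts, by inverting the iterated-integral representation above, into a motivic CMZV whose lower entries are $\epsilon_j=(\epsilon'_j)^{-1}\epsilon'_{j+1}$ for $j<e$ and $\epsilon_e=(\epsilon'_e)^{-1}$. Writing $\epsilon'_j=\zeta_N^{m_j}$ with $m_j\equiv 1\pmod{q}$, the ratios satisfy $\epsilon_j\in\mu_{N/q}$ for $j<e$, while $\epsilon_e\in\zeta_N^{-1}\mu_{N/q}$, exactly matching the form $\epsilon_e\zeta_N^{-1}$ with $\epsilon_e\in\mu_{N/q}$ in the statement. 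Applying $\iota$ then yields the decomposition for complex CMZVs.

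The main obstacle is the first step, and more precisely the depth-preservation claim $(2\pi i)^{\mathfrak{m}}\mathcal{H}_{k-1}\cap C_d\mathcal{H}_k = (2\pi i)^{\mathfrak{m}}\cdot C_d\mathcal{H}_{k-1}$: while Theorem \ref{thm:main-kd-basis} supplies depth control modulo $(2\pi i)^{\mathfrak{m}}$, ensuring that the bound $e\leq d$ survives the $(2\pi i)^{\mathfrak{m}}$-power correction terms is precisely where the coradical filtration on $\mathcal{H}$ must interact cleanly with multiplication by $(2\pi i)^{\mathfrak{m}}$. Once this technical point is in place, the conversion between iterated integrals and CMZVs and the application of the period map are routine.
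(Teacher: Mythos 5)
Your high-level strategy — lift to the motivic level, invoke Theorem~\ref{thm:main-kd-basis} on each coradical graded piece, induct downward, and finally apply $\iota$ — is the intended one, and your dictionary between iterated integrals with entries in $\nu_N$ and CMZVs with lower entries $\epsilon_1,\dots,\epsilon_{e-1}\in\mu_{N/q}$, $\epsilon_e\in\zeta_N^{-1}\mu_{N/q}$ is correct. But there is a genuine gap at the very first step, and it stems from your definition of $C_d\mathcal{H}_k$.

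You set $C_d\mathcal{H}_k$ to be the \emph{preimage} of $C_d\mathcal{A}_k$ under $\mathcal{H}_k\twoheadrightarrow\mathcal{A}_k$, and then claim the corollary reduces to showing this preimage is spanned by the displayed set. With that definition the reduced claim is simply false: for $d=0$ and $k\ge2$ the preimage of $C_0\mathcal{A}_k=\{0\}$ is the full hyperplane $(2\pi i)^{\mathfrak{m}}\mathcal{H}_{k-1}$, which has dimension $\ge\dim\mathcal{A}_1+1\ge2$ for $N\ge3$, while the only admissible spanning element with $e\le0$ is $\bigl((2\pi i)^{\mathfrak{m}}\bigr)^k$. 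The same definition also trivializes and falsifies the ``depth-preservation'' identity you single out as the key technical point: since $(2\pi i)^{\mathfrak{m}}\mathcal{H}_{k-1}=\ker(\mathcal{H}_k\to\mathcal{A}_k)\subset C_d\mathcal{H}_k$ for \emph{every} $d\ge-1$, the left-hand side of $(2\pi i)^{\mathfrak{m}}\mathcal{H}_{k-1}\cap C_d\mathcal{H}_k=(2\pi i)^{\mathfrak{m}}C_d\mathcal{H}_{k-1}$ is always all of $(2\pi i)^{\mathfrak{m}}\mathcal{H}_{k-1}$, whereas the right-hand side is a proper subspace whenever $d<k-1$. So the inductive step cannot close with the filtration as you defined it.

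What the argument actually needs is the coradical filtration of $\mathcal{H}$ regarded as an $\mathcal{A}$\nobreakdash-comodule (equivalently, the filtration transported from $\mathcal{A}$ via the non-canonical but weight-graded splitting $\mathcal{H}\simeq\mathcal{A}\otimes\mathbb{Q}[(2\pi i)^{\mathfrak{m}}]$), under which $(2\pi i)^{\mathfrak{m}}$ sits in $C_0\mathcal{H}$, the filtration is multiplicative, and $\mathrm{gr}_d^C\mathcal{H}_k\simeq\bigoplus_{s\ge0}\mathrm{gr}_d^C\mathcal{A}_{k-s}\cdot\bigl((2\pi i)^{\mathfrak{m}}\bigr)^s$. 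You must also upgrade Lemma~\ref{lem:inCd} from $I^{\mathfrak{a}}\in C_d\mathcal{A}$ to $I^{\mathfrak{m}}\in C_d\mathcal{H}$ for this finer filtration, using Goncharov's coaction $\Delta\colon\mathcal{H}\to\mathcal{A}\otimes\mathcal{H}$ in place of the coproduct on $\mathcal{A}$; a priori ``$\mathcal{A}$-image in $C_d\mathcal{A}_k$'' only places the motivic CMZV in the (too large) preimage. Once these two points are in place, a single downward induction on $d$ suffices: Theorem~\ref{thm:main-kd-basis} applied to each summand $\mathrm{gr}_d^C\mathcal{A}_{k-s}$ exhausts the top graded piece, and the residue falls into $C_{d-1}\mathcal{H}_k$. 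Your auxiliary weight induction and the intersection lemma then become unnecessary, as does the ambiguous decomposition of the residue as ``$C_{d-1}\mathcal{H}_k+(2\pi i)^{\mathfrak{m}}\mathcal{H}_{k-1}$''.
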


\begin{rem}
The converse of (\ref{eq:PNK_PN}) is not necessarily true. For example,
it is known that $P(1,12,2)$ does not hold, which is closely related
to the existence of a weight $12$ cusp form for $SL(2,\mathbb{Z})$
(see \cite{Bro_dep,GKZ}).
\end{rem}

\subsection*{Acknowledgements}

The author would like to thank Kenji Sakugawa, Koji Tasaka, and Nobuo
Sato for useful comments and advices. This work was supported by JSPS
KAKENHI Grant Numbers JP18K13392 and JP22K03244.

\section{Preliminaries}

In this section, we calculate the coproduct of coradical graded motivic
iterated integrals using the following Goncharov's coproduct formula. 
\begin{thm}[Goncharov, \cite{GonSym}]
For a number field $K$ and $a_{0},\dots,a_{k+1}\in K$, we have
\begin{align}
\Delta I^{\mathfrak{a}}(a_{0};a_{1},\dots,a_{k};a_{k+1}) & =\sum_{l=0}^{k}\sum_{0=i_{0}<i_{1}<\cdots<i_{l}<i_{l+1}=k+1}\prod_{j=0}^{l}I^{\mathfrak{a}}(a_{i_{j}};a_{i_{j}+1},\dots,a_{i_{j+1}-1};a_{i_{j+1}})\nonumber \\
 & \qquad\otimes I^{\mathfrak{a}}(a_{0};a_{i_{1}},\dots,a_{i_{l}};a_{k+1}).\label{eq:GonCop}
\end{align}
\end{thm}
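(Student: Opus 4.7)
The plan is to derive the coproduct from the Hopf-algebra structure on the coordinate ring of the motivic de Rham torsor of paths on $X=\mathbb{P}^{1}\setminus\{a_{1},\dots,a_{k},\infty\}$. I would first identify $I^{\mathfrak{a}}(a_{0};a_{1},\dots,a_{k};a_{k+1})$ as a matrix coefficient of the motivic path from the tangential base point at $a_{0}$ to that at $a_{k+1}$, paired against the tensor $\omega_{a_{1}}\otimes\cdots\otimes\omega_{a_{k}}$ with $\omega_{a}=dt/(t-a)$. Via Chen's theory of iterated integrals, the de Rham fibre of this torsor is the shuffle Hopf algebra on words in the $\omega_{a}$, and the Hopf-algebra structure on $\mathcal{A}$ restricts to the usual shuffle product together with the coproduct dual to path composition.

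Next, I would compute the coproduct by a path-splitting argument. For any index sequence $0=i_{0}<i_{1}<\cdots<i_{l}<i_{l+1}=k+1$, the path from $a_{0}$ to $a_{k+1}$ factors through tangential base points placed successively at $a_{i_{1}},\dots,a_{i_{l}}$. Groupoid compatibility of the motivic torsor converts this factorization into a tensor decomposition in $\mathcal{A}\otimes\mathcal{A}$: the left tensor factor $\prod_{j=0}^{l}I^{\mathfrak{a}}(a_{i_{j}};a_{i_{j}+1},\dots,a_{i_{j+1}-1};a_{i_{j+1}})$ records the iterated integrals on each subsegment, while the right tensor factor $I^{\mathfrak{a}}(a_{0};a_{i_{1}},\dots,a_{i_{l}};a_{k+1})$ records the "coarse" integral that sees only the intermediate punctures, reflecting how the motivic Galois group acts on the fibre. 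Summing over all such sequences yields the full coproduct because, combinatorially, this enumeration is equivalent to deconcatenating the word $(\omega_{a_{1}},\dots,\omega_{a_{k}})$ into subwords interlaced with singled-out letters.

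The principal difficulty, which is the technical heart of Goncharov's argument, is justifying that no regularization corrections appear at the tangential base points, particularly in the degenerate cases where $a_{0}=a_{1}$, $a_{k}=a_{k+1}$, or where consecutive intermediate points $a_{i_{j}}=a_{i_{j}+1}$ coincide. This is handled by Deligne's formalism of tangential base points, which extends the torsor of paths to the boundary strata and ensures that the renormalized iterated integrals behave functorially under composition. As a sanity check, I would verify the formula directly in weights $k=1,2$ and confirm compatibility with the classical coproduct formulas for multiple polylogarithms under the period map $\iota$.
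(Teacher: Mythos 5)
The paper gives no proof of this theorem: it is cited from Goncharov and used as a black box throughout, so there is no in-paper argument to compare against. Measured against the standard proofs in the literature, your sketch sets up the right Tannakian apparatus (de Rham torsor of paths, tangential base points, shuffle algebra as the de Rham fibre), but the mechanism you invoke for the coproduct is not the one that produces Goncharov's formula, and your first and second paragraphs are internally inconsistent. You assert that the Hopf structure on $\mathcal{A}$ is ``dual to path composition.'' Path composition in the fundamental groupoid dualizes to the identity
\[
I^{\mathfrak{a}}(a_{0};a_{1},\dots,a_{k};a_{k+1})=\sum_{i=0}^{k}I^{\mathfrak{a}}(a_{0};a_{1},\dots,a_{i};x)\,I^{\mathfrak{a}}(x;a_{i+1},\dots,a_{k};a_{k+1}),
\]
a single sum landing in $\mathcal{A}$ --- this is exactly the ``path composition formula'' the paper invokes in the proof of Lemma \ref{lem:calc_Di_depth} --- and not a tensor decomposition in $\mathcal{A}\otimes\mathcal{A}$.

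Goncharov's $\Delta$ dualizes a different map: the action of the prounipotent motivic Galois group $U=\operatorname{Spec}\mathcal{A}$ on the de Rham torsor of paths. A point $g\in U$ transports the canonical de Rham path ${}_{a_{k+1}}p_{a_{0}}$ to $g_{a_{k+1}}\cdot{}_{a_{k+1}}p_{a_{0}}\cdot g_{a_{0}}^{-1}$, where $g_{x}$ denotes the image of $g$ in the local de Rham fundamental group at $x$. Expanding this action in the completed group algebra and extracting the coefficient of the word $\omega_{a_{1}}\cdots\omega_{a_{k}}$ is what generates the sum over index sequences $0<i_{1}<\cdots<i_{l}<k+1$: the left tensor slot collects the coefficients of the group elements inserted at the marked positions, and the right slot records the ``skeleton word'' $I^{\mathfrak{a}}(a_{0};a_{i_{1}},\dots,a_{i_{l}};a_{k+1})$ that remains. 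That factor in the right slot corresponds to no segment of a path, so carrying out your ``path-splitting through tangential base points at $a_{i_{1}},\dots,a_{i_{l}}$'' literally would produce a product of sub-integrals in $\mathcal{A}$ and no principled way to populate the second tensor factor; the argument as sketched does not close. Regularization at the tangential base points, which you flag as the ``principal difficulty,'' is real but secondary: the essential gap is that you have misidentified the map whose dual is $\Delta$, conflating the groupoid structure of the path torsor with the Galois action on it.
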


\begin{example}
When $k=2$, 
\begin{align*}
\Delta I^{\mathfrak{a}}(a_{0};a_{1},a_{2};a_{3}) & =I^{\mathfrak{a}}(a_{0};a_{1},a_{2};a_{3})\otimes I^{\mathfrak{a}}(a_{0};a_{3})\\
 & \quad+I^{\mathfrak{a}}(a_{0};a_{1})I^{\mathfrak{a}}(a_{1};a_{2};a_{3})\otimes I^{\mathfrak{a}}(a_{0};a_{1};a_{3})+I^{\mathfrak{a}}(a_{0};a_{1};a_{2})I^{\mathfrak{a}}(a_{2};a_{3})\otimes I^{\mathfrak{a}}(a_{0};a_{2};a_{3})\\
 & \quad+I^{\mathfrak{a}}(a_{0};a_{1})I^{\mathfrak{a}}(a_{1};a_{2})I^{\mathfrak{a}}(a_{2};a_{3})\otimes I^{\mathfrak{a}}(a_{0};a_{1},a_{2};a_{3})\\
 & =I^{\mathfrak{a}}(a_{0};a_{1},a_{2};a_{3})\otimes1+I^{\mathfrak{a}}(a_{1};a_{2};a_{3})\otimes I^{\mathfrak{a}}(a_{0};a_{1};a_{3})\\
 & \quad+I^{\mathfrak{a}}(a_{0};a_{1};a_{2})\otimes I^{\mathfrak{a}}(a_{0};a_{2};a_{3})+1\otimes I^{\mathfrak{a}}(a_{0};a_{1},a_{2};a_{3}).
\end{align*}
\end{example}

\begin{lem}
Let $N\in\mathbb{Z}_{\geq1}$, $\mathcal{H}=\mathcal{H}(\mathbb{Q}(\zeta_{N}),\Gamma_{N})$,
and $\mathcal{A}=\mathcal{H}/(2\pi i)^{\mathfrak{m}}\mathcal{H}$.
For $e\geq0$, let $\mathfrak{D}_{e}$ be the subspace of $\mathcal{A}$
spanned by mod $(2\pi i)^{\mathfrak{m}}$ motivic iterated integral
of depth $\leq e$, i.e.,
\[
\mathfrak{D}_{e}:={\rm span}_{\mathbb{Q}}\{I^{\mathfrak{a}}(a_{0};a_{1},\dots,a_{k};a_{k+1})\,\mid\,a_{0},\dots,a_{k+1}\in\{0\}\cup\mu_{N},\ \#\{1\leq j\leq k:a_{j}\neq0\}\leq e\}.
\]
Then, for $a_{0},\dots,a_{k+1}\in\{0\}\cup\mu_{N}$ with $d=\#\{j\,\mid\,1\leq j\leq k,\ a_{j}\neq0\}$,
we have

\begin{equation}
\Delta I^{\mathfrak{a}}(a_{0};a_{1},\dots,a_{k};a_{k+1})\equiv1\otimes I^{\mathfrak{a}}(a_{0};a_{1},\dots,a_{k};a_{k+1})\pmod{\mathcal{A}\otimes\mathfrak{D}_{d-1}}\label{eq:DI_1}
\end{equation}
and

\begin{align}
\Delta I^{\mathfrak{a}}(a_{0};a_{1},\dots,a_{k};a_{k+1}) & \equiv1\otimes I^{\mathfrak{a}}(a_{0};a_{1},\dots,a_{k};a_{k+1})\nonumber \\
 & \quad+\sum_{\substack{0\leq s<t\leq k+1\\
\#\{j\,\mid\,s<j<t,\ a_{j}\neq0\}=1\\
\{a_{s},a_{t}\}\neq\{0\}
}
}I^{\mathfrak{a}}(a_{s};a_{s+1},\dots,a_{t-1};a_{t})\nonumber \\
 & \qquad\qquad\otimes I^{\mathfrak{a}}(a_{0};a_{1},\dots a_{s-1},a_{s},a_{t},a_{t+1},\dots,a_{k};a_{k+1})\nonumber \\
 & \qquad\qquad\qquad\qquad\qquad\qquad\pmod{\mathcal{A}\otimes\mathfrak{D}_{d-2}}.\label{eq:DI_2}
\end{align}
\end{lem}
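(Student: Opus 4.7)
The plan is to derive both congruences directly from Goncharov's formula \eqref{eq:GonCop} by classifying its terms according to the selected set $S = \{i_1, \dots, i_l\} \subset \{1, \dots, k\}$ and retaining only those that are not already killed by the relevant ideal $\mathcal{A} \otimes \mathfrak{D}_{d-1}$ (respectively $\mathcal{A} \otimes \mathfrak{D}_{d-2}$). A given term factors as $L_S \otimes R_S$ with $L_S = \prod_{j=0}^{l} I^{\mathfrak{a}}(a_{i_j}; a_{i_j+1}, \dots, a_{i_{j+1}-1}; a_{i_{j+1}})$ and $R_S = I^{\mathfrak{a}}(a_0; a_{i_1}, \dots, a_{i_l}; a_{k+1})$, and the depth of $R_S$ equals the number of non-zero positions selected by $S$. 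In particular, $R_S$ lies in $\mathfrak{D}_{d-1}$ whenever $S$ omits at least one of the $d$ non-zero positions.

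The key preliminary is the following vanishing in $\mathcal{A}$: for any $\alpha, \beta \in \{0\} \cup \mu_N$ and any $m \geq 1$, $I^{\mathfrak{a}}(\alpha; 0^m; \beta) = 0$. Indeed, this iterated integral is motivically $(\log^{\mathfrak{m}}(\beta) - \log^{\mathfrak{m}}(\alpha))^m / m!$ under the tangential-basepoint regularization $\log^{\mathfrak{m}}(0) := 0$, and $\log^{\mathfrak{m}}(\zeta) \in \mathbb{Q}\cdot (2\pi i)^{\mathfrak{m}}$ for $\zeta \in \mu_N$ makes both logarithms vanish in $\mathcal{A}$. One also uses $I^{\mathfrak{a}}(\alpha; b_1, \dots, b_r; \alpha) = 0$ for $r \geq 1$, which follows from a constant-path representative whose integration domain is empty.

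For the first congruence, only $S$'s containing every non-zero position can contribute modulo $\mathcal{A} \otimes \mathfrak{D}_{d-1}$. Under that hypothesis every unselected position is $0$, so each non-empty sub-integral appearing in $L_S$ is of the form $I^{\mathfrak{a}}(\cdot; 0^m; \cdot)$ with $m \geq 1$ and vanishes by the preliminary identity. Hence only $S = \{1, \dots, k\}$ survives, for which $L_S = 1$ and $R_S = I^{\mathfrak{a}}(a_0; a_1, \dots, a_k; a_{k+1})$, yielding the claimed $1 \otimes I^{\mathfrak{a}}(a_0; a_1, \dots, a_k; a_{k+1})$.

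The second congruence additionally allows $S$ to omit exactly one non-zero position $a_{j^*}$, so that $R_S$ has depth $d-1$. Non-vanishing of $L_S$ now forces every sub-interval of $S$ either to be empty or to contain $j^*$, so $S$ has a unique non-trivial gap $(s, t)$ with $s < j^* < t$ and $a_{j^*}$ the unique non-zero in $(s, t)$; reading off $L_S$ and $R_S$ then reproduces the summand stated in the lemma. The restriction $\{a_s, a_t\} \neq \{0\}$ excludes the trivially vanishing left factor $I^{\mathfrak{a}}(0; \dots; 0) = 0$. The main technical care is to check that the preliminary vanishing holds uniformly across the boundary cases $s = 0$ or $t = k+1$, where the endpoints are among $a_0, a_{k+1}$ and might equal $0$; this is exactly what the tangential-basepoint convention $\log^{\mathfrak{m}}(0) = 0$ supplies, and with it in hand, the rest of the proof is bookkeeping in Goncharov's formula.
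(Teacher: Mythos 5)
Your proof follows exactly the same strategy as the paper's: plug in Goncharov's coproduct formula, classify the terms by the selected subset $S=\{i_1,\dots,i_l\}\subset\{1,\dots,k\}$, observe that the right factor lies in $\mathfrak{D}_e$ where $e$ is the number of nonzero letters selected by $S$, and then use the vanishing $I^{\mathfrak{a}}(\alpha;\{0\}^m;\beta)=0$ in $\mathcal{A}$ to kill every gap that omits all nonzero positions. The only cosmetic difference is how you justify that vanishing (via $\log^{\mathfrak{m}}$ of roots of unity landing in $\mathbb{Q}\cdot(2\pi i)^{\mathfrak{m}}$, whereas the paper simply invokes the shuffle relation to write $I^{\mathfrak{a}}(a;\{0\}^m;b)=I^{\mathfrak{a}}(a;0;b)^m/m!=0$), so this is the paper's argument in slightly different clothing.
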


\begin{proof}
Let us see each term of the right hand side of (\ref{eq:GonCop})
\begin{equation}
\sum_{l=0}^{k}\sum_{0=i_{0}<i_{1}<\cdots<i_{l}<i_{l+1}=k+1}\prod_{j=0}^{l}I^{\mathfrak{a}}(a_{i_{j}};a_{i_{j}+1},\dots,a_{i_{j+1}-1};a_{i_{j+1}})\otimes I^{\mathfrak{a}}(a_{0};a_{i_{1}},\dots,a_{i_{l}};a_{k+1}).\label{eq:GonCop2}
\end{equation}
Fix $i_{1},\dots,i_{l}$ in (\ref{eq:GonCop2}). Note that
\[
\{1,\dots,k\}=X_{0}\sqcup\cdots\sqcup X_{l}\sqcup Y
\]
where 
\[
X_{j}:=\{i_{j}+1,\dots,i_{j+1}-1\},\qquad Y:=\{i_{1},\dots,i_{l}\}.
\]
Thus,
\[
d=d_{X_{0}}+\cdots+d_{X_{l}}+d_{Y}
\]
where $d_{Z}=\#\{i\mid i\in Z,a_{i}\neq0\}$. Then, since $I^{\mathfrak{a}}(a;\{0\}^{n};b)=I^{\mathfrak{a}}(a;0;b)^{n}/n!=0^{n}/n!=0$
for $a,b\in\{0\}\cup\mu_{N}$ and $n\geq1$, we have 
\begin{equation}
(d_{X_{j}}=0\text{ and }X_{j}\neq\emptyset)\Rightarrow I^{\mathfrak{a}}(a_{i_{j}};a_{i_{j}+1},\dots,a_{i_{j+1}-1};a_{i_{j+1}})=0.\label{eq:dX_I}
\end{equation}
Furthermore, by definition, we have
\begin{equation}
I^{\mathfrak{a}}(a_{0};a_{i_{1}},\dots,a_{i_{l}};a_{k+1})\in\mathfrak{D}_{d_{Y}}.\label{eq:dY_I}
\end{equation}
By (\ref{eq:dY_I}), we have
\begin{multline*}
\Delta I^{\mathfrak{a}}(a_{0};a_{1},\dots,a_{k};a_{k+1})\equiv\\
\sum_{l=0}^{k}\sum_{\substack{0=i_{0}<i_{1}<\cdots<i_{l}<i_{l+1}=k+1\\
d_{Y}=d
}
}\prod_{j=0}^{l}I^{\mathfrak{a}}(a_{i_{j}};a_{i_{j}+1},\dots,a_{i_{j+1}-1};a_{i_{j+1}})\otimes I^{\mathfrak{a}}(a_{0};a_{i_{1}},\dots,a_{i_{l}};a_{k+1})\pmod{\mathcal{A}\otimes\mathfrak{D}_{d-1}}.
\end{multline*}
Here, $d_{Y}=d$ implies $d_{X_{1}}=\cdots=d_{X_{l}}=0$. Thus, by
(\ref{eq:dX_I}), the factor $\prod_{j=0}^{l}I^{\mathfrak{a}}(\dots)$
in the right hand side vanishes except for the case $Y=\{1,\dots,k\}$.
Hence,
\begin{align*}
 & \sum_{l=0}^{k}\sum_{\substack{0=i_{0}<i_{1}<\cdots<i_{l}<i_{l+1}=k+1\\
d_{Y}=d
}
}\prod_{j=0}^{l}I^{\mathfrak{a}}(a_{i_{j}};a_{i_{j}+1},\dots,a_{i_{j+1}-1};a_{i_{j+1}})\otimes I^{\mathfrak{a}}(a_{0};a_{i_{1}},\dots,a_{i_{l}};a_{k+1})\\
 & =1\otimes I^{\mathfrak{a}}(a_{0};a_{1},\dots,a_{k};a_{k+1}),
\end{align*}
which completes the proof of (\ref{eq:DI_1}).

Similarly, by (\ref{eq:dY_I}), we have
\begin{align*}
 & I^{\mathfrak{a}}(a_{0};a_{1},\dots,a_{k};a_{k+1})\\
 & \equiv\sum_{l=0}^{k}\sum_{\substack{0=i_{0}<i_{1}<\cdots<i_{l}<i_{l+1}=k+1\\
d_{Y}=d
}
}\prod_{j=0}^{l}I^{\mathfrak{a}}(a_{i_{j}};a_{i_{j}+1},\dots,a_{i_{j+1}-1};a_{i_{j+1}})\otimes I^{\mathfrak{a}}(a_{0};a_{i_{1}},\dots,a_{i_{l}};a_{k+1})\\
 & \quad+\sum_{l=0}^{k}\sum_{\substack{0=i_{0}<i_{1}<\cdots<i_{l}<i_{l+1}=k+1\\
d_{Y}=d-1
}
}\prod_{j=0}^{l}I^{\mathfrak{a}}(a_{i_{j}};a_{i_{j}+1},\dots,a_{i_{j+1}-1};a_{i_{j+1}})\otimes I^{\mathfrak{a}}(a_{0};a_{i_{1}},\dots,a_{i_{l}};a_{k+1})\\
 & =1\otimes I^{\mathfrak{a}}(a_{0};a_{1},\dots,a_{k};a_{k+1})\\
 & \quad+\sum_{l=0}^{k}\sum_{p=0}^{l}\sum_{\substack{0=i_{0}<i_{1}<\cdots<i_{l}<i_{l+1}=k+1\\
d_{X_{p}}=1,\ d_{X_{j}}=0\,(j\neq p)
}
}\prod_{j=0}^{l}I^{\mathfrak{a}}(a_{i_{j}};a_{i_{j}+1},\dots,a_{i_{j+1}-1};a_{i_{j+1}})\otimes I^{\mathfrak{a}}(a_{0};a_{i_{1}},\dots,a_{i_{l}};a_{k+1})
\end{align*}
module $\mathcal{A}\otimes\mathfrak{D}_{d-2}$. Then, by (\ref{eq:dX_I}),
we have
\begin{align*}
 & \sum_{l=0}^{k}\sum_{p=0}^{l}\sum_{\substack{0=i_{0}<i_{1}<\cdots<i_{l}<i_{l+1}=k+1\\
d_{X_{p}}=1,\ d_{X_{j}}=0\,(j\neq p)
}
}\prod_{j=0}^{l}I^{\mathfrak{a}}(a_{i_{j}};a_{i_{j}+1},\dots,a_{i_{j+1}-1};a_{i_{j+1}})\otimes I^{\mathfrak{a}}(a_{0};a_{i_{1}},\dots,a_{i_{l}};a_{k+1})\\
 & =\sum_{l=0}^{k}\sum_{p=0}^{l}\sum_{\substack{0=i_{0}<i_{1}<\cdots<i_{l}<i_{l+1}=k+1\\
(i_{1},\dots,i_{p})=(1,\dots,p)\\
(i_{p+1},\dots,i_{l})=(k-l+p+1,\dots,k)\\
\#\{j\,\mid\,i_{p}<j<i_{p+1},\ a_{j}\neq0\}=1
}
}\prod_{j=0}^{l}I^{\mathfrak{a}}(a_{i_{j}};a_{i_{j}+1},\dots,a_{i_{j+1}-1};a_{i_{j+1}})\otimes I^{\mathfrak{a}}(a_{0};a_{i_{1}},\dots,a_{i_{l}};a_{k+1})\\
 & =\sum_{\substack{0\leq s<t\leq k+1\\
\#\{j\,\mid\,s<j<t,\ a_{j}\neq0\}=1
}
}I^{\mathfrak{a}}(a_{s};a_{s+1},\dots,a_{t-1};a_{t})\otimes I^{\mathfrak{a}}(a_{0};a_{1},\dots,a_{s},a_{t},\dots,a_{k};a_{k+1})\qquad(s:=i_{p},\ t:=i_{p+1}).
\end{align*}
Finally, since $I^{\mathfrak{a}}(a_{s};a_{s+1},\dots,a_{t-1};a_{t})$
vanishes when $a_{s}=a_{t}=0$, we have (\ref{eq:DI_2}).
\end{proof}
\begin{lem}
\label{lem:inCd}Let $N\in\mathbb{Z}_{\geq1}$, $\mathcal{H}=\mathcal{H}(\mathbb{Q}(\zeta_{N}),\Gamma_{N})$,
$\mathcal{A}=\mathcal{H}/(2\pi i)^{\mathfrak{m}}\mathcal{H}$, and
$(C_{d}\mathcal{A})_{d}$ the coradical filtration of $\mathcal{A}$.
Then, for $a_{0},\dots,a_{k+1}\in\{0\}\cup\mu_{N}$ with $d=\#\{j\,\mid\,1\leq j\leq k,\ a_{j}\neq0\}$,
we have

\begin{equation}
I^{\mathfrak{a}}(a_{0};a_{1},\dots,a_{k};a_{k+1})\in C_{d}\mathcal{A}\label{eq:DI_1C}
\end{equation}
and

\begin{align}
\Delta I^{\mathfrak{a}}(a_{0};a_{1},\dots,a_{k};a_{k+1}) & \equiv1\otimes I^{\mathfrak{a}}(a_{0};a_{1},\dots,a_{k};a_{k+1})\nonumber \\
 & \quad+\sum_{\substack{0\leq s<t\leq k+1\\
\#\{j\,\mid\,s<j<t,\ a_{j}\neq0\}=1\\
\{a_{s},a_{t}\}\neq\{0\}
}
}I^{\mathfrak{a}}(a_{s};a_{s+1},\dots,a_{t-1};a_{t})\nonumber \\
 & \qquad\qquad\otimes I(a_{0};a_{1},\dots a_{s-1},a_{s},a_{t},a_{t+1},\dots,a_{k};a_{k+1})\nonumber \\
 & \qquad\qquad\qquad\qquad\qquad\qquad\pmod{\mathcal{A}\otimes C_{d-2}\mathcal{A}}.\label{eq:DI_2C}
\end{align}
\end{lem}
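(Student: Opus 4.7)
The plan is to derive both statements from the previous lemma by proving, as an intermediate step, the set-theoretic inclusion $\mathfrak{D}_e\subseteq C_e\mathcal{A}$ for all $e\geq 0$. Once this is established, (\ref{eq:DI_1C}) is immediate from the definition of $\mathfrak{D}_d$, and (\ref{eq:DI_2C}) follows from (\ref{eq:DI_2}) by replacing the term ``$\mathcal{A}\otimes\mathfrak{D}_{d-2}$'' with the weaker ``$\mathcal{A}\otimes C_{d-2}\mathcal{A}$.''

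To prove $\mathfrak{D}_e\subseteq C_e\mathcal{A}$, I would induct on $e$. For the base case $e=0$, every generator $I^{\mathfrak{a}}(a_0;a_1,\dots,a_k;a_{k+1})$ of $\mathfrak{D}_0$ either has $k=0$ (so the integral equals $1\in\mathbb{Q}$) or has all $a_j=0$ for $1\le j\le k$; in the latter case the identity $I^{\mathfrak{a}}(a;\{0\}^n;b)=0$ (already used in the previous lemma) shows the integral vanishes. Hence $\mathfrak{D}_0\subseteq\mathbb{Q}=C_0\mathcal{A}$. For the inductive step, assume $\mathfrak{D}_{e-1}\subseteq C_{e-1}\mathcal{A}$, and let $I=I^{\mathfrak{a}}(a_0;a_1,\dots,a_k;a_{k+1})$ be a generator of $\mathfrak{D}_e$, so its depth $d$ is at most $e$. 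Applying (\ref{eq:DI_1}) and using $\mathfrak{D}_{d-1}\subseteq\mathfrak{D}_{e-1}\subseteq C_{e-1}\mathcal{A}$ by induction, we obtain
\[
\Delta I - 1\otimes I \;\in\; \mathcal{A}\otimes C_{e-1}\mathcal{A},
\]
which by the defining property of the coradical filtration (recalled in the footnote) means exactly $I\in C_e\mathcal{A}$. This closes the induction.

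With the inclusion $\mathfrak{D}_e\subseteq C_e\mathcal{A}$ in hand, (\ref{eq:DI_1C}) is just the $e=d$ case applied to the specific integral. For (\ref{eq:DI_2C}), I take (\ref{eq:DI_2}) from the previous lemma and observe that $\mathcal{A}\otimes\mathfrak{D}_{d-2}\subseteq\mathcal{A}\otimes C_{d-2}\mathcal{A}$, so the congruence is preserved. There is no real obstacle here; the only subtle point to watch is the base case, where one must remember that the purely ``zero'' iterated integrals vanish so that $\mathfrak{D}_0$ is truly just $\mathbb{Q}$. Everything else is a mechanical transfer from the filtration $\mathfrak{D}_\bullet$ to the coradical filtration $C_\bullet\mathcal{A}$ via the coproduct formula already proved.
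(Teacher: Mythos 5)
Your proof is correct and follows essentially the same route as the paper: induct so that $\mathfrak{D}_{e}\subseteq C_{e}\mathcal{A}$, using (\ref{eq:DI_1}) and the recursive definition of the coradical filtration, then transfer (\ref{eq:DI_2}) to (\ref{eq:DI_2C}) via the inclusion $\mathfrak{D}_{d-2}\subseteq C_{d-2}\mathcal{A}$. Phrasing the induction on the filtration index $e$ rather than on the depth $d$ of a fixed integral is just an equivalent packaging of the paper's argument, and your explicit handling of the base case (noting $\mathfrak{D}_{0}=\mathbb{Q}$ because the all-zero integrals vanish) is a detail the paper leaves implicit.
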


\begin{proof}
We first show (\ref{eq:DI_1C}) by induction on $d$. By (\ref{eq:DI_1})
and the induction hypothesis, we have
\[
\Delta I^{\mathfrak{a}}(a_{0};a_{1},\dots,a_{k};a_{k+1})\equiv1\otimes I^{\mathfrak{a}}(a_{0};a_{1},\dots,a_{k};a_{k+1})\pmod{\mathcal{A}\otimes C_{d-1}\mathcal{A}}.
\]
Thus, (\ref{eq:DI_1C}) follows from the definition of the coradical
filtration. Then (\ref{eq:DI_2C}) follows from (\ref{eq:DI_2}) and
(\ref{eq:DI_1C}).
\end{proof}
\begin{lem}
\label{lem:calc_Di_depth}Let $N\in\mathbb{Z}_{\geq1}$, $\mathcal{H}=\mathcal{H}(\mathbb{Q}(\zeta_{N}),\Gamma_{N})$,
$\mathcal{A}=\mathcal{H}/(2\pi i)^{\mathfrak{m}}\mathcal{H}$, and
$(C_{d}\mathcal{A})_{d}$ the coradical filtration of $\mathcal{A}$.
Then, for $d\geq0$, $\epsilon_{1},\dots,\epsilon_{d+1}\in\mu_{N}$,
and $l_{0},\dots,l_{d}\in\mathbb{Z}_{\geq0}$, we have
\begin{align*}
 & \Delta I^{\mathfrak{a}}(0;\{0\}^{l_{0}},\epsilon_{1},\{0\}^{l_{1}}\dots,\epsilon_{d},\{0\}^{l_{d}};\epsilon_{d+1})\\
 & \equiv1\otimes I^{\mathfrak{a}}(0;\{0\}^{l_{0}},\epsilon_{1},\{0\}^{l_{1}}\dots,\epsilon_{d},\{0\}^{l_{d}};\epsilon_{d+1})\\
 & \quad+\sum_{i=1}^{d}\sum_{r=l_{i}}^{l_{i-1}+l_{i}}(-1)^{r-l_{i}}{r \choose l_{i}}I^{\mathfrak{a}}(0;\frac{\epsilon_{i}}{\epsilon_{i+1}},\{0\}^{r};1)\\
 & \qquad\otimes I^{\mathfrak{a}}(0;\{0\}^{l_{0}},\dots,\epsilon_{i-1},\{0\}^{l_{i-1}+l_{i}-r},\epsilon_{i+1},\dots,\{0\}^{l_{d}};\epsilon_{d+1})\\
 & \quad-\sum_{i=1}^{d-1}\sum_{r=l_{i}}^{l_{i}+l_{i+1}}(-1)^{l_{i}}{r \choose l_{i}}I^{\mathfrak{a}}(0;\frac{\epsilon_{i+1}}{\epsilon_{i}},\{0\}^{r};1)\\
 & \qquad\otimes I^{\mathfrak{a}}(0;\{0\}^{l_{0}},\dots,\epsilon_{i},\{0\}^{l_{i}+l_{i+1}-r},\epsilon_{i+2},\dots,\{0\}^{l_{d}};\epsilon_{d+1})
\end{align*}
modulo $\mathcal{A}\otimes C_{d-2}\mathcal{A}$.
\end{lem}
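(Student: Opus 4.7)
The plan is to apply the depth-graded coproduct identity (\ref{eq:DI_2C}) of Lemma~\ref{lem:inCd} to the word $(0, \{0\}^{l_0}, \epsilon_1, \{0\}^{l_1}, \ldots, \epsilon_d, \{0\}^{l_d}, \epsilon_{d+1})$, and then to normalize each resulting depth-$1$ left factor $I^{\mathfrak{a}}(a_s;a_{s+1},\ldots,a_{t-1};a_t)$ into the canonical form $I^{\mathfrak{a}}(0; \alpha, \{0\}^r; 1)$. First I would enumerate the admissible pairs $(s,t)$. Writing $p_i := i + l_0 + \cdots + l_{i-1}$ for the position of $\epsilon_i$ (with $p_0 := 0$ and $p_{d+1} := k+1$), the condition that a unique $\epsilon_i$ ($1 \leq i \leq d$) lies strictly between $s$ and $t$ forces $p_{i-1} \leq s < p_i < t \leq p_{i+1}$. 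Combined with $\{a_s,a_t\} \neq \{0\}$, this produces three cases for each $i$: (C1) $a_s = 0$ and $a_t = \epsilon_{i+1}$; (C2) $a_s = \epsilon_{i-1}$ and $a_t = 0$ (requires $i \geq 2$); (C3) $a_s = \epsilon_{i-1}$ and $a_t = \epsilon_{i+1}$ (requires $i \geq 2$).

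Next I would normalize the left factors using three standard identities: scaling invariance $I^{\mathfrak{a}}(\lambda a_0;\ldots;\lambda a_{k+1}) = I^{\mathfrak{a}}(a_0;\ldots;a_{k+1})$; path reversal $I^{\mathfrak{a}}(a;a_1,\ldots,a_k;b) = (-1)^k I^{\mathfrak{a}}(b;a_k,\ldots,a_1;a)$; and the shuffle-regularization identity
\[
I^{\mathfrak{a}}(0; \{0\}^m, x, \{0\}^n; 1) \;=\; (-1)^m \binom{m+n}{m}\, I^{\mathfrak{a}}(0; x, \{0\}^{m+n}; 1) \qquad (x \in \mu_N),
\]
derived inductively by shuffling the regularized value $I^{\mathfrak{a}}(0;0;1)=0$ into the word. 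Applied directly to (C1), these reproduce the terms of the first sum in its ``generic'' range (the full range $r = l_1,\ldots,l_0+l_1$ when $i = 1$, and $r = l_i,\ldots,l_{i-1}+l_i-1$ when $i \geq 2$); applied to (C2) after one path reversal, they reproduce the ``generic'' range of the second sum. For (C3) I would first decompose
\[
I^{\mathfrak{a}}(\epsilon_{i-1}; w; \epsilon_{i+1}) \;=\; \sum_{w = w_1 w_2} I^{\mathfrak{a}}(\epsilon_{i-1}; w_1; 0)\, I^{\mathfrak{a}}(0; w_2; \epsilon_{i+1})
\]
by path composition at the tangential base point $0$. Since $I^{\mathfrak{a}}(a;\{0\}^n;0) = I^{\mathfrak{a}}(0;\{0\}^n;b) = 0$ for $n \geq 1$, only the two extremal splittings $w_1=\emptyset$ and $w_2=\emptyset$ survive, and after normalization they supply precisely the missing endpoint terms $r = l_{i-1}+l_i$ in the first and second sums.

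The hard part will be the sign and binomial bookkeeping, especially at the (C3) corner, where path reversal, scaling, and the shuffle-regularization coefficient must combine to yield exactly the coefficients predicted by the endpoints of the two sums (namely $(-1)^{l_{i-1}}\binom{l_{i-1}+l_i}{l_i}$ from the $w_1 = \emptyset$ splitting and $-(-1)^{l_{i-1}}\binom{l_{i-1}+l_i}{l_{i-1}}$ from the $w_2 = \emptyset$ splitting, after the path-reversal sign $(-1)^{l_{i-1}+l_i+1}$ is absorbed). Once this accounting is verified, summing the contributions of (C1), (C2) and the two halves of (C3) yields the claimed congruence modulo $\mathcal{A} \otimes C_{d-2}\mathcal{A}$.
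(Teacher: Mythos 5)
Your proposal is correct and takes essentially the same route as the paper: apply the depth-graded coproduct (\ref{eq:DI_2C}), enumerate the admissible pairs $(s,t)$, decompose the left factor with both endpoints nonzero via path composition at the tangential basepoint $0$, and normalize the depth-one left factors using scaling, path reversal, and the shuffle-regularization identity $I^{\mathfrak{a}}(0;\{0\}^{m},x,\{0\}^{n};1)=(-1)^{m}\binom{m+n}{m}I^{\mathfrak{a}}(0;x,\{0\}^{m+n};1)$. The only difference is organizational, and cosmetic at that: the paper applies the path-composition splitting to the entire sum up front, so that the three cases merge into two uniformly parametrized sums $\sum_{a_t\neq 0}I^{\mathfrak{a}}(0;\dots;a_t)$ and $\sum_{a_s\neq 0}I^{\mathfrak{a}}(a_s;\dots;0)$ before reindexing via $r:=p+l_i$, whereas you treat the three cases (C1)/(C2)/(C3) separately and verify that (C3) supplies exactly the endpoint coefficients $(-1)^{l_{i-1}}\binom{l_{i-1}+l_i}{l_i}$ and $-(-1)^{l_{i-1}}\binom{l_{i-1}+l_i}{l_{i-1}}$, which is correct.
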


\begin{proof}
Let us calculate the left hand side by using (\ref{eq:DI_2C}). Put
\[
(a_{0},\dots,a_{k+1})=(0,\{0\}^{l_{0}},\epsilon_{1},\{0\}^{l_{1}}\dots,\epsilon_{d},\{0\}^{l_{d}},\epsilon_{d+1}).
\]
Note that the condition $\{a_{s},a_{t}\}\neq\{0\}$ in (\ref{eq:DI_2C})
is satisfied if and only if either $a_{s}=0$ and $a_{t}\neq0$, $a_{s}\neq0$
and $a_{t}=0$, or $a_{s}\neq0$ and $a_{t}\neq0$. Note also that
\begin{align*}
 & \sum_{\substack{0\leq s<t\leq k+1\\
\#\{j\,\mid\,s<j<t,\ a_{j}\neq0\}=1\\
a_{s}=0,\,a_{t}\neq0
}
}I^{\mathfrak{a}}(a_{s};a_{s+1},\dots,a_{t-1};a_{t})\otimes I(a_{0};a_{1},\dots a_{s-1},a_{s},a_{t},a_{t+1},\dots,a_{k};a_{k+1})\\
 & =\sum_{\substack{0\leq s<t\leq k+1\\
\#\{j\,\mid\,s<j<t,\ a_{j}\neq0\}=1\\
a_{s}=0,\,a_{t}\neq0
}
}I^{\mathfrak{a}}(0;a_{s+1},\dots,a_{t-1};a_{t})\otimes I(a_{0};a_{1},\dots a_{s-1},a_{s},a_{t},a_{t+1},\dots,a_{k};a_{k+1})
\end{align*}
and
\begin{align*}
 & \sum_{\substack{0\leq s<t\leq k+1\\
\#\{j\,\mid\,s<j<t,\ a_{j}\neq0\}=1\\
a_{s}\neq0,\,a_{t}=0
}
}I^{\mathfrak{a}}(a_{s};a_{s+1},\dots,a_{t-1};a_{t})\otimes I(a_{0};a_{1},\dots a_{s-1},a_{s},a_{t},a_{t+1},\dots,a_{k};a_{k+1})\\
 & =\sum_{\substack{0\leq s<t\leq k+1\\
\#\{j\,\mid\,s<j<t,\ a_{j}\neq0\}=1\\
a_{s}\neq0,\,a_{t}=0
}
}I^{\mathfrak{a}}(a_{s};a_{s+1},\dots,a_{t-1};0)\otimes I(a_{0};a_{1},\dots a_{s-1},a_{s},a_{t},a_{t+1},\dots,a_{k};a_{k+1}).
\end{align*}
Furthermore, by path composition formula,
\begin{align*}
 & \sum_{\substack{0\leq s<t\leq k+1\\
\#\{j\,\mid\,s<j<t,\ a_{j}\neq0\}=1\\
a_{s}\neq0,\,a_{t}\neq0
}
}I^{\mathfrak{a}}(a_{s};a_{s+1},\dots,a_{t-1};a_{t})\otimes I(a_{0};a_{1},\dots a_{s-1},a_{s},a_{t},a_{t+1},\dots,a_{k};a_{k+1})\\
 & =\sum_{\substack{0\leq s<t\leq k+1\\
\#\{j\,\mid\,s<j<t,\ a_{j}\neq0\}=1\\
a_{s}\neq0,\,a_{t}\neq0
}
}I^{\mathfrak{a}}(0;a_{s+1},\dots,a_{t-1};a_{t})\otimes I(a_{0};a_{1},\dots a_{s-1},a_{s},a_{t},a_{t+1},\dots,a_{k};a_{k+1})\\
 & \quad+\sum_{\substack{0\leq s<t\leq k+1\\
\#\{j\,\mid\,s<j<t,\ a_{j}\neq0\}=1\\
a_{s}\neq0,\,a_{t}\neq0
}
}I^{\mathfrak{a}}(a_{s};a_{s+1},\dots,a_{t-1};0)\otimes I(a_{0};a_{1},\dots a_{s-1},a_{s},a_{t},a_{t+1},\dots,a_{k};a_{k+1}).
\end{align*}
Therefore,
\begin{align*}
 & \sum_{\substack{0\leq s<t\leq k+1\\
\#\{j\,\mid\,s<j<t,\ a_{j}\neq0\}=1\\
\{a_{s},a_{t}\}\neq\{0\}
}
}I^{\mathfrak{a}}(a_{s};a_{s+1},\dots,a_{t-1};a_{t})\otimes I(a_{0};a_{1},\dots a_{s-1},a_{s},a_{t},a_{t+1},\dots,a_{k};a_{k+1})\\
 & =\sum_{\substack{0\leq s<t\leq k+1\\
\#\{j\,\mid\,s<j<t,\ a_{j}\neq0\}=1\\
a_{t}\neq0
}
}I^{\mathfrak{a}}(0;a_{s+1},\dots,a_{t-1};a_{t})\otimes I(a_{0};a_{1},\dots a_{s-1},a_{s},a_{t},a_{t+1},\dots,a_{k};a_{k+1})\\
 & \quad+\sum_{\substack{0\leq s<t\leq k+1\\
\#\{j\,\mid\,s<j<t,\ a_{j}\neq0\}=1\\
a_{s}\neq0
}
}I^{\mathfrak{a}}(a_{s};a_{s+1},\dots,a_{t-1};0)\otimes I(a_{0};a_{1},\dots a_{s-1},a_{s},a_{t},a_{t+1},\dots,a_{k};a_{k+1}).
\end{align*}
Furthermore, we have
\begin{align*}
 & \sum_{\substack{0\leq s<t\leq k+1\\
\#\{j\,\mid\,s<j<t,\ a_{j}\neq0\}=1\\
a_{t}\neq0
}
}I^{\mathfrak{a}}(0;a_{s+1},\dots,a_{t-1};a_{t})\otimes I^{\mathfrak{a}}(a_{0};a_{1},\dots a_{s-1},a_{s},a_{t},a_{t+1},\dots,a_{k};a_{k+1})\\
 & =\sum_{i=1}^{d}\sum_{p=0}^{l_{i-1}}I^{\mathfrak{a}}(0;\{0\}^{p},\epsilon_{i},\{0\}^{l_{i}};\epsilon_{i+1})\otimes I^{\mathfrak{a}}(0;\{0\}^{l_{0}},\dots,\epsilon_{i-1},\{0\}^{l_{i-1}-p},\epsilon_{i+1},\dots,\{0\}^{l_{d}};\epsilon_{d+1})\\
 & =\sum_{i=1}^{d}\sum_{r=l_{i}}^{l_{i-1}+l_{i}}(-1)^{r-l_{i}}{r \choose l_{i}}I^{\mathfrak{a}}(0;\frac{\epsilon_{i}}{\epsilon_{i+1}},\{0\}^{r};1)\\
 & \qquad\qquad\otimes I^{\mathfrak{a}}(0;\{0\}^{l_{0}},\dots,\epsilon_{i-1},\{0\}^{l_{i-1}+l_{i}-r},\epsilon_{i+1},\dots,\{0\}^{l_{d}};\epsilon_{d+1})\qquad(r:=p+l_{i})
\end{align*}
and
\begin{align*}
 & \sum_{\substack{0\leq s<t\leq k+1\\
\#\{j\,\mid\,s<j<t,\ a_{j}\neq0\}=1\\
a_{s}\neq0
}
}I^{\mathfrak{a}}(a_{s};a_{s+1},\dots,a_{t-1};0)\otimes I^{\mathfrak{a}}(a_{0};a_{1},\dots a_{s-1},a_{s},a_{t},a_{t+1},\dots,a_{k};a_{k+1})\\
 & =\sum_{i=1}^{d-1}\sum_{p=0}^{l_{i+1}}I^{\mathfrak{a}}(\epsilon_{i};\{0\}^{l_{i}},\epsilon_{i+1},\{0\}^{p};0)\otimes I^{\mathfrak{a}}(0;\{0\}^{l_{0}},\dots,\epsilon_{i},\{0\}^{l_{i+1}-p},\epsilon_{i+2},\dots,\{0\}^{l_{d}};\epsilon_{d+1})\\
 & =-\sum_{i=1}^{d-1}\sum_{r=l_{i}}^{l_{i}+l_{i+1}}(-1)^{l_{i}}{r \choose l_{i}}I^{\mathfrak{a}}(0;\frac{\epsilon_{i+1}}{\epsilon_{i}},\{0\}^{r};1)\\
 & \qquad\qquad\otimes I^{\mathfrak{a}}(0;\{0\}^{l_{0}},\dots,\epsilon_{i},\{0\}^{l_{i}+l_{i+1}-r},\epsilon_{i+2},\dots,\{0\}^{l_{d}};\epsilon_{d+1})\qquad(r:=p+l_{i}).
\end{align*}
Hence the lemma is proved.
\end{proof}

\section{Combinatorial restatement\label{sec:Comb}}

The purpose of this section is to discuss combinatorial aspects of
the motivic periods. In this section, we will introduce three combinatorial
$\mathbb{Q}$-modules $W$, $X$, and $Y$. Noting the natural mappings
\[
W\hookrightarrow X\twoheadrightarrow Y
\]
where the modules are named alphabetically may make it easier to memorize
these notations.

\subsection{Combinatorial restatement of $P(N,k,d)$}

By applying $\mathcal{D}:\mathrm{gr}_{d}^{C}\mathcal{A}\simeq\mathrm{gr}_{1}^{C}\mathcal{A}\otimes\mathrm{gr}_{d-1}^{C}\mathcal{A}$
repeatedly, we have $\mathrm{gr}_{d}^{C}\mathcal{A}\simeq(\mathrm{gr}_{1}^{C}\mathcal{A})^{\otimes d}$.
By using this, we can restate $P(N,k,d)$ in a combinatorial way.
The detail is as follows. By Lemma \ref{lem:calc_Di_depth}, we have

\begin{align}
 & \mathcal{D}(I^{\mathfrak{C}}(0;\epsilon_{1},\{0\}^{l_{1}},\dots,\epsilon_{d},\{0\}^{l_{d}};1))\nonumber \\
 & =I^{\mathfrak{C}}(0;\frac{\epsilon_{1}}{\epsilon_{2}},\{0\}^{l_{1}};1)\otimes I^{\mathfrak{C}}(0;\epsilon_{2},\{0\}^{l_{2}},\dots,\epsilon_{d},\{0\}^{l_{d}};1)\nonumber \\
 & \quad+\sum_{i=2}^{d-1}\sum_{r=l_{i}}^{l_{i-1}+l_{i}}(-1)^{r-l_{i}}\binom{r}{l_{i}}I^{\mathfrak{C}}(0;\frac{\epsilon_{i}}{\epsilon_{i+1}},\{0\}^{r};1)\otimes I^{\mathfrak{C}}(0;\dots,\epsilon_{i-1},\{0\}^{l_{i-1}+l_{i}-r},\epsilon_{i+1},\dots;1)\nonumber \\
 & \quad-\sum_{i=1}^{d-1}\sum_{r=l_{i}}^{l_{i}+l_{i+1}}(-1)^{l_{i}}\binom{r}{l_{i}}I^{\mathfrak{C}}(0;\frac{\epsilon_{i+1}}{\epsilon_{i}},\{0\}^{r};1)\otimes I^{\mathfrak{C}}(0;\dots,\epsilon_{i},\{0\}^{l_{i}+l_{i+1}-r},\epsilon_{i+2},\dots;1)\nonumber \\
 & \quad+\sum_{r=l_{d}}^{l_{d-1}+l_{d}}(-1)^{r-l_{d}}\binom{r}{l_{d}}I^{\mathfrak{C}}(0;\epsilon_{d},\{0\}^{r};1)\otimes I^{\mathfrak{C}}(0;\dots,\epsilon_{d-1},\{0\}^{l_{d-1}+l_{d}-r};1)\label{eq:calcD}
\end{align}
for $d\geq2$, $\epsilon_{1},\dots,\epsilon_{d}\in\mu_{N}$ and $l_{1},\dots,l_{d}\in\mathbb{Z}_{\geq0}$.
The structure of $\mathrm{gr}_{1}^{C}\mathcal{A}$ is given by the
following theorem.
\begin{thm}[{Deligne-Goncharov \cite[Theoreme 6.8]{DelGon}}]
\label{thm:DG-depth1}The $\mathbb{Q}$-vector space $\mathrm{gr}_{1}^{C}\mathcal{A}$
is spanned by $\{I^{\mathfrak{C}}(0;a,\{0\}^{l};1)\mid a\in\mu_{N},l\geq0\}$.
Furthermore, all the $\mathbb{Q}$-linear relations among them are
given by linear sums of the following identities:
\begin{enumerate}
\item $I^{\mathfrak{C}}(0;1;1)=0$.
\item $I^{\mathfrak{C}}(0;a,\{0\}^{l};1)=(-1)^{l}I^{\mathfrak{C}}(0;a^{-1},\{0\}^{l};1)$
for $a\in\mu_{N}$ and $l\geq0$.
\item $I^{\mathfrak{C}}(0;a^{M},\{0\}^{l};1)=M^{l}\sum_{b\in\mu_{M}}I^{\mathfrak{C}}(0;ab,\{0\}^{l};1)$
for $M\mid N$ and $a\in\mu_{N}$ with $(a^{M},l)\neq(1,0)$.
\end{enumerate}
\end{thm}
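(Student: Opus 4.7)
The plan is to reduce the problem to a computation of an $\mathrm{Ext}$-group in the Tannakian category $\mathcal{MT}(\mathbb{Q}(\zeta_N),\Gamma_N)$, and then match the space of motivic classical polylogarithms (modulo the proposed relations) against that $\mathrm{Ext}$-group via the Beilinson regulator and Borel's theorem.

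First I would exploit the fact that $\mathrm{gr}_1^{C}\mathcal{A}$ is the cotangent space at the identity of the prounipotent radical of the motivic Galois group; by the standard Tannakian dictionary this gives a natural identification
\[
\mathrm{gr}_1^{C}\mathcal{A}_k \;\cong\; \mathrm{Ext}^1_{\mathcal{MT}(\mathbb{Q}(\zeta_N),\Gamma_N)}\!\bigl(\mathbb{Q}(0),\mathbb{Q}(k)\bigr)^{\vee}.
\]
By Beilinson's theorem identifying motivic cohomology with rational $K$-theory and by the definition of $\Gamma_N$, this $\mathrm{Ext}$-group is $\Gamma_N \otimes \mathbb{Q}$ in weight $k=1$ and $K_{2k-1}(\mathbb{Q}(\zeta_N))\otimes\mathbb{Q}$ in weight $k \geq 2$; its dimension is then given explicitly by Borel's theorem.

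Next I would verify that the elements $I^{\mathfrak{C}}(0;a,\{0\}^{l};1)$, which are the motivic classical polylogarithms $\mathrm{Li}^{\mathfrak{C}}_{l+1}(a)$, satisfy the three claimed relations. Relation (i) is immediate since the underlying iterated integral is divergent in a removable way and the associated tangential-basepoint regularization is zero. Relation (ii) is the motivic lift of the classical inversion formula for $\mathrm{Li}_k$, which follows from a change-of-variables identity among iterated integrals modulo products. Relation (iii) is the motivic lift of the distribution relation, which can be proved by expressing both sides as iterated integrals and applying pullback along $t \mapsto t^M$.

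To conclude that these are \emph{all} the relations, I would perform a dimension count. Let $V_k$ be the free $\mathbb{Q}$-module on $\{(a,l) \mid a \in \mu_N,\ l+1 = k\}$ modulo the relations (i)--(iii); an elementary combinatorial analysis, identifying (ii) as a symmetry under $a \mapsto a^{-1}$ and (iii) as the vanishing of all $M$-th power classes for $M > 1$, yields an explicit dimension formula for $\dim_{\mathbb{Q}} V_k$. The hoped-for conclusion is that this coincides with the Borel rank of $K_{2k-1}(\mathbb{Q}(\zeta_N))\otimes\mathbb{Q}$ (together with $\dim_{\mathbb{Q}}\Gamma_N$ in weight one). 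Combined with the surjectivity of polylogarithms onto $\mathrm{Ext}^1$, this forces the listed relations to generate everything.

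The main obstacle is precisely this surjectivity, i.e.\ showing that the motivic polylogarithms actually realize the full $K_{2k-1}(\mathbb{Q}(\zeta_N))\otimes \mathbb{Q}$ in weight $k \geq 2$. Producing enough motivic classes is nontrivial and essentially requires computing the Beilinson regulator of $\mathrm{Li}^{\mathfrak{m}}_k(\zeta)$ and comparing it to Borel's regulator to see that its image spans the relevant real vector space; once linear independence of the regulators of a candidate basis is established, Borel's theorem upgrades this to a statement over $\mathbb{Q}$. This transcendental ingredient (the non-vanishing of a determinant of polylogarithm values at roots of unity) is the real content of the theorem, with the rest being formal Tannakian bookkeeping and classical polylogarithm identities.
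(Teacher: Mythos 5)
The paper does not prove this statement; it quotes it verbatim from Deligne--Goncharov \cite[Th\'eor\`eme 6.8]{DelGon} and treats it as a black box, using it in Section 3 to build the combinatorial model $Y\simeq\mathrm{gr}_1^{C}\mathcal{A}$. There is therefore no in-paper proof to compare your attempt against.

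That said, your outline is a faithful reconstruction of the strategy in the cited source: the Tannakian identification $\mathrm{gr}_1^{C}\mathcal{A}_k\cong\mathrm{Ext}^1_{\mathcal{MT}}(\mathbb{Q}(0),\mathbb{Q}(k))^\vee$, the computation of $\mathrm{Ext}^1$ via motivic cohomology and Borel's theorem, the verification of relations (i)--(iii) as motivic lifts of the shuffle regularization, inversion, and distribution identities for polylogarithms, and a concluding dimension count are precisely the steps Deligne and Goncharov carry out. You also correctly isolate the one genuinely hard ingredient: the surjectivity of the polylogarithm map onto $\mathrm{Ext}^1(\mathbb{Q}(0),\mathbb{Q}(k))$ for $k\geq 2$, i.e.\ that the motivic $\mathrm{Li}_k(\zeta)$ for $\zeta\in\mu_N$ span $K_{2k-1}(\mathbb{Q}(\zeta_N))\otimes\mathbb{Q}$. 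In the cited source this rests on Beilinson's theorem on cyclotomic (Soul\'e) elements, with its motivic refinement due to Huber--Wildeshaus, and the proof is indeed a regulator comparison of the kind you describe; the present paper never revisits it because its own contribution begins one coradical degree higher. Where your sketch is loosest is the dimension count: you need the free module on $\{(a,l)\}$ modulo (i)--(iii) to have dimension exactly $\varphi(N)/2$ for $k\geq 2$ and $\varphi(N)/2+\nu(N)-1$ for $k=1$ (this is the formula recorded as (\ref{eq:dim_Yk})), which requires checking that the distribution relations for the various divisors $M\mid N$ are mutually compatible and do not over-constrain; that verification is elementary but not immediate, and the argument does not go through by simply "identifying (iii) as the vanishing of all $M$-th power classes."
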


Let $X$ be the free $\mathbb{Q}$-module generated by the formal
symbols $\biseq{\epsilon}l$ with $\epsilon\in\mu_{N}$ and $l\in\mathbb{Z}_{\geq0}$.
Based on Theorem \ref{thm:DG-depth1}, we define the quotient space
$Y$ of $X$ as follows.
\begin{defn}
\label{def:Y}We denote by $Y$ the $\mathbb{Q}$-module generated
by the formal symbols $\left\langle \epsilon;l\right\rangle $ for
$\epsilon\in\mu_{N}$ and $l\geq0$ with relations
\begin{enumerate}
\item $\left\langle 1;0\right\rangle =0$.
\item $\left\langle a;l\right\rangle =(-1)^{l}\left\langle a^{-1};l\right\rangle $
for $a\in\mu_{N}$ and $l\geq0$.
\item $\left\langle a^{M};l\right\rangle =M^{l}\sum_{b\in\mu_{M}}\left\langle ab;l\right\rangle $
for $M\mid N$ and $a\in\mu_{N}$ with $(a^{M},l)\neq(1,0)$.
\end{enumerate}
We regard $Y$ as the quotient module of $X$ by the surjection $\biseq{\epsilon}l\mapsto\left\langle \epsilon;l\right\rangle .$
\end{defn}

Define the weight $k$ part $X_{k}$ (resp. $Y_{k}$) of $X$ (resp.
Y) as the subspace generated by $\biseq{\epsilon}{k-1}$ (resp. $\left\langle \epsilon;k-1\right\rangle $).
Note that $Y_{k}$ is canonically isomorphic to $\mathrm{gr}_{1}^{C}\mathcal{A}_{k}$
by Theorem \ref{thm:DG-depth1}. Furthermore, when $N\geq3$, we have
\begin{equation}
\dim_{\mathbb{Q}}Y_{k}=\dim_{\mathbb{Q}}\mathrm{gr}_{1}^{C}\mathcal{A}_{k}=\begin{cases}
\varphi(N)/2+\nu(N)-1 & k=1\\
\varphi(N)/2 & k\geq2
\end{cases}\label{eq:dim_Yk}
\end{equation}
where $\varphi$ is Euler's totient function and $\nu(N)$ is a number
of prime divisors of $N$ (see \cite{DelGon}). Hereafter, we simply
write $\biseq{\epsilon_{1},\dots,\epsilon_{d}}{l_{1},\dots,l_{d}}$
for $\biseq{\epsilon_{1}}{l_{1}}\otimes\cdots\otimes\biseq{\epsilon_{d}}{l_{d}}\in X^{\otimes d}$.
Based on (\ref{eq:calcD}), define $D_{d}:X^{\otimes d}\to Y\otimes X^{\otimes(d-1)}$
by

\begin{align*}
 & D_{d}\left(\biseq{\epsilon_{1},\dots,\epsilon_{d}}{l_{1},\dots,l_{d}}\right)\\
 & =\left\langle \frac{\epsilon_{1}}{\epsilon_{2}};l_{1}\right\rangle \otimes\biseq{\epsilon_{2},\dots,\epsilon_{d}}{l_{2},\dots,l_{d}}\\
 & \quad+\sum_{i=2}^{d-1}\sum_{r=l_{i}}^{l_{i-1}+l_{i}}(-1)^{r-l_{i}}\binom{r}{l_{i}}\left\langle \frac{\epsilon_{i}}{\epsilon_{i+1}};r\right\rangle \otimes\biseq{\epsilon_{1},\dots,\widehat{\epsilon_{i}},\dots,\epsilon_{d}}{l_{1},\dots,l_{i-2},l_{i-1}+l_{i}-r,l_{i+1},\dots,l_{d}}\\
 & \quad-\sum_{i=1}^{d-1}\sum_{r=l_{i}}^{l_{i}+l_{i+1}}(-1)^{l_{i}}\binom{r}{l_{i}}\left\langle \frac{\epsilon_{i+1}}{\epsilon_{i}};r\right\rangle \otimes\biseq{\epsilon_{1},\dots,\widehat{\epsilon_{i+1}},\dots,\epsilon_{d}}{l_{1},\dots,l_{i-1},l_{i}+l_{i+1}-r,l_{i+2},\dots,l_{d}}\\
 & \quad+\sum_{r=l_{d}}^{l_{d-1}+l_{d}}(-1)^{r-l_{d}}\binom{r}{l_{d}}\left\langle \epsilon_{d};r\right\rangle \otimes\biseq{\epsilon_{1},\dots,\epsilon_{d-1}}{l_{1},\dots,l_{d-2},l_{d-1}+l_{d}-r}
\end{align*}
for $d\geq2$ and $D_{1}(\biseq{\epsilon_{1}}{l_{1}})=\left\langle \epsilon_{1};l_{1}\right\rangle $.
For $k,d\geq0$, define the weight $k$ parts $(X^{\otimes d})_{k}\subset X^{\otimes d}$
and $(Y^{\otimes d})_{k}\subset Y^{\otimes d}$ by
\[
(X^{\otimes d})_{k}=\mathrm{span}_{\mathbb{Q}}\left\{ \biseq{\epsilon_{1},\dots,\epsilon_{d}}{l_{1},\dots,l_{d}}\in X^{\otimes d}\,\left|\,l_{1}+\cdots+l_{d}=k-d\right.\right\} 
\]
and
\[
(Y^{\otimes d})_{k}=\mathrm{span}_{\mathbb{Q}}\left\{ \left\langle \epsilon_{1};l_{1}\right\rangle \otimes\cdots\otimes\left\langle \epsilon_{d};l_{d}\right\rangle \in Y^{\otimes d}\,\left|\,l_{1}+\cdots+l_{d}=k-d\right.\right\} ,
\]
respectively.
\begin{prop}[Combinatorial restatement of $P(N,k,d)$]
\label{prop:restate_P_Nkd}Define $D_{d}^{\mathrm{iter}}:X^{\otimes d}\to Y^{\otimes d}$
by the composite map
\[
X^{\otimes d}\xrightarrow{D_{d}}Y\otimes X^{\otimes(d-1)}\xrightarrow{\mathrm{id}\otimes D_{d-1}}Y\otimes Y\otimes X^{\otimes(d-2)}\to\cdots\to Y^{\otimes d}.
\]
Then $P(N,k,d)$ is equivalent to the statement that the map
\[
(X^{\otimes d})_{k}\xrightarrow{D_{d}^{\mathrm{iter}}}(Y^{\otimes d})_{k}
\]
is surjective.
\end{prop}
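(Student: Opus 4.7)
The plan is to build a bridge between the motivic side and the combinatorial side by introducing the natural realization map $\phi_d \colon X^{\otimes d} \to \mathrm{gr}_d^C \mathcal{A}$ sending $\biseq{\epsilon_1,\dots,\epsilon_d}{l_1,\dots,l_d}$ to the coradical graded motivic iterated integral $I^{\mathfrak{C}}(0;\epsilon_1,\{0\}^{l_1},\dots,\epsilon_d,\{0\}^{l_d};1)$. This map respects the weight grading (the convention $l_1+\cdots+l_d=k-d$ corresponds to total weight $k$), and by definition its image on $(X^{\otimes d})_k$ is exactly the subspace of $\mathrm{gr}_d^C \mathcal{A}_k$ appearing in Definition \ref{def:P_Nkd}. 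Hence $P(N,k,d)$ is equivalent to the surjectivity of $\phi_d \colon (X^{\otimes d})_k \to \mathrm{gr}_d^C \mathcal{A}_k$.

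Next, I would iterate the coproduct-induced isomorphism $\mathcal{D}\colon \mathrm{gr}_d^C \mathcal{A} \xrightarrow{\sim} \mathrm{gr}_1^C \mathcal{A} \otimes \mathrm{gr}_{d-1}^C \mathcal{A}$ to obtain a weight-preserving isomorphism $\mathcal{D}^{\mathrm{iter}} \colon \mathrm{gr}_d^C \mathcal{A} \xrightarrow{\sim} (\mathrm{gr}_1^C \mathcal{A})^{\otimes d}$, and combine it with the Deligne--Goncharov identification $Y \xrightarrow{\sim} \mathrm{gr}_1^C \mathcal{A}$, $\left\langle \epsilon;l\right\rangle \mapsto I^{\mathfrak{C}}(0;\epsilon,\{0\}^{l};1)$, provided by Theorem \ref{thm:DG-depth1}, to obtain an isomorphism $\psi_d \colon \mathrm{gr}_d^C \mathcal{A} \xrightarrow{\sim} Y^{\otimes d}$. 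The central step is to verify commutativity of the square
\[
\begin{array}{ccc}
X^{\otimes d} & \xrightarrow{\,D_d\,} & Y \otimes X^{\otimes (d-1)} \\[2pt]
\phi_d \downarrow\ \ & & \ \ \downarrow \mathrm{id}_Y \otimes \phi_{d-1} \\[2pt]
\mathrm{gr}_d^C \mathcal{A} & \xrightarrow{\,\mathcal{D}\,} & \mathrm{gr}_1^C \mathcal{A} \otimes \mathrm{gr}_{d-1}^C \mathcal{A}
\end{array}
\]
which is exactly a term-by-term comparison of the definition of $D_d$ with the formula for $\mathcal{D}(I^{\mathfrak{C}}(\dots))$ displayed in equation (\ref{eq:calcD}); the quotient $X \twoheadrightarrow Y$ used in the first tensor slot of the target is well-defined precisely because the three relations in Definition \ref{def:Y} are matched by the three families of relations among the depth-one classes $I^{\mathfrak{C}}(0;a,\{0\}^{l};1)$ listed in Theorem \ref{thm:DG-depth1}.

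Granted this commutative square, a straightforward induction on $d$ yields commutativity of
\[
\psi_d \circ \phi_d \;=\; D_d^{\mathrm{iter}} \colon (X^{\otimes d})_k \longrightarrow (Y^{\otimes d})_k.
\]
Since $\psi_d$ is an isomorphism (and restricts to isomorphisms on weight-$k$ parts), the image of $\phi_d$ corresponds under $\psi_d$ to the image of $D_d^{\mathrm{iter}}$. Thus $\phi_d$ is surjective in weight $k$ if and only if $D_d^{\mathrm{iter}}$ is surjective in weight $k$, which together with the first paragraph gives the desired equivalence.

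The only potentially delicate point is the commutativity of the basic square. However, equation (\ref{eq:calcD}) has already been derived from Lemma \ref{lem:calc_Di_depth} as the image under iterated $\mathcal{D}$ of the generators, and the definition of $D_d$ mirrors this formula symbol by symbol; consequently the check amounts to bookkeeping of indices, signs, and binomial coefficients, with no genuine content beyond what has already been recorded.
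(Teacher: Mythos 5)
Your overall route matches what the paper does implicitly (the paper gives no written proof of Proposition~\ref{prop:restate_P_Nkd} but clearly intends exactly this chain: identify $\mathrm{gr}_1^C\mathcal{A}$ with $Y$ via Theorem~\ref{thm:DG-depth1}, iterate $\mathcal{D}$, and read off $D_d$ from equation~(\ref{eq:calcD})). The commutative-square check is genuinely just bookkeeping against (\ref{eq:calcD}), and the induction is fine.

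There is, however, one step that you dismiss as ``by definition'' which is actually a real lemma and should be invoked. You assert that the image of $\phi_d$ on $(X^{\otimes d})_k$ ``is exactly the subspace of $\mathrm{gr}_d^C\mathcal{A}_k$ appearing in Definition~\ref{def:P_Nkd}.'' But Definition~\ref{def:P_Nkd} refers to the span of \emph{all} $I^{\mathfrak{C}}(a_0;a_1,\dots,a_k;a_{k+1})$ with $a_0,\dots,a_{k+1}\in\{0\}\cup\mu_N$ of weight $k$ and depth $d$, while $\phi_d$ only produces the normalized classes $I^{\mathfrak{C}}(0;\epsilon_1,\{0\}^{l_1},\dots,\epsilon_d,\{0\}^{l_d};1)$ (endpoints $0$ and $1$, no leading zeros, first letter nonzero). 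That these two spans coincide is a standard reduction, but it is not definitional: one needs the scaling relation $I^{\mathfrak{a}}(a_0;\dots;a_{k+1})=I^{\mathfrak{a}}(a_0/a_{k+1};\dots;1)$ when $a_{k+1}\neq 0$, the reversal relation and the vanishing of $I^{\mathfrak{a}}(a;\dots;a)$ to handle $a_{k+1}=0$, and shuffle regularization to eliminate leading zeros $\{0\}^{l_0}$. Without this the ``only if'' direction of the equivalence is not established: $P(N,k,d)$ could in principle hold because the full set of integrals spans while the normalized subfamily does not. Once you add a sentence invoking this reduction (or cite that these relations hold already in $\mathcal{A}$ and hence in $\mathrm{gr}^C_d\mathcal{A}$), the argument is complete and agrees with the paper's.
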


\subsection{Combinatorial restatement of Theorem \ref{thm:main-kd-basis}}

Let $N=qp^{M}$ with $p\in\{2,3\}$, $q=6-p$, and $M\geq0$. Fix
an $N$-th primitive root $\zeta_{N}$ of unity. Put 
\[
\nu_{N}=\{\zeta_{N}^{s}\mid s\equiv1\pmod{q}\}\subset\mu_{N}.
\]
Define a submodule $W$ of $X$ by
\[
W:=\mathrm{span}_{\mathbb{Q}}\left\{ \biseq{\epsilon}l\,\left|\,\epsilon\in\nu_{N},\,l\in\mathbb{Z}_{\geq0}\right.\right\} .
\]
Then, Theorem \ref{thm:main-kd-basis} is equivalent to the statement
that for all $d\geq1$ the map
\[
W^{\otimes d}\xrightarrow{D_{d}^{\mathrm{iter}}}Y^{\otimes d}
\]
is bijective. Note that $\{W^{\otimes d}\}_{d}$ is stable by $D_{d}$,
i.e,
\[
D_{d}(W^{\otimes d})\subset Y\otimes W^{\otimes(d-1)}.
\]
Therefore, we can reduce the bijectivities of $D_{d}^{\mathrm{iter}}$
to the ones of $D_{d}$, i.e.,
\[
(\forall d\ W^{\otimes d}\xrightarrow{D_{d}^{\mathrm{iter}}}Y^{\otimes d}\text{ is bijective})\Leftrightarrow(\forall d\ W^{\otimes d}\xrightarrow{D_{d}}Y\otimes W^{\otimes(d-1)}\text{ is bijective}).
\]
A $\mathbb{Q}$-basis of $Y$ is given by the following lemma.
\begin{lem}
\label{lem:Ybasis}A $\mathbb{Q}$-basis of $Y$ is given by 
\[
\left\{ \left.\left\langle \epsilon;l\right\rangle \right|\epsilon\in\nu_{N},\ l\in\mathbb{Z}_{\geq0}\right\} ,
\]
i.e., the natural composite map
\begin{equation}
W\hookrightarrow X\twoheadrightarrow Y\label{eq:WXY}
\end{equation}
is bijective.
\end{lem}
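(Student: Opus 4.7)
The plan is to compare dimensions of the weight--$k$ pieces of $W$ and $Y$ and then to prove that the composite $W\to Y$ is surjective. From the Deligne--Goncharov formula (\ref{eq:dim_Yk}), together with $\nu(N)=1$ since $N$ is a prime power, we obtain $\dim_{\mathbb{Q}}Y_{k}=\varphi(N)/2$ for every $k\geq 1$. On the other hand, $W_{k}$ is freely spanned by the symbols $\biseq{\epsilon}{k-1}$ with $\epsilon\in\nu_{N}$, so $\dim_{\mathbb{Q}}W_{k}=|\nu_{N}|=N/q$; a direct check ($\varphi(2^{M+2})=2^{M+1}$ for $p=2$ and $\varphi(3^{M+1})=2\cdot 3^{M}$ for $p=3$) gives $N/q=\varphi(N)/2$ in both cases. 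Hence $\dim W_{k}=\dim Y_{k}$, and it is enough to prove surjectivity.

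For surjectivity, I would induct on the order of $\epsilon\in\mu_{N}$, showing at each stage that $\left\langle \epsilon;l\right\rangle$ lies in the image of $W$. For the base case $\mathrm{ord}(\epsilon)=N$, write $\epsilon=\zeta_{N}^{s}$ with $\gcd(s,N)=1$. Since $q\in\{3,4\}$ is itself a power of $p$, the reduced residues modulo $q$ are precisely $\pm 1$; thus either $s\equiv 1\pmod{q}$, placing $\epsilon$ directly in $\nu_{N}$, or $s\equiv -1\pmod{q}$, in which case the inversion relation (ii) gives $\left\langle \epsilon;l\right\rangle =(-1)^{l}\left\langle \epsilon^{-1};l\right\rangle$ and $\epsilon^{-1}\in\nu_{N}$.

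For the inductive step, suppose $\mathrm{ord}(\epsilon)=p^{m}$ with $m<k$, where $N=p^{k}$. Relation (i) disposes of $(\epsilon,l)=(1,0)$. If $\epsilon=1$ and $l\geq 1$, applying (iii) with $a=1$, $M=p$ collapses to $(1-p^{l})\left\langle 1;l\right\rangle =p^{l}\sum_{b\in\mu_{p}\setminus\{1\}}\left\langle b;l\right\rangle$, and $1-p^{l}\neq 0$ lets us write $\left\langle 1;l\right\rangle$ in terms of elements of order $p$. If $\epsilon\neq 1$, write $\epsilon=\zeta_{N}^{s}$ with $v_{p}(s)=k-m\geq 1$ and set $a=\zeta_{N}^{s/p}$; relation (iii) yields
\[
\left\langle \epsilon;l\right\rangle =p^{l}\sum_{b\in\mu_{p}}\left\langle ab;l\right\rangle .
\]
The main point to verify is that $v_{p}(s/p)=k-m-1\leq k-2$, so adding any $jp^{k-1}$ leaves the valuation unchanged, and each $ab$ therefore has order $p^{m+1}>p^{m}$. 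The inductive hypothesis then places every $\left\langle ab;l\right\rangle$ in the image of $W$. The hardest part of the argument is precisely this valuation bookkeeping, which is what ensures that the distribution relation is an honest reduction toward $\nu_{N}$ rather than a circular identity; with it in hand surjectivity follows, and the dimension count supplies injectivity.
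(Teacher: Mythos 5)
Your proof is correct and follows essentially the same strategy as the paper: the dimension identity from (\ref{eq:dim_Yk}) reduces the claim to surjectivity, which is then established using relations (i)--(iii) of Definition \ref{def:Y}, with the key point in both arguments being that $(\mathbb{Z}/q\mathbb{Z})^{\times}=\{\pm1\}$ since $q\in\{3,4\}$. The only minor difference is that you iterate relation (iii) with $M=p$ via a downward induction on $\mathrm{ord}(\epsilon)$, whereas the paper applies (iii) once with $M=p^{v(x)}$ to land directly on primitive $N$-th roots and then finishes with relation (ii).
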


Before proceeding to the proof of the lemma, we introduce the notion
of valuation.
\begin{defn}
For $x\in\mu_{N}\setminus\{1\}$, we define the valuation $v(x)=v^{(N)}(x)\in\mathbb{Z}_{\geq0}$
of $x$ by the following equivalent way:
\begin{itemize}
\item $v(x)$ is the maximal integer such that $\{y\in\mu_{N}\mid y^{p^{s(x)}}=x\}\neq\emptyset$,
\item $v(x)$ is the unique non-negative integer such that $\{\epsilon\in\nu_{N}\sqcup\nu_{N}^{-1}\,\mid\,\epsilon^{p^{s(x)}}=x\}\neq\emptyset,$
\item $v(x)$ is the $p$-adic order of $m$ where $m$ is an integer such
that $\zeta_{N}^{m}=x$.
\end{itemize}
\end{defn}

\begin{proof}[Proof of Lemma \ref{lem:Ybasis}]
By (\ref{eq:dim_Yk}), $\dim_{\mathbb{Q}}W_{k}=\dim_{\mathbb{Q}}Y_{k}$.
Thus, it is enough to show that $\left\langle x;l\right\rangle $
is in $\mathrm{span}_{\mathbb{Q}}\left\{ \left.\left\langle \epsilon;l\right\rangle \right|\epsilon\in\nu_{N}\right\} $
for $x\in\mu_{N}$ and $l\in\mathbb{Z}_{\geq0}$. First, let us consider
the case $x\neq1$. Fix any $y\in$ such that $y^{p^{v(x)}}=x$. Then,
by Definition \ref{def:Y} (iii), 
\[
\left\langle x;l\right\rangle =p^{lv(x)}\sum_{b\in\mu_{p^{v(x)}}}\left\langle yb;l\right\rangle .
\]
Here, $yb\in\nu_{N}$ or $(yb)^{-1}\in\nu_{N}$, and further, 
\[
\left\langle yb;l\right\rangle =(-1)^{l}\left\langle yb;l\right\rangle 
\]
by Definition \ref{def:Y} (ii). Thus, $\left\langle x;l\right\rangle $
is in $\mathrm{span}_{\mathbb{Q}}\left\{ \left.\left\langle \epsilon;l\right\rangle \right|\epsilon\in\nu_{N}\right\} $
if $x\neq1$. Finally, the case $x=1$ is reduced to the cases $x\neq1$
since 
\[
\left\langle 1;l\right\rangle =\begin{cases}
0 & l=0\\
\frac{p^{l}}{1-p^{l}}\sum_{x\in\zeta_{p}\setminus\{0\}}\left\langle x;l\right\rangle  & l>0
\end{cases}
\]
by Definition \ref{def:Y} (i), (iii).
\end{proof}
Let $\Theta:Y\to W$ be the inverse map of (\ref{eq:WXY}). The explicit
values of $\Theta\left(\left\langle x;l\right\rangle \right)$ are
as follows.

\begin{lem}
\label{lem:Theta_value}(i) For $x\in\mu_{N}\setminus\{1\}$ and $l\in\mathbb{Z}_{\geq0}$,
we have

\[
\Theta\left(\left\langle x;l\right\rangle \right)=\sum_{c\in\pm1}\sum_{\epsilon\in\Lambda_{c}(x)}c^{l}p^{v(x)l}\biseq{\epsilon}l
\]
where 
\[
\Lambda_{c}(x)=\{\epsilon\in\nu_{N}\mid\epsilon^{cp^{v(x)}}=x\}.
\]
(ii) For $l\in\mathbb{Z}_{\geq0}$, we have
\[
\Theta\left(\left\langle 1;l\right\rangle \right)=\begin{cases}
0 & l=0\\
\frac{N^{l}+(-N)^{l}}{1-p^{l}}\sum_{\epsilon\in\nu_{N}}\seq{\epsilon;l} & l>0.
\end{cases}
\]
\end{lem}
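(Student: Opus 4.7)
The plan is to evaluate $\Theta(\langle x;l\rangle)$ by successively applying the defining relations (i)--(iii) of Definition \ref{def:Y} to rewrite $\langle x;l\rangle$ as a $\mathbb{Q}$-linear combination of symbols $\langle\epsilon;l\rangle$ with $\epsilon\in\nu_{N}$; on such symbols $\Theta$ acts tautologically by $\langle\epsilon;l\rangle\mapsto\biseq{\epsilon}{l}$, since the composite $W\hookrightarrow X\twoheadrightarrow Y$ is the identity on the generators indexed by $\nu_N$.

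For part (i), fix $x\in\mu_{N}\setminus\{1\}$ and set $v=v(x)$. By the second bulleted description of $v(x)$, one may choose $\epsilon_{0}\in\nu_{N}\sqcup\nu_{N}^{-1}$ with $\epsilon_{0}^{p^{v}}=x$. Relation (iii) applied with $a=\epsilon_{0}$ and $M=p^{v}$---legitimate since $p^{v}\mid N$ and $a^{M}=x\neq 1$---gives
\[
\langle x;l\rangle=p^{vl}\sum_{b\in\mu_{p^{v}}}\langle\epsilon_{0}b;l\rangle.
\]
A short check of exponents modulo $p$ and modulo $q$ shows that each $\epsilon_{0}b$ is a primitive $N$-th root of unity whose exponent lies in $\{\pm1\}\bmod q$, hence $\epsilon_{0}b\in\nu_{N}\sqcup\nu_{N}^{-1}$; applying relation (ii) to those summands with $\epsilon_{0}b\in\nu_{N}^{-1}$ rewrites each such term as $(-1)^{l}\langle(\epsilon_{0}b)^{-1};l\rangle$, pushing all indices into $\nu_{N}$. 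Finally, as $b$ runs over $\mu_{p^{v}}$, the set $\{\epsilon_{0}b\}$ is exactly the set of $p^{v}$-th roots of $x$ in $\mu_{N}$: its intersection with $\nu_{N}$ is $\Lambda_{1}(x)$ by definition, and the inversion bijection $\epsilon\mapsto\epsilon^{-1}$ identifies its intersection with $\nu_{N}^{-1}$ with $\Lambda_{-1}(x)$. Collecting contributions reproduces the claimed formula.

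For part (ii), the case $l=0$ is relation (i) itself. For $l>0$, applying relation (iii) with $a=1$ and $M=p$ and solving for $\langle 1;l\rangle$ (permissible because $p^{l}\neq 1$) yields
\[
\langle 1;l\rangle=\frac{p^{l}}{1-p^{l}}\sum_{b\in\mu_{p}\setminus\{1\}}\langle b;l\rangle.
\]
For each $b\in\mu_{p}\setminus\{1\}$ one computes $v(b)=\log_{p}N-1$, and a direct determination of $\Lambda_{\pm1}(b)$ (which, by inspection of $b$'s exponent modulo $p$, equals either $\nu_{N}$ or $\emptyset$ when $p=3$, and equals $\nu_N$ for both signs when $p=2$) expresses $\Theta(\langle b;l\rangle)$ through part (i) as an explicit scalar multiple of $\sum_{\epsilon\in\nu_{N}}\biseq{\epsilon}{l}$. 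Substituting, summing over $b$, and simplifying using $p\cdot p^{(\log_{p}N-1)l}=N^{l}$ together with $(-N)^{l}=(-1)^{l}N^{l}$ delivers the asserted value of $\Theta(\langle 1;l\rangle)$.

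The one technical subtlety, arising only when $p=2$ and $v(x)=M+1$ in part (i), is that $p^{\log_{p}N-v(x)}=2$ is not divisible by $q=4$, so the exponents of the $\epsilon_{0}b$ oscillate modulo $q$ as $b$ varies; in that case both $\Lambda_{1}(x)$ and $\Lambda_{-1}(x)$ are nonempty (this is precisely the situation $x\in\mu_{p}\setminus\{1\}$ invoked in part (ii)). Nevertheless the same counting argument still applies, because exactly half of the $\epsilon_{0}b$'s land in $\nu_{N}$ and half in $\nu_{N}^{-1}$, filling out $\Lambda_{1}(x)$ and $\Lambda_{-1}(x)$ respectively.
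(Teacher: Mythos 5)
Your proposal is correct and takes essentially the same route as the paper, whose proof of this lemma is simply the instruction to trace the proof of Lemma~\ref{lem:Ybasis}; you have carried out that trace carefully, in particular identifying $\{\epsilon_0 b\}\cap\nu_N$ with $\Lambda_1(x)$ and $\{\epsilon_0 b\}\cap\nu_N^{-1}$ (via inversion) with $\Lambda_{-1}(x)$, which is exactly how the signs $c^l$ arise, and you correctly flag the single delicate case $p=2$, $x=-1$ where both $\Lambda_{\pm1}(x)$ are nonempty.
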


\begin{proof}
This can be easily obtained by tracing the proof of Lemma \ref{lem:Ybasis}.
\end{proof}
Now, define an isomorphism $\iota_{d}$ by
\[
\iota_{d}:Y\otimes W^{\otimes(d-1)}\to W^{\otimes d}\ ;\ y\otimes w\mapsto w\otimes\Theta(y)\qquad(y\in Y,\,w\in W^{\otimes d-1}).
\]
Put $E_{d}=\iota_{d}\circ D_{d}$. By definition of $D_{d}$ and $\iota_{d}$,
we have
\begin{align}
 & E_{d}\left(\biseq{\epsilon_{1},\dots,\epsilon_{d}}{l_{1},\dots,l_{d}}\right)\nonumber \\
 & =\biseq{\epsilon_{2},\dots,\epsilon_{d}}{l_{2},\dots,l_{d}}\otimes\Theta\left(\left\langle \frac{\epsilon_{1}}{\epsilon_{2}};l_{1}\right\rangle \right)\nonumber \\
 & \quad+\sum_{i=2}^{d-1}\sum_{r=l_{i}}^{l_{i-1}+l_{i}}(-1)^{r-l_{i}}\binom{r}{l_{i}}\biseq{\epsilon_{1},\dots,\widehat{\epsilon_{i}},\dots,\epsilon_{d}}{l_{1},\dots,l_{i-2},l_{i-1}+l_{i}-r,\,l_{i+1},\dots,l_{d}}\otimes\Theta\left(\left\langle \frac{\epsilon_{i}}{\epsilon_{i+1}};r\right\rangle \right)\nonumber \\
 & \quad-\sum_{i=1}^{d-1}\sum_{r=l_{i}}^{l_{i}+l_{i+1}}(-1)^{l_{i}}\binom{r}{l_{i}}\otimes\biseq{\epsilon_{1},\dots,\widehat{\epsilon_{i+1}},\dots,\epsilon_{d}}{l_{1},\dots,l_{i-1},l_{i}+l_{i+1}-r,l_{i+2},\dots,l_{d}}\otimes\Theta\left(\left\langle \frac{\epsilon_{i+1}}{\epsilon_{i}};r\right\rangle \right)\nonumber \\
 & \quad+\sum_{r=l_{d}}^{l_{d-1}+l_{d}}(-1)^{r-l_{d}}\binom{r}{l_{d}}\biseq{\epsilon_{1},\dots,\epsilon_{d}}{l_{1},\dots,l_{d-2},l_{d-1}+l_{d}-r,r}.\label{eq:iotaD}
\end{align}
Note that the degree $k$ part of $X_{d}$ is finite dimensional for
each $k$. Thus,
\begin{align*}
\text{Theorem \ref{thm:main-kd-basis}} & \Leftrightarrow(\forall d\ W^{\otimes d}\xrightarrow{D_{d}}Y\otimes W^{\otimes(d-1)}\text{ is bijective})\\
 & \Leftrightarrow(\forall d\ W^{\otimes d}\xrightarrow{E_{d}}W^{\otimes d}\text{ is bijective}).
\end{align*}

\subsection{Modulo $p$ reduction}

Put $\mathbb{Z}_{(p)}=\{\frac{a}{b}\mid a\in\mathbb{Z},\,b\in\mathbb{Z}\setminus p\mathbb{Z}\}\subset\mathbb{Q}$
and $\mathbb{F}_{p}=\mathbb{Z}/p\mathbb{Z}$. Let $W^{(p)}\subset W$
be a $\mathbb{Z}_{(p)}$-lattice spanned by $\biseq{\epsilon}l$ with
$\epsilon\in\nu_{N}$, $l\in\mathbb{Z}_{\geq0}$. Then, since
\[
E_{d}\left(\left(W^{(p)}\right)^{\otimes d}\right)\subset\left(W^{(p)}\right)^{\otimes d}
\]
by Lemma \ref{lem:Theta_value} and (\ref{eq:iotaD}), we can consider
its modulo $p$ reduction
\[
\mathcal{E}_{d}:\mathcal{W}^{\otimes d}\to\mathcal{W}^{\otimes d}
\]
where we put
\[
\mathcal{W}=\mathbb{F}_{p}\otimes W^{(p)}.
\]
Then,
\begin{align*}
\text{Theorem \ref{thm:main-kd-basis}} & \Leftrightarrow(\forall d\ W^{\otimes d}\xrightarrow{E_{d}}W^{\otimes d}\text{ is bijective})\\
 & \Leftarrow(\forall d\ \mathcal{W}^{\otimes d}\xrightarrow{\mathcal{E}_{d}}\mathcal{W}^{\otimes d}\text{ is bijective})\\
 & \Leftarrow(\forall k\,\forall d\ (\mathcal{W}^{\otimes d})_{k}\xrightarrow{(\mathcal{E}_{d})_{k}}(\mathcal{W}^{\otimes d})_{k}\text{ is unipotent})
\end{align*}
where $(\mathcal{W}^{\otimes d})_{k}\subset\mathcal{W}^{\otimes d}$
(resp. $(\mathcal{E}_{d})_{k}$) is the weight $k$ part of $\mathcal{W}^{\otimes d}$
(resp. $\mathcal{E}_{d}$). In this paper, we prove the last claim.
\begin{thm}[A refinement of Theorem \ref{thm:main-kd-basis}]
 \label{thm:strong_main}For $k\geq d\geq2$, the map $(\mathcal{E}_{d})_{k}$
is unipotent, i.e., $((\mathcal{E}_{d})_{k}-\mathrm{id})^{n}=0$ for
some $n$.
\end{thm}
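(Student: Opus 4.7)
The strategy is to exhibit a filtration on $(\mathcal{W}^{\otimes d})_k$ preserved by $\mathcal{E}_d$ on whose graded pieces $\mathcal{E}_d-\mathrm{id}$ acts nilpotently. I would begin by simplifying $\mathcal{E}_d$ modulo $p$ using Lemma~\ref{lem:Theta_value}. Two arithmetic facts produce drastic simplifications: (i) $p\mid N$ (since $N=qp^M$ with $q=6-p\in\{3,4\}$ itself divisible by $p$), which forces $\Theta(\langle 1;l\rangle)\equiv 0\pmod p$ for every $l\geq 1$; and (ii) $v(\epsilon/\epsilon')\geq 1$ for distinct $\epsilon,\epsilon'\in\nu_N$ (since $\nu_N$ consists of roots $\zeta_N^m$ with $m\equiv 1\pmod q$ and $q$ is divisible by $p$), which kills the coefficient $p^{v(\cdot)l}$ in $\Theta(\langle\epsilon/\epsilon';l\rangle)$ modulo $p$ for $l\geq 1$. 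Hence in~\eqref{eq:iotaD} the three ``cross'' sums contribute modulo $p$ only when the relevant $l$-parameter equals $0$, and in every surviving case the output is a basis vector with new last coordinate $l_d^{\mathrm{new}}=0$.

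Next I would write $\mathcal{E}_d-\mathrm{id}=N_{\mathrm{last}}+N_{\mathrm{cross}}$, where $N_{\mathrm{last}}$ collects the $r>l_d$ terms of the final (non-$\Theta$) sum in~\eqref{eq:iotaD} and $N_{\mathrm{cross}}$ the surviving cross contributions. Then $N_{\mathrm{last}}$ affects only $(l_{d-1},l_d)$, strictly increases $l_d$, preserves $\boldsymbol{\epsilon}$, and vanishes when $l_{d-1}=0$; while $N_{\mathrm{cross}}$ fires only if some $l_i=0$ with $i<d$ and always outputs elements with $l_d^{\mathrm{new}}=0$. Define the trailing-zero count $t(\mathbf{l})=\max\{t:l_{d-t+1}=\cdots=l_d=0\}$ and the filtration $F^{(t_0)}=\mathrm{span}\{\seq{\boldsymbol{\epsilon};\mathbf{l}}:t(\mathbf{l})\geq t_0\}$. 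One checks (a)~$N_{\mathrm{last}}$ vanishes on $F^{(2)}$ (it needs $l_{d-1}>0$), and (b)~$N_{\mathrm{cross}}$ splits as $N^{\mathrm{in}}_{\mathrm{cross}}+N^{\mathrm{out}}_{\mathrm{cross}}$ according to whether the chosen zero position $k\in\{1,\ldots,d-1\}$ lies among the trailing zeros ($k\geq d-t_0+1$, preserving $\mathbf{l}$) or not ($k\leq d-t_0$, strictly increasing $t$). Consequently $F^{(t_0)}$ is $\mathcal{E}_d$-stable for $t_0\geq 2$, and on each graded piece $F^{(t_0)}/F^{(t_0+1)}$ the action of $\mathcal{E}_d-\mathrm{id}$ reduces to $N^{\mathrm{in}}_{\mathrm{cross}}$ acting on each fixed-$\mathbf{l}$ slice $V_\mathbf{l}=\mathrm{span}\{\seq{\boldsymbol{\epsilon};\mathbf{l}}:\boldsymbol{\epsilon}\in\nu_N^d\}$.

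On the complementary quotient $(\mathcal{W}^{\otimes d})_k/F^{(2)}$ (states with $t\in\{0,1\}$), cross terms from $t=1$ land in $F^{(2)}$ and vanish, so only $N_{\mathrm{last}}$ and cross $t=0\to t=1$ survive. Within $t=0$, $N_{\mathrm{last}}$ is strictly upper triangular in the $l_d$-descending ordering; any cross-then-$N_{\mathrm{last}}$ cycle can only weakly decrease $l_d$ (since the intermediate state at $t=1$ has $l_{d-1}=l_d^{\mathrm{old}}$), so combining this with the finite dimensionality of the $\boldsymbol{\epsilon}$-component should yield nilpotency on the quotient by a termination argument.

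The main obstacle is showing that $N^{\mathrm{in}}_{\mathrm{cross}}$ acts nilpotently on each slice $V_\mathbf{l}$. Since $N^{\mathrm{in}}_{\mathrm{cross}}$ preserves $\mathbf{l}$ and acts on $\boldsymbol{\epsilon}$-tuples via explicit signed ``remove-one-entry-and-append'' operations weighted by $\Theta(\langle\epsilon_k/\epsilon_{k+1};0\rangle)$, this is a nontrivial arithmetic-combinatorial claim about $\nu_N$; I expect the proof to proceed by induction on $t$ (the trailing-zero length), reducing to smaller tensor powers and exploiting the explicit formula for $\Theta(\langle\cdot;0\rangle)$ from Lemma~\ref{lem:Theta_value} together with mod-$p$ cancellations in the cyclic group $\nu_N$.
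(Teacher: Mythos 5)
Your trailing-zero filtration $F^{(t_0)}$ is a correct and useful observation: $S$ (your $N_{\mathrm{last}}$) vanishes on $F^{(2)}$ because it needs $l_{d-1}>0$, and the cross operators either preserve or strictly raise $t(\mathbf l)$. But this reduction does not finish the job, and the gap lies precisely where you plant the flag. On the graded piece $F^{(t_0)}/F^{(t_0+1)}$, the induced operator is $\sum_{i\ge d-t_0+1}(\tilde L_i-\tilde R_i)$ acting on the last $t_0$ letters of $\boldsymbol\epsilon$ while the first $d-t_0$ letters and all of $\mathbf l$ are inert. In other words, the $t_0$-th graded piece already contains, verbatim, the unipotency problem for $\sum_{i=1}^{t_0-1}(\tilde L_i-\tilde R_i)$ on $\mathcal W_1^{\otimes t_0}$. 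Your proposed ``induction on $t$'' therefore does not reduce anything: each $t_0$ presents a fresh instance of the same type of problem in a lower tensor power, the largest being $t_0=d$ (the all-zero slice $\mathbf l=\mathbf 0$, which necessarily occurs since $k\ge d$). This is exactly what Remark~\ref{rem:caseN348} singles out as the crux: for $N\in\{3,4,8\}$ the cross terms vanish because $\theta\equiv 0$ on $\mu_{N/q}$, but for $N\ge 9$ they do not, and their nilpotency \emph{is} the theorem. An appeal to ``mod-$p$ cancellations in the cyclic group $\nu_N$'' is not an argument; nothing in your sketch controls, say, the simplest dangerous orbit
\[
\seqp{\epsilon_1,\epsilon_1}\ \longmapsto\ \seqp{\epsilon_1}\otimes\theta(1)+\cdots
\]
which already forces the paper to replace $\theta$ by $\tilde\theta$ (Definition~\ref{def:tilde_theta}) before any filtration can close up.

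The paper's proof attacks this core with a completely different filtration. It indexes subspaces $V_J\subset\mathcal W^{\otimes d}$ by tuples $J=(n_1,m_1,\dots,n_{d-1},m_{d-1},n_d,l)$, where the $n_i$ label subgroups $U(n_i)\subset\mu_{N/q}$ over which one averages the $\epsilon_i$, the $m_i$ count powers of the augmentation ideal $I_G$ of $G=\mathrm{Gal}(\mathbb Q(\zeta_N)/\mathbb Q(\zeta_q))$ pushed through the cascading twists $\Phi_{g_1,\dots,g_{d-1}}$, and $l$ is a floor on $l_d$. The engine is the averaging identity $\sum_{\eta\in U(n)}\tilde\theta(\eta x)\in A(n+1)$ (Lemma~\ref{lem:theta_sum}), which converts each application of $\tilde L_i$ or $\tilde R_i$ into a strict increase of $J$ in a lexicographic order on the admissible index set $\mathcal J$ (Lemmas~\ref{lem:R_apply}, \ref{lem:L_apply}, \ref{lem:J_J'}); nilpotency then follows from the finiteness of $\{J:V_J\neq 0\}$. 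Your proposal contains no analogue of this Galois-averaging structure, which is the missing ingredient needed to handle the fixed-$\mathbf l$ slices.

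Finally, your termination argument on the quotient $(\mathcal W^{\otimes d})_k/F^{(2)}$ is also not complete as stated: a cross-then-$S$ step only \emph{weakly} decreases $l_d$ while changing $\boldsymbol\epsilon$, so you can loop with $l_d$ constant and a nontrivial orbit on $\boldsymbol\epsilon$. Breaking that loop again requires controlling the $\boldsymbol\epsilon$-dynamics, i.e., the same unaddressed ingredient. The paper sidesteps this by making $l$ a floor on $l_d$ inside the single lexicographic filtration so that $S$ and the cross operators are handled uniformly (Lemma~\ref{lem:S_apply} together with Lemma~\ref{lem:LRS_J_J'}).
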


Let us write down $\mathcal{E}_{d}$ explicitly. Note that, for $x\in\mu_{N/q}$
and $r\in\mathbb{Z}_{\geq0}$, we have 
\[
\Theta\left(\left\langle x;r\right\rangle \right)\equiv\begin{cases}
0 & \text{if }x=1\text{ or }r>0\\
\sum_{c\in\{\pm1\}}\sum_{\epsilon\in\Lambda_{c}(x)}\biseq{\epsilon}0 & \text{if }x\neq1\text{ and }r=0
\end{cases}
\]
modulo $pW^{(p)}$ by Lemma \ref{lem:Theta_value} where 
\[
\Lambda_{c}(x)=\{\epsilon\in\nu_{N}\,\mid\,\epsilon^{cp^{v(x)}}=x\}.
\]
We put
\[
\biseqp{\epsilon_{1},\dots,\epsilon_{d}}{l_{1},\dots,l_{d}}=\left(\biseq{\epsilon_{1},\dots,\epsilon_{d}}{l_{1},\dots,l_{d}}\bmod p\cdot(W^{(p)})^{\otimes d}\right)\in\mathcal{W}^{\otimes d}.
\]
For $x\in\mu_{N/q}$, we define $\theta(x)\in\mathcal{W}_{1}$ by
\[
\theta(x)=\left(\Theta\left(\left\langle x;0\right\rangle \right)\bmod pW^{(p)}\right)=\begin{cases}
\sum_{c\in\{\pm1\}}\sum_{\epsilon\in\Lambda_{c}(x)}\biseq{\epsilon}0_{p} & x\in\mu_{N/q}\setminus\{1\}\\
0 & x=1.
\end{cases}
\]
Then,
\begin{align}
\mathcal{E}_{d}\left(\biseq{\epsilon_{1},\dots,\epsilon_{d}}{l_{1},\dots,l_{d}}_{p}\right) & =\sum_{\substack{1\leq i<d\\
l_{i}=0
}
}\biseq{\epsilon_{1},\dots,\widehat{\epsilon_{i}},\dots,\epsilon_{d}}{l_{1},\dots,\widehat{l_{i}},\dots,l_{d}}_{p}\otimes\theta(\epsilon_{i}/\epsilon_{i+1})\nonumber \\
 & \quad-\sum_{\substack{1\leq i<d\\
l_{i}=0
}
}\biseq{\epsilon_{1},\dots,\widehat{\epsilon_{i+1}},\dots,\epsilon_{d}}{l_{1},\dots,\widehat{l_{i}},\dots,l_{d}}_{p}\otimes\theta(\epsilon_{i+1}/\epsilon_{i})\nonumber \\
 & \quad+\sum_{r=l_{d}}^{l_{d-1}+l_{d}}(-1)^{r-l_{d}}\binom{r}{l_{d}}\biseq{\epsilon_{1},\dots,\epsilon_{d}}{l_{1},\dots,l_{d-2},l_{d-1}+l_{d}-r,r}_{p}.\label{eq:iota_D}
\end{align}

\begin{rem}
\label{rem:caseN348}When $N\in\{3,4,8\}$, Theorem \ref{thm:strong_main}
easily follows from (\ref{eq:iota_D}) since $\theta(y)=0$ for $y\in\mu_{N/q}$.
In other word, the difficulty of the case $N\geq9$ comes from the
fact that the first and second terms of the right hand side of (\ref{eq:iota_D})
does not necessarily vanish, unlike the case of $N\in\{3,4,8\}$.
\end{rem}

\section{Lower weight cases as motivating examples}

In this section, we address the cases where the weight is either $2$
or $3$, to gain insight into the general case. Logically, aside from
a few lemmas and definitions, the content of this section is not necessary
for the proof of the main theorem. The main purpose of this section
is to make the proof of the main theorem in Section \ref{sec:Proof-of-main}
easier to understand.

\subsection{The case of weight $2$}

In this subsection, we will consider the case where the weight and
depth is $2$ of Theorem \ref{thm:strong_main}, i.e., the proof of
following proposition:
\begin{prop}
\label{prop:main_weight2}The restriction of $\mathcal{E}_{2}$ to
$\mathcal{W}_{1}\otimes\mathcal{W}_{1}$ is unipotent.
\end{prop}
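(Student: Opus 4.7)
I would begin by specializing \eqref{eq:iota_D} to $(d,l_1,l_2)=(2,0,0)$: the third sum collapses to its $r=0$ term which reproduces $\biseqp{\epsilon_1,\epsilon_2}{0,0}$, so after subtracting the identity,
\[
(\mathcal{E}_2-\mathrm{id})\!\left(\biseqp{\epsilon_1,\epsilon_2}{0,0}\right)=\biseqp{\epsilon_2}{0}\otimes\theta(\epsilon_1/\epsilon_2)-\biseqp{\epsilon_1}{0}\otimes\theta(\epsilon_2/\epsilon_1).
\]
Observing that $\theta(y^{-1})=\theta(y)$, which follows from $\Lambda_{-c}(y^{-1})=\Lambda_{c}(y)$ together with Lemma \ref{lem:Theta_value}, this simplifies to
\[
T\!\left(\biseqp{\epsilon_1,\epsilon_2}{0,0}\right)=\left(\biseqp{\epsilon_2}{0}-\biseqp{\epsilon_1}{0}\right)\otimes\theta(\epsilon_1/\epsilon_2),
\]
where $T:=\mathcal{E}_2|_{\mathcal{W}_1\otimes\mathcal{W}_1}-\mathrm{id}$. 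Both tensor factors lie in the augmentation ideal $\mathfrak{m}\subset\mathcal{W}_1$: the first trivially, and the second because $\theta(\epsilon_1/\epsilon_2)$ is a sum of $p^{v(\epsilon_1/\epsilon_2)}$ distinct basis vectors and $v(\epsilon_1/\epsilon_2)\ge v_p(q)\ge 1$, so the augmentation is $\equiv 0\pmod p$.

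Next, I would use the $\mu_{N/q}$-action on $\nu_{N}$ by multiplication to identify $\mathcal{W}_1\cong\mathbb{F}_p[\mu_{N/q}]\cong\mathbb{F}_p[g]/(g-1)^{p^M}$, so that $\mathfrak{m}=(g-1)$ is the unique (nilpotent of index $p^M$) maximal ideal. Under this identification $\theta(\epsilon_1/\epsilon_2)$ is, up to a unit translate, the norm element of the unique subgroup of order $p^{v}$ (where $v=v(\epsilon_1/\epsilon_2)$), which equals $(g-1)^{p^M-p^{M-v}}$; and, writing $\epsilon_i=\zeta_N^{1+j_iq}$, one has $\biseqp{\epsilon_2}{0}-\biseqp{\epsilon_1}{0}=g^{j_1}(g^{j_2-j_1}-1)\in\mathfrak{m}^{p^{w}}$ with $w=v_p(j_1-j_2)$. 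Thus $T$ places its output into specific pieces of the bi-filtration $\mathfrak{m}^{\bullet}\otimes\mathfrak{m}^{\bullet}$ on $\mathcal{W}_1\otimes\mathcal{W}_1$.

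The plan is then to show that iterating $T$ pushes the output strictly deeper into the bi-filtration, so some power $T^k$ must land in $\mathfrak{m}^{p^M}\otimes\mathcal{W}_1=0$, giving nilpotency of $T$ and hence unipotency of $\mathcal{E}_2|_{\mathcal{W}_1\otimes\mathcal{W}_1}$. The hard part will be this strict monotonicity under iteration: a term-by-term bound is too weak because, when $T$ is applied to an element of $\mathfrak{m}^a\otimes\mathfrak{m}^b$, individual monomial summands in the expansion can land at smaller bi-depth. The required monotonic depth increase emerges only after summing over the full coset support of $\theta(\epsilon_1/\epsilon_2)$ and invoking the $\mathbb{F}_p$-linear cancellations produced by this averaging. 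This combinatorial bookkeeping is the technical core of the argument, and it foreshadows the more elaborate version needed for general $k$ and $d$ in Section \ref{sec:Proof-of-main}.
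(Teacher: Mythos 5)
Your opening computation is correct: specializing \eqref{eq:iota_D} to $(d,l_1,l_2)=(2,0,0)$ and using the symmetry $\theta(y^{-1})=\theta(y)$ (which indeed follows from $\Lambda_c(y^{-1})=\Lambda_{-c}(y)$) gives
\[
T\bigl(\biseqp{\epsilon_1,\epsilon_2}{0,0}\bigr)=\bigl(\biseqp{\epsilon_2}{0}-\biseqp{\epsilon_1}{0}\bigr)\otimes\theta(\epsilon_1/\epsilon_2),
\]
and your observations about $\nu_N\cong\mu_{N/q}$, about $\theta(\epsilon_1/\epsilon_2)$ being a unit translate of the norm element $(g-1)^{p^M-p^{M-v}}$, and about $g^{j_2}-g^{j_1}\in\mathfrak m^{p^w}$ are all accurate. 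Collapsing the two operators into a single $T$ via $\theta(y)=\theta(y^{-1})$ is a genuinely nice simplification that the paper does not exploit (the paper instead proves the stronger claim that every word in $\tilde L$ and $\tilde R$ annihilates). However, the proposal stops exactly where the proof must begin: you concede that ``a term-by-term bound is too weak'' and that the needed ``monotonic depth increase emerges only after summing\ldots and invoking the $\mathbb{F}_p$-linear cancellations,'' which names the statement to be proved rather than proving it.

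The deeper problem is that the invariant you chose to track, the bi-filtration $\mathfrak m^a\otimes\mathfrak m^b$, is not the right one, and the paper's worked $N=27$ example shows why. There one sees that $\tilde L$ fails to carry $\mathcal W_1\otimes\mathfrak m^{p^M-p^{M-n}}$ into $\mathcal W_1\otimes\mathfrak m^{p^M-p^{M-(n+1)}}$, and the leftover terms do not cancel when passing to $T=\tilde L-\tilde R$; by \eqref{eq:RVmn} and \eqref{eq:LVmn} one gets $T(V(0,n))\subset V(0,n+1)+V(1,1)$, and the $V(1,1)$ contribution is irreducibly there. The paper's fix is the family $V(m,n)$, obtained by applying $(\sigma-1)^m$ \emph{diagonally} to $\mathcal W_1\otimes A(n)$. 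Under your identification $\sigma$ acts as $g\otimes g$, so $(\sigma-1)^m=\bigl((g-1)\otimes g+1\otimes(g-1)\bigr)^m$ is a genuinely mixed operator, and $V(m,n)$ is not expressible as $\mathfrak m^a\otimes\mathfrak m^b$. Without introducing this diagonal $I_G$-twist (or an equivalent refinement of your filtration), the ``combinatorial bookkeeping'' you defer cannot be carried out, so the proposal has a genuine gap at its core step.
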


Hereafter, we simply write $\seqp{\epsilon_{1},\dots,\epsilon_{d}}$
for $\biseqp{\epsilon_{1},\dots,\epsilon_{d}}{0,\dots,0}$. By (\ref{eq:iota_D}),
for $u\in\mathcal{W}_{1}\otimes\mathcal{W}_{1}$, we have
\[
(\mathcal{E}_{2}-\mathrm{id})(u)=L(u)-R(u)
\]
where
\begin{align*}
L\left(\seqp{\epsilon_{1},\epsilon_{2}}\right) & =\seqp{\epsilon_{2}}\otimes\theta(\epsilon_{1}/\epsilon_{2}),\\
R\left(\seqp{\epsilon_{1},\epsilon_{2}}\right) & =\seqp{\epsilon_{1}}\otimes\theta(\epsilon_{2}/\epsilon_{1}).
\end{align*}
Now, one may expect that the following stronger claim holds:
\[
\forall u\in\mathbb{F}_{p}\otimes W_{1}\otimes W_{1},\exists n,\forall O_{1},\dots,O_{n}\in\{L,R\},\ O_{1}\circ\cdots\circ O_{n}(u)\stackrel{?}{=}0.
\]
However, unfortunately, this claim is not necessarily true since
\begin{align*}
R\left(\seqp{\zeta_{9}^{1},\zeta_{9}^{1}}+\seqp{\zeta_{9}^{1},\zeta_{9}^{4}}+\seqp{\zeta_{9}^{1},\zeta_{9}^{7}}\right) & =\seqp{\zeta_{9}^{1}}\otimes\left(\theta(\zeta_{9}^{0})+\theta(\zeta_{9}^{3})+\theta(\zeta_{9}^{6})\right)\\
 & =2\left(\seqp{\zeta_{9}^{1},\zeta_{9}^{1}}+\seqp{\zeta_{9}^{1},\zeta_{9}^{4}}+\seqp{\zeta_{9}^{1},\zeta_{9}^{7}}\right)
\end{align*}
for $N=9$. Thus, we introduce the modified version of $\theta$.
\begin{defn}
\label{def:tilde_theta}For $x\in\mu_{N/q}$, define
\[
\tilde{\theta}(x)\in\mathcal{W}_{1}
\]
by
\[
\tilde{\theta}(x)=\begin{cases}
\theta(x) & x\neq1,\\
-\sum_{y\in\mu_{N/q}}\theta(y) & x=1.
\end{cases}
\]
\end{defn}

\begin{rem}
More explicitly, we have
\[
\tilde{\theta}(1)=\begin{cases}
0 & N=4\cdot2^{M},\\
M\sum_{\epsilon\in\mu_{N}}\seqp{\epsilon} & N=3\cdot3^{M}.
\end{cases}
\]
\end{rem}

Now, let us define the modified versions of $L$ and $R$ by
\begin{align*}
\tilde{L}\left(\seqp{\epsilon_{1},\epsilon_{2}}\right) & =\seqp{\epsilon_{2}}\otimes\tilde{\theta}(\epsilon_{1}/\epsilon_{2}),\\
\tilde{R}\left(\seqp{\epsilon_{1},\epsilon_{2}}\right) & =\seqp{\epsilon_{1}}\otimes\tilde{\theta}(\epsilon_{2}/\epsilon_{1}).
\end{align*}
Note that, we also have
\[
(\mathcal{E}_{2}-\mathrm{id})(u)=\tilde{L}(u)-\tilde{R}(u)
\]
for $u\in\mathcal{W}_{1}\otimes\mathcal{W}_{1}$. We will prove the
following refinement of Proposition \ref{prop:main_weight2}.
\begin{prop}
\label{prop:LR_apply_w2}For any $u\in\mathcal{W}_{1}\otimes\mathcal{W}_{1}$,
there exists $n\geq0$ such that, for any $O_{1},\dots,O_{n}\in\{\tilde{L},\tilde{R}\}$,
we have
\[
O_{1}\circ\cdots\circ O_{n}(u)=0.
\]
In other words, if we apply $\tilde{L}$ or $\tilde{R}$ repeatedly,
then any element of $\mathcal{W}_{1}\otimes\mathcal{W}_{1}$ becomes
$0$ eventually.
\end{prop}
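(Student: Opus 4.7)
The plan is to translate everything into the language of the group algebra $\mathbb{F}_p[G]$ with $G = \mu_{p^M}$: via $\zeta_N g \leftrightarrow g$ one identifies $\mathcal{W}_1 \cong \mathbb{F}_p[G]$, hence $\mathcal{W}_1 \otimes \mathcal{W}_1 \cong \mathbb{F}_p[G \times G]$, with augmentation ideals $I_1, I_2$ of the two factors and total augmentation $J = I_1 + I_2$. Note that $J^{2p^M - 1} = 0$. In these coordinates the operators read
\[
\tilde{L}(\xi_1^a \xi_2^b) = \xi_1^b T_{a-b}(\xi_2), \qquad \tilde{R}(\xi_1^a \xi_2^b) = \xi_1^a T_{b-a}(\xi_2),
\]
where $T_j := \tilde\theta(\xi^j) \in \mathbb{F}_p[G]$. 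The proposition will follow from the \textbf{main claim} that $\tilde{L}(J^k), \tilde{R}(J^k) \subset J^{k+1}$ for every $k \ge 0$: any word of length $2p^M - 1$ then sends every element of $\mathcal{W}_1 \otimes \mathcal{W}_1$ into $J^{2p^M-1} = 0$.

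The main claim rests on two ingredients. The first is the \emph{depth estimate} $T_j \in (\xi-1)^{p^M - p^{M-v(\xi^j)}}\mathbb{F}_p[G] \subset I^{p^M - p^{M-1}}$, obtained by combining the explicit formula of Lemma \ref{lem:Theta_value} with the Frobenius identity $\xi^{p^s}-1 = (\xi-1)^{p^s}$ in characteristic $p$. The second is the \emph{trace identity} $\sum_j T_j = 0$, immediate from Definition \ref{def:tilde_theta} and $\theta(1)=0$. To check the filtration property at a given level, I would expand $\tilde{L}$ on a basis monomial: a direct calculation together with the Vandermonde identity gives
\[
\tilde{L}\bigl((\xi_1-1)^\alpha(\xi_2-1)^\beta\bigr) = \sum_j (-1)^{\alpha+\beta-j} P_j(\xi_1)\, T_j(\xi_2),
\]
with $P_j(\xi_1) = \sum_b \binom{\alpha}{b+j}\binom{\beta}{b}\xi_1^b$ and $P_j(1) = \binom{\alpha+\beta}{\beta+j}$. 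Splitting $P_j = P_j(1) + (\xi_1-1)Q_j$ isolates a tail $(\xi_1-1)Q_j T_j \in I_1 \cdot I_2^{p^M - p^{M-1}}$ and a $\xi_1$-constant finite-difference combination $\sum_j (-1)^{\alpha+\beta-j}\binom{\alpha+\beta}{\beta+j} T_j \in \mathbb{F}_p[G]$ which must be shown to lie in $I_2^{\alpha+\beta+1}$.

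The main obstacle is precisely this finite-difference vanishing. I plan to handle it by stratifying the sum according to the valuation $v(\xi^j) \in \{1,\dots,M\}$: within each stratum the depth estimate says that $T_j$ has the uniform leading term $(\xi-1)^{p^M - p^{M-v}}$ up to a unit depending on $j$, and one verifies that the trace identity together with combinatorial identities on $\binom{k}{\cdot} \pmod p$ (of Lucas type) force the stratum contributions to cancel to the required order in $I_2$. The range $\alpha+\beta \le p^M - p^{M-1}$ is automatic from the depth estimate alone, and the range $\alpha+\beta$ close to $2(p^M-1)$ is near-vacuous since $J^k$ is then already tiny; the delicate combinatorics lives in the intermediate range, which is where the bulk of the work will go.
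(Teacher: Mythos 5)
The central assertion of your plan---that $\tilde{L}(J^k),\tilde{R}(J^k)\subset J^{k+1}$---is never established, and you acknowledge as much by deferring the ``finite-difference vanishing'' $\sum_j(-1)^{\alpha+\beta-j}\binom{\alpha+\beta}{\beta+j}T_j\in I_2^{\alpha+\beta+1}$ to unwritten ``delicate combinatorics.'' That vanishing is precisely where the entire content of the proposition lives, so as written this is an outline, not a proof. There is also a secondary gap you seem to have overlooked: the tail $(\xi_1-1)Q_jT_j$ lies in $I_1\cdot I_2^{p^M-p^{M-1}}\subset J^{1+p^M-p^{M-1}}$, which is contained in $J^{\alpha+\beta+1}$ only when $\alpha+\beta\leq p^M-p^{M-1}$; in the very ``intermediate range'' you flag as the hard part, the tail needs a refined estimate too, not just the $\xi_1$-constant part. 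And the claim that the top end $\alpha+\beta$ near $2(p^M-1)$ is ``near-vacuous'' is backwards: there $J^{\alpha+\beta+1}$ is tiny or zero, so the statement to prove is \emph{strongest}, not weakest (e.g.\ for $\alpha=\beta=p^M-1$ you must show $\tilde{L}(u_1^{p^M-1}u_2^{p^M-1})=0$ exactly; this does hold by the trace identity, but it must be argued).

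It is also worth recording that your route genuinely differs from the paper's, and arguably attacks a harder target. The paper does not use the full $J$-adic filtration; it introduces a two-parameter family $V(m,n)$, where $m$ tracks powers of the \emph{diagonal} augmentation ideal $I_G^m$ and $n$ tracks $U(n)$-averaging in the second tensor factor, and shows $\tilde{L},\tilde{R}$ strictly increase the lexicographic order on $(m,n)$. The engine is Lemma~\ref{lem:theta_sum}, which in your coordinates says $\sum_{j\equiv m\ (p^{M-n})}T_j\in I^{p^M-p^{M-n-1}}$ for every $n$---a genuine interpolation between your two named inputs, the depth estimate ($n=0$) and the trace identity ($n=M$). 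You propose to keep only the two endpoints and recover the intermediate cancellations from Lucas-type binomial identities; numerically this checks out on small cases, and the $J$-adic claim may well be true (it would give the cleaner bound $2p^M-1$ on word length), but it is not derived from the paper's lemma by any obvious formal manipulation, and you would effectively be reproving a strengthening of Lemma~\ref{lem:theta_sum} adapted to binomial-weighted rather than coset sums. Until that step is actually carried out, the proof has a real hole.
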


First, let us consider the case where we apply only $\tilde{R}$ repeatedly.
\begin{example}
\label{exa:wt2_R}When $N=729=3^{6}$ and $u=\seqp{\zeta_{729}^{1},\zeta_{729}^{100}}$,
we have
\begin{align*}
\tilde{R}\left(u\right) & =\seqp{\zeta_{729}^{1}}\otimes\tilde{\theta}(\zeta_{729}^{99})=\sum_{i=0}^{9-1}\seqp{\zeta_{729}^{1},\zeta_{729}^{70+81i}},\\
\tilde{R}\circ\tilde{R}(u) & =\sum_{i=0}^{9-1}\seqp{\zeta_{729}^{1}}\otimes\tilde{\theta}(\zeta_{729}^{69+81i})=\sum_{i=0}^{27-1}\seqp{\zeta_{729}^{1},\zeta_{729}^{4+27i}},\\
\tilde{R}\circ\tilde{R}\circ\tilde{R}(u)= & \sum_{i=0}^{27-1}\seqp{\zeta_{729}^{1}}\otimes\tilde{\theta}(\zeta_{729}^{3+27i})=\sum_{i=0}^{81-1}\seqp{\zeta_{729}^{1},\zeta_{729}^{1+9i}},
\end{align*}
\[
\tilde{R}\circ\tilde{R}\circ\tilde{R}\circ\tilde{R}(u)=\sum_{i=0}^{81-1}\seqp{\zeta_{729}^{1}}\otimes\tilde{\theta}(\zeta_{729}^{9i}).
\]
Here,
\begin{align*}
\sum_{i=0}^{81-1}\tilde{\theta}(\zeta_{729}^{9i}) & =\tilde{\theta}(1)+\tilde{\theta}(\zeta_{729}^{243})+\tilde{\theta}(\zeta_{729}^{-243})+\sum_{i=0}^{3-1}\tilde{\theta}(\zeta_{729}^{81+243i})+\sum_{i=0}^{3-1}\tilde{\theta}(\zeta_{729}^{-81+243i})\\
 & \quad+\sum_{i=0}^{9-1}\tilde{\theta}(\zeta_{729}^{27+81i})+\sum_{i=0}^{9-1}\tilde{\theta}(\zeta_{729}^{-27+81i})+\sum_{i=0}^{27-1}\tilde{\theta}(\zeta_{729}^{9+27i})+\sum_{i=0}^{27-1}\tilde{\theta}(\zeta_{729}^{-9+27i}).\\
 & =(-1+8)\sum_{i=0}^{243-1}\seqp{\zeta_{729}^{1+3i}}=\sum_{i=0}^{243-1}\seqp{\zeta_{729}^{1+3i}}.
\end{align*}
Thus,
\[
\tilde{R}\circ\tilde{R}\circ\tilde{R}\circ\tilde{R}(u)=\sum_{i=0}^{243-1}\seqp{\zeta_{729}^{1},\zeta_{729}^{1+3i}}.
\]
Finally, we have
\[
\tilde{R}\circ\tilde{R}\circ\tilde{R}\circ\tilde{R}\circ\tilde{R}(u)=\sum_{i=0}^{243-1}\seqp{\zeta_{729}^{1}}\otimes\tilde{\theta}(\zeta_{729}^{3i})=0
\]
since 
\[
\sum_{i=0}^{243-1}\tilde{\theta}(\zeta_{729}^{3i})=\sum_{x\in\mu_{N/q}}\tilde{\theta}(x)=0.
\]
\end{example}

For $0\leq n\leq M$, define a subgroup $U(n)$ of $\mu_{N/q}$ by
$U(n)=\mu_{p^{n}}$. Note that $\{1\}=U(0)\subset\cdots\subset U(M)=\mu_{N/q}$.
Based on Example \ref{exa:wt2_R}, for $n\in\mathbb{Z}_{\geq0}$,
define the subspace
\[
A(n)\subset\mathcal{W}_{1}
\]
and 
\[
V(n)\subset\mathcal{W}_{1}\otimes\mathcal{W}_{1}
\]
by
\[
A(n)=\begin{cases}
\mathrm{span}_{\mathbb{F}_{p}}\left\{ \left.\sum_{\eta\in U(n)}\seqp{\eta\epsilon}\,\right|\,\epsilon\in\nu_{N}\right\}  & n\leq M\\
\{0\} & n>M
\end{cases}
\]
and
\[
V(n)=\mathcal{W}_{1}\otimes A(n).
\]

\begin{example}
Let $N=729$ and $u=\seqp{\zeta_{729}^{1},\zeta_{729}^{100}}$ be
the same as in Example \ref{exa:wt2_R}. Then all subgroups of $\mu_{N/q}$
are given by
\[
\{1\}=\mu_{1}\subset\mu_{3}\subset\mu_{9}\subset\mu_{27}\subset\mu_{81}\subset\mu_{243}=\mu_{N/q}.
\]
Then,
\[
\tilde{R}\left(u\right)=\sum_{i=0}^{9-1}\seqp{\zeta_{729}^{1},\zeta_{729}^{70+81i}}\in V(2),
\]
\[
\tilde{R}\circ\tilde{R}(u)=\sum_{i=0}^{27-1}\seqp{\zeta_{729}^{1},\zeta_{729}^{4+27i}}\in V(3),
\]
\[
\tilde{R}\circ\tilde{R}\circ\tilde{R}(u)=\sum_{i=0}^{81-1}\seqp{\zeta_{729}^{1},\zeta_{729}^{1+9i}}\in V(4),
\]
\[
\tilde{R}\circ\tilde{R}\circ\tilde{R}\circ\tilde{R}(u)=\sum_{i=0}^{243-1}\seqp{\zeta_{729}^{1},\zeta_{729}^{1+3i}}\in V(5),
\]
and
\[
\tilde{R}\circ\tilde{R}\circ\tilde{R}\circ\tilde{R}\circ\tilde{R}(u)=0\in V(6).
\]
\end{example}

As suggested by the example above, for a general $N$, we have
\begin{equation}
\tilde{R}(V(n))\subset V(n+1)\label{eq:w2_R_V}
\end{equation}
for $n\in\mathbb{Z}_{\geq0}$. This is a consequence of
\[
\tilde{R}\left(\sum_{\eta\in U(n)}\seqp{\epsilon_{1},\eta\epsilon_{2}}\right)=\seqp{\epsilon_{1}}\otimes\sum_{\eta\in U(n)}\tilde{\theta}(\eta\epsilon_{2}/\epsilon_{1})
\]
and the following lemma.
\begin{lem}
\label{lem:theta_sum}For $x\in\zeta_{N/q}$ and $n\in\{0,\dots,M\}$,
we have
\[
\sum_{\eta\in U(n)}\tilde{\theta}(\eta x)\in A(n+1).
\]
\end{lem}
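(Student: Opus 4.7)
The plan is to prove that $g_n(x) := \sum_{\eta \in U(n)} \tilde{\theta}(\eta x)$ is invariant under the natural $U(n+1)$-action on $\mathcal{W}_1$, where $\eta' \in U(n+1) \subset \mu_{N/q}$ acts on the basis $\nu_N$ by multiplication (which preserves $\nu_N$ since $U(n+1) \subset \mu_{N/q}$ for $n+1 \leq M$). Because this action on $\nu_N$ is free, the space of $U(n+1)$-invariants in $\mathcal{W}_1$ is exactly the span of orbit sums $\sum_{\eta \in U(n+1)} \seqp{\eta\epsilon}$, which is $A(n+1)$ by definition. Hence invariance gives $g_n(x) \in A(n+1)$.

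First I would dispose of the boundary case $n = M$ by direct computation: the sum collapses to $g_M(x) = \sum_{y \in \mu_{N/q}} \tilde{\theta}(y) = \tilde{\theta}(1) + \sum_{y \neq 1} \theta(y) = -\theta(1) = 0 \in A(M+1) = \{0\}$.

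For $n < M$ and $\eta' \in U(n+1)$, I would decompose $\theta(y) = \Theta_+(y) + \Theta_-(y)$ with $\Theta_\pm(y) := \sum_{\epsilon \in \Lambda_\pm(y)} \seqp{\epsilon}$ and establish the transformation rule
\[
\eta' \cdot \Theta_\pm(y) = \Theta_\pm(\eta'^{\pm p^{v(y)}} y),
\]
which follows from the defining condition $\epsilon^{\pm p^{v(y)}} = y$ together with the valuation identity $v(\eta'^{\pm p^{v(y)}} y) = v(y)$. The valuation identity is the main technical step: for $\eta' \neq 1$ in $U(n+1)$ one has $v(\eta') \geq v_p(q) + M - n - 1$, which is strictly positive when $n < M$, hence $v(\eta'^{p^{v(y)}}) \geq v(\eta') + v(y) > v(y)$. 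Then I would stratify the summation defining $g_n(x)$ by $v_0 := v(\eta x)$ and check invariance case by case. For $v_0 \geq n+1$, $\eta'^{p^{v_0}} = 1$ automatically, so $\eta' \cdot \Theta_\pm(\eta x) = \Theta_\pm(\eta x)$. For $1 \leq v_0 \leq n$, the maps $\eta \mapsto \eta'^{\pm p^{v_0}} \eta$ (with $\eta'^{\pm p^{v_0}} \in U(n+1-v_0) \subset U(n)$) bijectively permute the level set $\{\eta \in U(n) : v(\eta x) = v_0\}$ and send $\Theta_\pm(\eta x)$ to $\Theta_\pm(\eta'^{\pm p^{v_0}} \eta x)$, so the level-wise sums match. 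Finally, for the possibly present term $\eta x = 1$ (when $x \in U(n)$), the identity $\eta' \cdot \tilde{\theta}(1) = \tilde{\theta}(1)$ follows from $\tilde{\theta}(1) = -\sum_{z \in \mu_{N/q}} \theta(z)$ by applying the same level-wise bijections to $\mu_{N/q} \setminus \{1\}$. Summing all matching contributions gives $\eta' \cdot g_n(x) = g_n(x)$, as required.

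The main obstacle is the strict valuation inequality $v(\eta'^{p^{v(y)}}) > v(y)$ needed to preserve the valuation under translation; this is what fails at $n = M$ and necessitates the separate direct argument there. Once this inequality is in hand, the rest of the proof reduces to careful bookkeeping of the level-set bijections, and the three cases combine to yield $U(n+1)$-invariance of $g_n(x)$.
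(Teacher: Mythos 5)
Your proof is correct, and it takes a genuinely different route from the paper. The paper argues by a three-way case split: for $n<M$ and $x\notin U(n)$ it uses that $v(\eta x)=v(x)$ is constant across $\eta\in U(n)$ and directly rewrites $\sum_\eta\theta(\eta x)$ as an orbit sum under $U(n+v(x))$ (getting the stronger conclusion $A(n+v(x))$); for $n<M$ and $x\in U(n)$ it invokes the simply-transitive action of $G=\mathrm{Gal}(\mathbb{Q}(\zeta_N)/\mathbb{Q}(\zeta_q))$ on $\nu_N$ to conclude all coefficients are equal; and for $n=M$ it computes $\sum_{y\in\mu_{N/q}}\tilde{\theta}(y)=0$. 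You instead observe that $A(n+1)$ is precisely the $U(n+1)$-fixed subspace of $\mathcal{W}_1$ (since $U(n+1)$ acts freely on $\nu_N$) and prove $U(n+1)$-invariance of $g_n(x)$ directly, by a transformation rule $\eta'\cdot\Theta_{\pm}(y)=\Theta_{\pm}(\eta'^{\pm p^{v(y)}}y)$ and a stratification by $v_0=v(\eta x)$; this unifies the two $n<M$ sub-cases of the paper into a single argument and makes the role of the condition $n<M$ transparent (it guarantees $v(\eta')\geq v_p(q)+M-n-1\geq v_p(q)\geq 1$ for $\eta'\in U(n+1)\setminus\{1\}$, the key input to the valuation identity). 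The trade-off is that the paper's direct computation in case $x\notin U(n)$ yields the sharper containment $A(n+v(x))$, which your invariance argument does not see, while your argument gains conceptual clarity by reducing the statement to a fixed-point computation. Two minor points worth tightening in a write-up: the inequality $v(\eta')\geq v_p(q)+M-n-1$ is ``strictly positive when $n<M$'' only because $v_p(q)\geq 1$ (i.e.\ $p\mid q$), which should be stated explicitly since $M-n-1$ alone can be $0$; and you should record that all relevant valuations satisfy $v(\eta x)\geq v_p(q)\geq 1$ (since $\eta x\in\mu_{N/q}$), so the stratification over $v_0\geq 1$ indeed covers all terms with $\eta x\neq 1$.
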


\begin{proof}
(i) The case $n<M$ and $x\notin U(n)$. Since $x\notin U(n)$,
\[
v(\eta x)=v(x)
\]
for all $\eta\in U(n)$. Therefore
\begin{align*}
\sum_{\eta\in U(n)}\tilde{\theta}(\eta x) & =\sum_{\eta\in U(n)}\sum_{\substack{\epsilon\in\nu_{N}\\
\epsilon^{p^{v(x)}}=\eta x
}
}\seqp{\epsilon}+\sum_{\eta\in U(n)}\sum_{\substack{\epsilon\in\nu_{N}\\
\epsilon^{p^{v(x)}}=(\eta x)^{-1}
}
}\seqp{\epsilon}\\
 & =\sum_{\substack{\epsilon\in\nu_{N}\\
\epsilon^{p^{n+v(x)}}=x^{p^{n}}
}
}\seqp{\epsilon}+\sum_{\substack{\epsilon\in\nu_{N}\\
\epsilon^{p^{n+v(x)}}=x^{-p^{n}}
}
}\seqp{\epsilon}\\
 & \in A(n+v(x)).
\end{align*}
Here, $v(x)\geq1$ since $x\in\zeta_{N/q}$. Thus,
\[
\sum_{\eta\in U(n)}\tilde{\theta}(\eta x)\in A(n+1).
\]

(ii) The case $n<M$ and $x\in U(n)$. In this case, we have
\[
\sum_{\eta\in U(n)}\tilde{\theta}(\eta x)=\sum_{\eta\in U(n)}\tilde{\theta}(\eta).
\]
Let $c_{\epsilon}\in\mathbb{F}_{p}$ ($\epsilon\in\nu_{N}$) be the
coefficients of $\seqp{\epsilon}$ in $\sum_{\eta\in U(n)}\tilde{\theta}(\eta)$.
Then $c_{\epsilon}=c_{\epsilon'}$ for all $\epsilon,\epsilon'$ from
the symmetry. Thus,
\[
\sum_{\eta\in U(n)}\tilde{\theta}(\eta\epsilon_{2}/\epsilon_{1})\in A(M)\subset A(n+1).
\]

(iii) The case $n=M$. In this case, we have
\[
\sum_{\eta\in U(n)}\tilde{\theta}(\eta x)=\sum_{\eta\in\mu_{N/q}}\tilde{\theta}(\eta)=0
\]
by definition of $\tilde{\theta}$.

Thus the claim is proved.
\end{proof}
Now, (\ref{eq:w2_R_V}) completes the proof for the case where we
apply only $\tilde{R}$.

Now, let us consider the general case. Unfortunately, as the following
example indicates, $\tilde{L}(V(n))\subset V(n+1)$ is not necessarily
true. Let $N=27$ and 
\[
u=\seqp{\zeta_{27}^{1},\zeta_{27}^{4}}+\seqp{\zeta_{27}^{1},\zeta_{27}^{13}}+\seqp{\zeta_{27}^{1},\zeta_{27}^{22}}\in V(1).
\]
Then
\begin{align*}
\tilde{L}(u) & =\seqp{\zeta_{27}^{4},\zeta_{27}^{1}}+\seqp{\zeta_{27}^{4},\zeta_{27}^{10}}+\seqp{\zeta_{27}^{4},\zeta_{27}^{19}}\\
 & \quad+\seqp{\zeta_{27}^{13},\zeta_{27}^{4}}+\seqp{\zeta_{27}^{13},\zeta_{27}^{13}}+\seqp{\zeta_{27}^{13},\zeta_{27}^{22}}\\
 & \quad+\seqp{\zeta_{27}^{22},\zeta_{27}^{7}}+\seqp{\zeta_{27}^{22},\zeta_{27}^{16}}+\seqp{\zeta_{27}^{22},\zeta_{27}^{25}}\notin V(2).
\end{align*}
However, in this case, we have
\begin{align*}
\seqp{\zeta_{27}^{4},\zeta_{27}^{1}}+\seqp{\zeta_{27}^{4},\zeta_{27}^{10}}+\seqp{\zeta_{27}^{4},\zeta_{27}^{19}} & =\seqp{\zeta_{27},\zeta_{27}^{7}}+\seqp{\zeta_{27},\zeta_{27}^{16}}+\seqp{\zeta_{27},\zeta_{27}^{25}}\\
 & \quad+(\sigma_{4}-\mathrm{id})\left(\seqp{\zeta_{27},\zeta_{27}^{7}}+\seqp{\zeta_{27},\zeta_{27}^{16}}+\seqp{\zeta_{27},\zeta_{27}^{25}}\right),\\
\seqp{\zeta_{27}^{13},\zeta_{27}^{4}}+\seqp{\zeta_{27}^{13},\zeta_{27}^{13}}+\seqp{\zeta_{27}^{13},\zeta_{27}^{22}} & =\seqp{\zeta_{27},\zeta_{27}^{1}}+\seqp{\zeta_{27},\zeta_{27}^{10}}+\seqp{\zeta_{27},\zeta_{27}^{19}}\\
 & \quad+(\sigma_{13}-\mathrm{id})\left(\seqp{\zeta_{27},\zeta_{27}^{1}}+\seqp{\zeta_{27},\zeta_{27}^{10}}+\seqp{\zeta_{27},\zeta_{27}^{19}}\right),\\
\seqp{\zeta_{27}^{22},\zeta_{27}^{7}}+\seqp{\zeta_{27}^{22},\zeta_{27}^{16}}+\seqp{\zeta_{27}^{22},\zeta_{27}^{25}} & =\seqp{\zeta_{27},\zeta_{27}^{4}}+\seqp{\zeta_{27},\zeta_{27}^{13}}+\seqp{\zeta_{27},\zeta_{27}^{22}}\\
 & \quad+(\sigma_{22}-\mathrm{id})\left(\seqp{\zeta_{27},\zeta_{27}^{4}}+\seqp{\zeta_{27},\zeta_{27}^{13}}+\seqp{\zeta_{27},\zeta_{27}^{22}}\right),
\end{align*}
where $\sigma_{a}$ is the automorphism of $\mathcal{W}_{1}\otimes\mathcal{W}_{1}$
defined by 
\[
\sigma_{a}\left(\seqp{\epsilon_{1},\epsilon_{2}}\right)=\seqp{\epsilon_{1}^{a},\epsilon_{2}^{a}}.
\]
Thus,
\begin{align}
\tilde{L}(u) & =\sum_{i=0}^{9-1}\seqp{\zeta_{27},\zeta_{27}^{1+3i}}+(\sigma_{4}-\mathrm{id})\sum_{i=0}^{3-1}\seqp{\zeta_{27},\zeta_{27}^{7+9i}}\nonumber \\
 & \quad+(\sigma_{13}-\mathrm{id})\sum_{i=0}^{3-1}\seqp{\zeta_{27},\zeta_{27}^{1+9i}}+(\sigma_{22}-\mathrm{id})\sum_{i=0}^{3-1}\seqp{\zeta_{27},\zeta_{27}^{4+9i}}.\label{eq:ex_Lu}
\end{align}
Based on this example, for nonnegative integers $m$ and $n$, define
the subspaces $V(m,n)\subset\mathcal{W}_{1}\otimes\mathcal{W}_{1}$
as follows. Let 
\[
G=\mathrm{Gal}(\mathbb{Q}(\zeta_{N})/\mathbb{Q}(\zeta_{q}))
\]
be the Galois group for the extension $\mathbb{Q}(\zeta_{N})/\mathbb{Q}(\zeta_{q})$.
Note that $G\simeq\{a\in(\mathbb{Z}/N\mathbb{Z})^{\times}\,\mid\,a\equiv1\pmod{q}\}$.
Furthermore, define the action of $\sigma\in G$ on $\mathcal{W}^{\otimes d}$
by
\[
\sigma\left(\biseqp{\epsilon_{1},\dots,\epsilon_{d}}{l_{1},\dots,l_{d}}\right)=\biseqp{\sigma(\epsilon_{1}),\dots,\sigma(\epsilon_{d})}{l_{1},\dots,l_{d}}.
\]
Furthermore, we denote by $I_{G}\subset\mathbb{F}_{p}[G]$ the augmentation
ideal of the group algebra $\mathbb{F}_{p}[G]$. Then, define $V(m,n)$
by $V(m,n)=\{0\}$ for $n>M$ and
\[
V(m,n)=\mathrm{span}_{\mathbb{F}_{p}}\left\{ \left.\sum_{\eta\in U(n)}g\left(\seqp{\epsilon_{1},\eta\epsilon_{2}}\right)\,\right|\,g\in I_{G}^{m},\,\epsilon_{1},\epsilon_{2}\in\nu_{N}\right\} \qquad(n\leq M).
\]
Note that $V(0,n)=V(n)$. For example, (\ref{eq:ex_Lu}) implies $\tilde{L}(u)\in V(0,2)+V(1,1)$.
Generally, for nonnegative integers $m$ and $n$, we have
\begin{equation}
\tilde{R}(V(m,n))\subset V(m,n+1)\label{eq:RVmn}
\end{equation}
and
\begin{equation}
\tilde{L}(V(m,n))=V(m,n+1)+V(m+1,1).\label{eq:LVmn}
\end{equation}
Here, (\ref{eq:RVmn}) follows from
\begin{align*}
\tilde{R}\left(\sum_{\eta\in U(n)}g\left(\seqp{\epsilon_{1},\eta\epsilon_{2}}\right)\right) & =g\left(\seqp{\epsilon_{1}}\otimes\sum_{\eta\in U(n)}\tilde{\theta}(\eta\epsilon_{2}/\epsilon_{1})\right)\\
 & \in g\left(\seqp{\epsilon_{1}}\otimes A(n+1)\right)\qquad(\text{by Lemma \ref{lem:theta_sum}}).
\end{align*}
To check (\ref{eq:LVmn}), let us introduce subgroups of $G$. Note
that $G$ is non-canonically isomorphic to $\mathbb{Z}/p^{M}\mathbb{Z}$.
Thus there are $M+1$ subgroups of $G$. Let
\[
\{1\}=G(0)\subset\cdots\subset G(M)=G
\]
be these subgroups. Then, for $\epsilon\in\nu_{N}$ and $0\leq n\leq M$,
we have
\[
\{\eta\epsilon\,\mid\,\eta\in U(n)\}=\{\sigma(\epsilon)\,\mid\,\sigma\in G(n)\}.
\]
Now, (\ref{eq:LVmn}) follows from
\begin{align*}
\tilde{L}\left(\sum_{\eta\in U(n)}g\left(\seqp{\epsilon_{1},\eta\epsilon_{2}}\right)\right) & =\tilde{L}\left(\sum_{\sigma\in G(n)}g\left(\seqp{\epsilon_{1},\sigma(\epsilon_{2})}\right)\right)\\
 & =\sum_{\sigma\in G(n)}g\left(\seqp{\sigma(\epsilon_{2})}\otimes\tilde{\theta}(\epsilon_{1}/\sigma(\epsilon_{2}))\right)\\
 & =\sum_{\sigma\in G(n)}g\left(\seqp{\epsilon_{2}}\otimes\tilde{\theta}(\sigma^{-1}(\epsilon_{1})/\epsilon_{2})\right)\\
 & \quad+\sum_{\sigma\in G(n)}g(\sigma-1)\left(\seqp{\epsilon_{2}}\otimes\tilde{\theta}(\sigma^{-1}(\epsilon_{1})/\epsilon_{2})\right)\\
 & =\sum_{\eta\in U(n)}g\left(\seqp{\epsilon_{2}}\otimes\tilde{\theta}(\eta\epsilon_{1}/\epsilon_{2})\right)\\
 & \quad+\sum_{\sigma\in G(n)}g(\sigma-1)\left(\seqp{\epsilon_{2}}\otimes\tilde{\theta}(\sigma^{-1}(\epsilon_{1})/\epsilon_{2})\right)
\end{align*}
and Lemma \ref{lem:theta_sum}. Since $I_{G}^{m}=\{0\}$ for $m\geq p^{M}$,
there are only finitely many pairs $(m,n)$ for which $V(m,n)\neq\{0\}$.
Thus, (\ref{eq:RVmn}) and (\ref{eq:LVmn}) implies Proposition \ref{prop:LR_apply_w2}.

\subsection{The weight $3$ case}

In this subsection, we will consider the case where the weight and
depth is $3$ of Theorem \ref{thm:strong_main}, i.e., the proof of
following proposition:
\begin{prop}
\label{prop:main_weight3}The restriction of $\mathcal{E}_{3}$ to
$\mathcal{W}_{1}^{\otimes3}$ is unipotent.
\end{prop}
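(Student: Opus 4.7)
Following the weight 2 strategy, the plan is first to pass to modified operators using $\tilde\theta$, then to introduce a multi-parameter filtration on $\mathcal{W}_1^{\otimes 3}$, and finally to show that each operator shifts the filtration in a way that forces termination after finitely many steps.

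A direct expansion using (\ref{eq:iota_D}) yields
\begin{align*}
(\mathcal{E}_3 - \mathrm{id})\seqp{\epsilon_1,\epsilon_2,\epsilon_3}
 & = \seqp{\epsilon_2,\epsilon_3}\otimes\theta(\epsilon_1/\epsilon_2) - \seqp{\epsilon_1,\epsilon_3}\otimes\theta(\epsilon_2/\epsilon_1) \\
 & \quad + \seqp{\epsilon_1,\epsilon_3}\otimes\theta(\epsilon_2/\epsilon_3) - \seqp{\epsilon_1,\epsilon_2}\otimes\theta(\epsilon_3/\epsilon_2).
\end{align*}
The two terms on each line pair up and cancel whenever the ratio inside $\theta$ equals $1$, so each $\theta$ may be replaced by $\tilde\theta$ without changing the total. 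This realises $\mathcal{E}_3 - \mathrm{id}$ as a signed sum of four operators $\tilde L_{12}, \tilde R_{12}, \tilde L_{23}, \tilde R_{23}$, directly analogous to $\tilde L - \tilde R$ for weight $2$, and it suffices to show that any sufficiently long composition of these four operators annihilates every element of $\mathcal{W}_1^{\otimes 3}$.

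Next, I would introduce a tri-indexed filtration $V(m; n_1, n_2) \subset \mathcal{W}_1^{\otimes 3}$ generalising the $V(m,n)$ of the weight 2 case, defined as the $\mathbb{F}_p$-span of elements of the form $\sum_{\eta_1 \in U(n_1)}\sum_{\eta_2 \in U(n_2)} g(\seqp{\epsilon_1, \eta_1\epsilon_2, \eta_2\epsilon_3})$ with $g \in I_G^m$ and $\epsilon_i \in \nu_N$. Applying Lemma \ref{lem:theta_sum} in the appropriate slot and arguing as in the derivations of (\ref{eq:RVmn}) and (\ref{eq:LVmn}), each of the four modified operators should either increase $n_1$ or $n_2$ by $1$, or else bump the augmentation exponent $m$ by $1$ via a Galois correction $(\sigma - 1)$. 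Since $I_G^{p^M} = 0$ and each $n_i \leq M$, the set of nonzero pieces $V(m; n_1, n_2)$ is finite, and a sufficiently long composition of these operators must vanish.

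The main obstacle is the bookkeeping for how the filtration reindexes under each operator. Whereas in the weight 2 case the unique $U(n)$-averaging always sits at position $2$, here each operator removes one of the three positions and tensors $\tilde\theta$ on the right, so the averagings on positions $2$ and $3$ migrate, mix, and must be re-identified after the operation. Concretely, the operator $\tilde R_{12}$ drops position $2$ and produces a tuple whose remaining positions are former $(1,3)$, so what used to be the $U(n_2)$-average on position $3$ must be recognised as a $U(n_2)$-average on the new position $2$; the "left" operators additionally require verifying that their corrections land in $V(m+1; \ldots)$ rather than breaking the filtration structure. Once the precise transition rules on the triples $(m; n_1, n_2)$ are verified and shown to form a finite acyclic graph, the nilpotency of $(\mathcal{E}_3 - \mathrm{id})$ on $\mathcal{W}_1^{\otimes 3}$ follows by the same termination argument as in the proof of Proposition \ref{prop:LR_apply_w2}.
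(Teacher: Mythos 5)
Your first paragraph is correct: expanding $(\mathcal{E}_3-\mathrm{id})$ via (\ref{eq:iota_D}) and replacing $\theta$ by $\tilde\theta$ gives precisely the four operators $\tilde L_1,\tilde L_2,\tilde R_1,\tilde R_2$, and this matches the paper exactly (cf. Lemma \ref{lem:E_to_LRS} with $d=3$). The rest of the argument, however, has a genuine gap: the filtration $V(m;n_1,n_2)$ with a \emph{single} Galois exponent $m$ is underparametrized and is not stable under $\tilde L_2$.

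Concretely, take $u=\sum_{\eta_1\in U(n_1)}\sum_{\eta_2\in U(n_2)}g\bigl(\seqp{\epsilon_1,\eta_1\epsilon_2,\eta_2\epsilon_3}\bigr)$ and apply $\tilde L_2$. One gets $\sum_{\eta_1,\eta_2}g\bigl(\seqp{\epsilon_1,\eta_2\epsilon_3}\otimes\tilde\theta(\eta_1\eta_2^{-1}\epsilon_2/\epsilon_3)\bigr)$. Summing over $\eta_1$ for fixed $\eta_2$ gives an element of $A(n_1+1)$ by Lemma \ref{lem:theta_sum}, but that element \emph{depends on} $\eta_2$, so the result is not of the form "average on position $2$ tensor fixed element of $A(n_1+1)$". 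To untangle this one must write $\eta_2\epsilon_3=\sigma(\epsilon_3)$, $\sigma\in G(n_2)$, and pull $\sigma^{-1}$ out of the Galois-averaged positions. But in your filtration $g$ acts on \emph{all three} positions including position $1$, which is not averaged; applying $\sigma^{-1}$ globally turns $\epsilon_1$ into $\sigma^{-1}(\epsilon_1)$, which then varies with $\sigma$ and remains entangled with the $\tilde\theta$-factor, so the correction is not a clean $g(\sigma-1)\in I_G^{m+1}$ applied to a fixed vector. The paper resolves this precisely by keeping two Galois exponents $V(m_1,n_2,m_2,n_3)$, with $m_1$ for an outer $g_1$ acting on all positions and $m_2$ for an inner $g_2$ acting only on positions $2,3$ (the general $\Phi_{\sigma_1,\dots,\sigma_{d-1}}$ is the telescoped version). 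Then $\tilde L_2$ bumps $m_2$, as in (\ref{eq:L2_apply}), and stays inside the filtration.

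There is a second subtlety your sketch does not anticipate. Once the inner exponent $m_2$ is present, $\tilde R_1$ and $\tilde L_1$ reset it to $0$ (cf.\ (\ref{eq:R1_apply}), (\ref{eq:L1_apply})), so lexicographic increase of the index can \emph{fail} when $n_2=n_3$ and $m_2>0$. The paper handles this by excluding the "bad" set $\mathcal{J}_3^{\mathrm{bad}}=\{(m_1,n_2,m_2,n_3):n_2=n_3,\ m_2>0\}$ from the index set $\mathcal{J}_3$ and observing that the outputs of all four operators always land outside $\mathcal{J}_3^{\mathrm{bad}}$. Your termination argument would need both repairs — the nested Galois exponent and the exclusion of bad tuples — to go through.
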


Since the proof in the general setting will be provided in Section
\ref{sec:Proof-of-main}, we will omit the details in the following
discussion. By (\ref{eq:iota_D}), for $u\in\mathcal{W}_{1}^{\otimes3}$,
we have
\[
(\mathcal{E}_{3}-\mathrm{id})(u)=\tilde{L}_{1}(u)+\tilde{L}_{2}(u)-\tilde{R}_{1}(u)-\tilde{R}_{2}(u)
\]
where
\begin{align*}
\tilde{L}_{1}\left(\seqp{\epsilon_{1},\epsilon_{2},\epsilon_{3}}\right) & =\seqp{\epsilon_{2},\epsilon_{3}}\otimes\tilde{\theta}(\epsilon_{1}/\epsilon_{2}),\\
\tilde{L}_{2}\left(\seqp{\epsilon_{1},\epsilon_{2},\epsilon_{3}}\right) & =\seqp{\epsilon_{1},\epsilon_{3}}\otimes\tilde{\theta}(\epsilon_{2}/\epsilon_{3}),\\
\tilde{R}_{1}\left(\seqp{\epsilon_{1},\epsilon_{2},\epsilon_{3}}\right) & =\seqp{\epsilon_{1},\epsilon_{3}}\otimes\theta(\epsilon_{2}/\epsilon_{1}),\\
\tilde{R}_{2}\left(\seqp{\epsilon_{1},\epsilon_{2},\epsilon_{3}}\right) & =\seqp{\epsilon_{1},\epsilon_{2}}\otimes\theta(\epsilon_{3}/\epsilon_{2}).
\end{align*}
In this section, we will prove the following refinement of Proposition
\ref{prop:main_weight3}.
\begin{prop}
\label{prop:LR_apply_w3}For any $u\in\mathcal{W}_{1}^{\otimes3}$,
there exists $n\geq0$ such that, for any $O_{1},\dots,O_{n}\in\{\tilde{L}_{1},\tilde{R}_{1},\tilde{L}_{2},\tilde{R}_{2}\}$,
we have
\[
O_{1}\circ\cdots\circ O_{n}(u)=0.
\]
In other words, if we apply $\tilde{L}_{1}$, $\tilde{R}_{1}$, $\tilde{L}_{2}$
or $\tilde{R}_{2}$ repeatedly, then any element of $\mathcal{W}_{1}^{\otimes3}$
becomes $0$ eventually.
\end{prop}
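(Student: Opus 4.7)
The plan is to generalize the multi-parameter filtration used in the proof of Proposition \ref{prop:LR_apply_w2}. For nonnegative integers $m, n_2, n_3$, I would define
\[
V(m;n_2,n_3) := \mathrm{span}_{\mathbb{F}_p}\left\{\, g\!\left(\sum_{\eta_2 \in U(n_2),\, \eta_3 \in U(n_3)} \seqp{\epsilon_1,\eta_2\epsilon_2,\eta_3\epsilon_3}\right) \,\Big|\, g \in I_G^m,\ \epsilon_i \in \nu_N \,\right\},
\]
with the convention $V(m;n_2,n_3) = 0$ whenever $\max(n_2,n_3) > M$; since $I_G^{p^M} = 0$ as well, only finitely many $V(m;n_2,n_3)$ are nonzero, while $V(0;0,0) = \mathcal{W}_1^{\otimes 3}$. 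Proposition \ref{prop:LR_apply_w3} would then follow once I show that each of $\tilde L_1, \tilde L_2, \tilde R_1, \tilde R_2$ sends every $V(m;n_2,n_3)$ into a sum of strictly higher-indexed subspaces with respect to the lexicographic order on $(m, n_2+n_3)$.

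The first and easier step is to analyze the operators $\tilde R_2$ and $\tilde L_2$, which act on the pair $(\epsilon_2,\epsilon_3)$ with $\epsilon_1$ frozen. These are essentially the weight-$2$ operators applied to the last two tensor slots, so the argument of Proposition \ref{prop:LR_apply_w2} carries over verbatim: Lemma \ref{lem:theta_sum} yields $\tilde R_2(V(m;n_2,n_3)) \subset V(m;n_2,n_3+1)$, and the $\sigma = (\sigma-1)+1$ rewriting over $\sigma \in G(n_3)$ gives $\tilde L_2(V(m;n_2,n_3)) \subset V(m;n_2,n_3+1) + V(m+1;n_2,1)$.

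The main obstacle will be the analysis of $\tilde R_1$ and $\tilde L_1$: these act on positions $1$ and $2$, yet their outputs place $\epsilon_3$ into the second slot and a $\theta$- or $\tilde\theta$-term into the third. Consequently, the $U(n_2)$-invariance at position $2$ of the input must migrate to position $3$ of the output, while the $U(n_3)$-invariance at position $3$ migrates to position $2$. I would handle this by first replacing $\epsilon_3$ inside the sum by $\sigma(\epsilon_3)$ for $\sigma \in G(n_3)$ and then applying the $\sigma = (\sigma-1)+1$ trick jointly in positions $2$ and $3$, using the fact that $G$ acts diagonally on $\mathcal{W}_1^{\otimes 3}$ and commutes with each operator. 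This should produce a main term in some $V(m;n_2',n_3')$ with $n_2' + n_3' > n_2 + n_3$, together with a Galois correction living in $V(m+1;\ast,\ast)$. Combining all four inclusions, the proposition follows by induction on the lexicographic pair $(m, n_2+n_3)$ and the finiteness of the filtration.
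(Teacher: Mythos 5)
Your plan reproduces the paper's filtration scheme in spirit (the indices $n_2,n_3$ recording $U(n_i)$-averages and a lexicographic descent), but the filtration itself is too coarse: you track only a single Galois parameter $m$ for $g\in I_G^m$ acting \emph{diagonally} on all three tensor slots. The paper's proof (Section~4.2 and its generalization in Section~5) uses a four-tuple $(m_1,n_2,m_2,n_3)$ with \emph{two} Galois parameters: $g_1\in I_G^{m_1}$ acting diagonally on all three slots, and $g_2\in I_G^{m_2}$ acting diagonally on slots $2$ and $3$ only. That second parameter is exactly what your proposal is missing, and without it your claimed inclusion for $\tilde L_2$ fails.

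The problem is your assertion that ``the argument of Proposition~\ref{prop:LR_apply_w2} carries over verbatim'' to give
\[
\tilde L_2(V(m;n_2,n_3))\subset V(m;n_2,n_3+1)+V(m+1;n_2,1).
\]
The weight-$2$ argument applied to slots $2,3$ produces a Galois correction of the form $g\cdot(\tau-1)^{(2,3)}(\cdots)$ where $\tau-1$ acts on positions $2,3$ \emph{only}; this lies in the paper's $V(m_1,n_2,m_2+1,\cdot)$ but not in your $V(m+1;\cdot,\cdot)$, which requires $(\tau-1)$ to act diagonally. If instead you take $\tau$ diagonal (so that the correction lands in $I_G^{m+1}$), then $\tau^{-1}$ disturbs the first slot, and the ``main term'' $\sum_\tau \seqp{\tau^{-1}\epsilon_1,\epsilon_3}\otimes\tilde\theta(\tau^{-1}(\eta_2\epsilon_2)/\epsilon_3)$ is no longer of the form $\seqp{\epsilon_1}\otimes(\cdots)$ and hence not in any $V(m;n_2',n_3')$. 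Either way the inclusion breaks. Concretely, take $N=9$, so $p=q=3$, $M=1$, $\nu_9=\{\zeta_9,\zeta_9^4,\zeta_9^7\}$, $A(1)=\mathbb{F}_3\cdot a$ with $a=\seqp{\zeta_9}+\seqp{\zeta_9^4}+\seqp{\zeta_9^7}$, and $A(2)=\{0\}$. Put $u=\seqp{\zeta_9,\zeta_9}\otimes a\in V(0;0,1)$. A direct computation (using $\tilde\theta(1)=\tilde\theta(\zeta_9^{\pm 3})=a$) gives
\[
\tilde L_2(u)=\bigl(\seqp{\zeta_9,\zeta_9}+\seqp{\zeta_9,\zeta_9^4}+\seqp{\zeta_9,\zeta_9^7}\bigr)\otimes a .
\]
Since $V(0;0,2)=\{0\}$, membership in $V(0;0,2)+V(1;0,1)$ would force $v_0:=\seqp{\zeta_9,\zeta_9}+\seqp{\zeta_9,\zeta_9^4}+\seqp{\zeta_9,\zeta_9^7}$ to lie in the image of the diagonal $I_G$ inside $\mathcal{W}_1\otimes\mathcal{W}_1$. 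But the $G$-invariant functional $\phi_1(\seqp{\epsilon_1,\epsilon_2})=\delta_{\epsilon_1,\epsilon_2}$ annihilates that image while $\phi_1(v_0)=1$, so $\tilde L_2(u)\notin V(0;0,2)+V(1;0,1)$. Thus the inclusion you need for $\tilde L_2$ is false, and the descent argument collapses. The fix is precisely to refine the filtration to $(m_1,n_2,m_2,n_3)$ with a second $I_G$-parameter acting on slots $2,3$ only (and, in general depth, to the $2d$-tuple used in the paper), after which both $\tilde L_1,\tilde R_1$ (using $m_1$) and $\tilde L_2,\tilde R_2$ (using $m_2$) strictly increase the index. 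Note also that once one does this, a mild extra care is needed: $\tilde R_1,\tilde L_1$ wipe out $m_2$, so one must restrict to the index set $\mathcal{J}_3$ excluding the tuples with $n_2=n_3$ and $m_2>0$ to keep the map order-increasing; this secondary subtlety does not appear in your two-index ordering $(m,n_2+n_3)$ but becomes unavoidable with the correct four-parameter filtration.
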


For $m_{1},m_{2},n_{2},n_{3}\in\mathbb{Z}_{\geq0}$, define a subspace
$V(m_{1},n_{2},m_{2},n_{3})\subset\mathcal{W}_{1}^{\otimes3}$ by
letting
\begin{multline*}
V(m_{1},n_{2},m_{2},b_{3})\\
=\mathrm{span}_{\mathbb{F}_{p}}\left\{ \left.\sum_{\eta_{2}\in U(n_{2})}\sum_{\eta_{3}\in U(n_{3})}g_{1}\left(\seqp{\epsilon_{1}}\otimes g_{2}\left(\seqp{\eta_{2}\epsilon_{2},\eta_{3}\epsilon_{3}}\right)\right)\,\right|\,g_{1}\in I_{G}^{m_{1}},g_{2}\in I_{g}^{m_{2}},\,\epsilon_{1},\epsilon_{2},\epsilon_{3}\in\nu_{N}\right\} 
\end{multline*}
when $\max(n_{2},n_{3})\leq M$ and $V(m_{1},n_{2},m_{2},n_{3})=\{0\}$
when $\max(n_{2},n_{3})>M$. The definition of $V(m_{1},n_{2},m_{2},n_{3})$
is motivated as follows.
\begin{itemize}
\item Note that $V(0,0,m,n)=\mathcal{W}_{1}\otimes V(m,n)$. Thus, by (\ref{eq:RVmn})
and (\ref{eq:LVmn}), we have
\begin{equation}
\tilde{R}_{2}(V(0,0,m,n))\subset V(0,0,m,n+1)\label{eq:R2_pre}
\end{equation}
and
\begin{equation}
\tilde{L}_{2}(V(0,0,m,n))\subset V(0,0,m,n+1)+V(0,0,m+1,1)\label{eq:L2_pre}
\end{equation}
which motivates us to consider $V(0,0,m,n)$ for $m,n\in\mathbb{Z}_{\geq0}$.
\item We have
\begin{equation}
\tilde{R}_{1}(V(0,0,m,n))\subset V(0,n,0,1)\label{eq:R1_pre}
\end{equation}
since
\[
\tilde{R}_{1}\left(\sum_{\eta\in U(n)}\seqp{\epsilon_{1},\epsilon_{2},\eta\epsilon_{3}}\right)=\sum_{\eta\in U(n)}\seqp{\epsilon_{1},\eta\epsilon_{3}}\otimes\tilde{\theta}(\epsilon_{2}/\epsilon_{1})\in\mathcal{W}_{1}\otimes A(n)\otimes A(1)
\]
by Lemma \ref{lem:theta_sum}. This motivates us to introduce the
new parameter $n_{2}$.
\item For $n_{2}\leq n_{3}$, we have 
\begin{equation}
\tilde{L}_{1}(V(0,n_{2},m,n_{3}))\in V(0,n_{3},0,n_{2}+1)+V(1,n_{3},0,1)\label{eq:L1_pre}
\end{equation}
since
\begin{align*}
 & \tilde{L}_{1}\left(\sum_{\eta_{2}\in U(n_{2})}\sum_{\eta_{3}\in U(n_{3})}\seqp{\epsilon_{1},\eta_{2}\epsilon_{2},\eta_{3}\epsilon_{3}}\right)\\
 & =\tilde{L}_{1}\left(\sum_{\sigma\in G(n_{2})}\sum_{\eta_{3}\in U(n_{3})}\seqp{\epsilon_{1},\sigma(\epsilon_{2}),\eta_{3}\epsilon_{3}}\right)\\
 & =\sum_{\sigma\in G(n_{2})}\sum_{\eta_{3}\in U(n_{3})}\seqp{\sigma(\epsilon_{2}),\eta_{3}\epsilon_{3}}\otimes\seqp{\epsilon_{1}/\sigma(\epsilon_{2})}\\
 & =\sum_{\sigma\in G(n_{2})}\sum_{\eta_{3}\in U(n_{3})}\seqp{\epsilon_{2},\sigma^{-1}(\eta_{3}\epsilon_{3})}\otimes\tilde{\theta}(\sigma^{-1}(\epsilon_{1})/\epsilon_{2})\\
 & \quad+\sum_{\sigma\in G(n_{2})}\sum_{\eta_{3}\in U(n_{3})}(\sigma-1)\left(\seqp{\epsilon_{2},\sigma^{-1}(\eta_{3}\epsilon_{3})}\otimes\tilde{\theta}(\sigma^{-1}(\epsilon_{1})/\epsilon_{2})\right)\\
 & =\sum_{\eta_{2}\in U(n_{2})}\sum_{\eta_{3}\in U(n_{3})}\seqp{\epsilon_{2},\eta_{3}\epsilon_{3}}\otimes\tilde{\theta}(\eta_{2}\epsilon_{1}/\epsilon_{2})\\
 & \quad+\sum_{\sigma\in G(n_{2})}\sum_{\eta_{3}\in U(n_{3})}(\sigma-1)\left(\seqp{\epsilon_{2},\eta_{3}\epsilon_{3}}\otimes\tilde{\theta}(\sigma^{-1}(\epsilon_{1})/\epsilon_{2})\right)\qquad(\text{by }n_{2}\leq n_{3})\\
 & \in V(0,n_{3},0,n_{2}+1)+V(1,n_{3},0,1)\qquad\text{(by Lemma \ref{lem:theta_sum}).}
\end{align*}
This motivate us to introduce the new parameter $m_{1}$.
\end{itemize}
Let $m_{1},m_{2},n_{2},n_{3}$ be non-negative integers such that
$n_{2}\leq n_{3}$. Then, (\ref{eq:R2_pre}), (\ref{eq:L2_pre}),
(\ref{eq:R1_pre}), and (\ref{eq:L1_pre}) are generalized as follows:
\begin{equation}
\tilde{R}_{2}(V(m_{1},n_{2},m_{2},n_{3}))\subset V(m_{1},n_{2},m_{2},n_{3}+1),\label{eq:R2_apply}
\end{equation}
\begin{equation}
\tilde{L}_{2}(V(m_{1},n_{2},m_{2},n_{3}))\subset V(m_{1},n_{2},m_{2},n_{3}+1)+V(m_{1},n_{2},m_{2}+1,n_{2}+1)\label{eq:L2_apply}
\end{equation}
\begin{align}
\tilde{R}_{1}(V(m_{1},n_{2},m_{2},n_{3})) & \subset V(m_{1},n_{3},0,n_{2}+1)\nonumber \\
 & \subset V(m_{1},\min(n_{3},n_{2}+1),0,n_{2}+1)\label{eq:R1_apply}
\end{align}
\begin{align}
\tilde{L}_{1}(V(m_{1},n_{2},m_{2},n_{3})) & \subset V(m_{1},n_{3},0,n_{2}+1)+V(m_{1}+1,n_{3},0,1)\nonumber \\
 & \subset V(m_{1},\min(n_{3},n_{2}+1),0,n_{2}+1)+V(m_{1}+1,1,0,1).\label{eq:L1_apply}
\end{align}
(The reason to rewrite the right hand side of (\ref{eq:R1_apply})
and (\ref{eq:L1_apply}) is to keep the condition $n_{2}\leq n_{3}$).
Fortunately, $(m_{1},n_{2},m_{2},n_{3}+1)$, $(m_{1},n_{2},m_{2}+1,n_{2}+1)$,
and $(m_{1}+1,1,0,1)$ are greater than $(m_{1},n_{2},m_{2},n_{3})$
in lexicographic order. However, unfortunately, $(m_{1},\min(n_{3},n_{2}+1),0,n_{2}+1)$
is not greater than $(m_{1},n_{2},m_{2},n_{3})$ in lexicographic
order when 
\[
(m_{1},n_{2},m_{2},n_{3})\in\mathcal{J}_{3}^{\mathrm{bad}}:=\{(m_{1},n_{2},m_{2},n_{3})\in\mathbb{Z}_{\geq0}\,\mid\,n_{2}=n_{3},m_{2}>0\}.
\]
Still, this is not a critical obstruction since
\[
(m_{1},n_{2},m_{2},n_{3}+1),(m_{1},n_{2},m_{2}+1,n_{2}+1),(m_{1}+1,1,0,1),(m_{1},\min(n_{3},n_{2}+1),0,n_{2}+1)\notin\mathcal{J}_{3}^{\mathrm{bad}}.
\]
Let us write the proof of Proposition \ref{prop:LR_apply_w3}.
\begin{proof}[Proof of Proposition \ref{prop:LR_apply_w3}]
Let $\mathcal{J}_{3}$ be the set of tuples $(m_{1},n_{2},m_{2},n_{3})\in\mathbb{Z}_{\geq0}^{4}$
satisfying the following conditions:
\begin{itemize}
\item $n_{2}\leq n_{3}$
\item $(m_{1},n_{2},m_{2},n_{3})\notin\mathcal{J}_{3}^{\mathrm{bad}}$.
\end{itemize}
Then, by (\ref{eq:R2_apply}), (\ref{eq:L2_apply}), (\ref{eq:R1_apply})
and (\ref{eq:L1_apply}),
\begin{equation}
O(V(m_{1},n_{2},m_{2},n_{3}))\subset\sum_{\substack{(m_{1}',n_{2}',m_{2}',n_{3}')\in\mathcal{J}_{3},\\
(m_{1}',n_{2}',m_{2}',n_{3}')\text{ is greater than }\\
(m_{1},n_{2},m_{2},n_{3})\text{ in lexicographic order}
}
}V(m_{1}',n_{2}',m_{2}',n_{3}')\label{eq:OV}
\end{equation}
for $(m_{1},n_{2},m_{2},n_{3})\in\mathcal{J}_{3}$ and $O\in\{\tilde{L}_{1},\tilde{R}_{1},\tilde{L}_{2},\tilde{R}_{2}\}$.
Since there are only finitely many tuples $(m_{1},n_{2},m_{2},n_{3})\in\mathcal{J}_{3}$
for which $V(m_{1},n_{2},m_{2},n_{3})\neq\{0\}$, (\ref{eq:OV}) implies
Proposition \ref{prop:LR_apply_w3}.
\end{proof}

\section{Proof of main theorem\label{sec:Proof-of-main}}

In this section, we give a proof of Theorem \ref{thm:strong_main}.
Fix $N=qp^{M}$ with $p\in\{2,3\}$, $q=6-p$, and $M\geq0$. Fix
also $d\in\mathbb{Z}_{\geq0}$.
\begin{defn}
\label{def:LRS}Define $\mathbb{Q}$-linear endomorphisms $\tilde{L}_{i}$
($i=1,\dots,d-1$), $\tilde{R}_{i}$ ($i=1,\dots,d-1$), and $S$
of $\mathcal{W}^{\otimes d}$ by
\[
\tilde{L}_{i}\left(\biseq{\epsilon_{1},\dots,\epsilon_{d}}{l_{1},\dots,l_{d}}_{p}\right)=\delta_{l_{i},0}\biseq{\epsilon_{1},\dots,\widehat{\epsilon_{i}},\dots,\epsilon_{d}}{l_{1},\dots,\widehat{l_{i}},\dots,l_{d}}_{p}\otimes\tilde{\theta}(\epsilon_{i}/\epsilon_{i+1}),
\]
\[
\tilde{R}_{i}\left(\biseq{\epsilon_{1},\dots,\epsilon_{d}}{l_{1},\dots,l_{d}}_{p}\right)=\delta_{l_{i},0}\biseq{\epsilon_{1},\dots,\widehat{\epsilon_{i+1}},\dots,\epsilon_{d}}{l_{1},\dots,\widehat{l_{i}},\dots,l_{d}}_{p}\otimes\tilde{\theta}(\epsilon_{i+1}/\epsilon_{i}),
\]
and
\[
S\left(\biseq{\epsilon_{1},\dots,\epsilon_{d}}{l_{1},\dots,l_{d}}_{p}\right)=\sum_{r=l_{d}+1}^{l_{d-1}+l_{d}}(-1)^{r-l_{d}}\binom{r}{l_{d}}\biseq{\epsilon_{1},\dots,,\epsilon_{d}}{l_{1},\dots,l_{d-2},l_{d-1}+l_{d}-r,r}_{p}.
\]
\end{defn}

\begin{lem}
\label{lem:E_to_LRS}As $\mathbb{Q}$-linear endomorphisms from $\mathcal{W}^{\otimes d}\to\mathcal{W}^{\otimes d}$,
we have
\[
\mathcal{E}_{d}-\mathrm{id}=\sum_{i=1}^{d-1}\tilde{L}_{i}-\sum_{i=1}^{d-1}\tilde{R}_{i}+S.
\]
\end{lem}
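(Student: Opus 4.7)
The identity is a direct comparison of formula (\ref{eq:iota_D}) with Definition \ref{def:LRS}, and the plan proceeds in three bookkeeping steps. First, I would extract the identity operator from the $r$-summation at the end of (\ref{eq:iota_D}): the summand at $r = l_d$ has coefficient $(-1)^{0}\binom{l_d}{l_d} = 1$ and reproduces $\biseqp{\epsilon_{1},\dots,\epsilon_{d}}{l_{1},\dots,l_{d}}$ itself, contributing $\mathrm{id}$, while the remaining range $l_d+1 \le r \le l_{d-1}+l_d$ is verbatim the definition of $S$.

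Second, I would identify the first two sums of (\ref{eq:iota_D}) with operators $L_i$ and $R_i$ obtained from Definition \ref{def:LRS} by formally replacing $\tilde{\theta}$ by $\theta$. The support condition $l_i = 0$ (present as the range of summation in (\ref{eq:iota_D}) and as the factor $\delta_{l_i,0}$ in Definition \ref{def:LRS}), the hatted index structure, and the orientation of the ratio $\epsilon_i/\epsilon_{i+1}$ versus $\epsilon_{i+1}/\epsilon_i$ all match directly. At this stage I have the preliminary identity $\mathcal{E}_d - \mathrm{id} = \sum_{i=1}^{d-1} L_i - \sum_{i=1}^{d-1} R_i + S$.

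Third, I would verify that replacing $\theta$ by $\tilde{\theta}$ does not change the alternating sum $\sum_i L_i - \sum_i R_i$. By Definition \ref{def:tilde_theta}, the difference $\tilde{\theta} - \theta$ is supported at argument $1$, so $(\tilde{L}_i - L_i)\bigl(\biseqp{\epsilon_{1},\dots,\epsilon_{d}}{l_{1},\dots,l_{d}}\bigr)$ vanishes unless $l_i = 0$ and $\epsilon_i = \epsilon_{i+1}$, in which case it equals $\biseqp{\epsilon_{1},\dots,\widehat{\epsilon_{i}},\dots,\epsilon_{d}}{l_{1},\dots,\widehat{l_{i}},\dots,l_{d}} \otimes \tilde{\theta}(1)$; analogously for $\tilde{R}_i - R_i$ but with $\widehat{\epsilon_{i+1}}$ replacing $\widehat{\epsilon_i}$. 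Since $\epsilon_i = \epsilon_{i+1}$ makes these two deletions identical (and the deletion of the common $l_i = 0$ is the same in both), one has $\tilde{L}_i - L_i = \tilde{R}_i - R_i$ basis-vector by basis-vector, so the corrections cancel term-by-term in the difference $\sum_i \tilde{L}_i - \sum_i \tilde{R}_i$. The argument is purely symbolic with no genuine obstacle; the only subtle point is this symmetry between the $L$-side deletion of position $i$ and the $R$-side deletion of position $i+1$, which is precisely what forces the $\tilde{\theta}(1)$ corrections to cancel.
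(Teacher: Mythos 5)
Your proposal is correct and follows essentially the same route as the paper's own proof: matching the three families of terms in (\ref{eq:iota_D}) against the definitions of $\tilde{L}_i$, $\tilde{R}_i$, and $S$ (plus the identity term $r=l_d$), then observing that $\theta$ and $\tilde{\theta}$ differ only at argument $1$ and that the resulting corrections cancel between the $\tilde{L}_i$- and $\tilde{R}_i$-sides because deleting position $i$ or $i+1$ gives the same tuple when $\epsilon_i=\epsilon_{i+1}$. Your intermediate introduction of $L_i$, $R_i$ (the $\theta$-versions) is a minor organizational variation, not a different argument.
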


\begin{proof}
Let 
\[
u=\biseq{\epsilon_{1},\dots,\epsilon_{d}}{l_{1},\dots,l_{d}}_{p}\in\mathcal{W}^{\otimes d}.
\]
Then, by (\ref{eq:iota_D}) and the definition of $\tilde{L}_{i}$,
$\tilde{R}_{i}$ and $S$, we have 
\begin{align*}
 & (\mathcal{E}_{d}-\mathrm{id})(u)-\left(\sum_{i=1}^{d-1}\tilde{L}_{i}(u)-\sum_{i=1}^{d-1}\tilde{R}_{i}(u)+S(u)\right)\\
 & =\sum_{\substack{1\leq i<d\\
l_{i}=0
}
}\biseq{\epsilon_{1},\dots,\widehat{\epsilon_{i}},\dots,\epsilon_{d}}{l_{1},\dots,\widehat{l_{i}},\dots,l_{d}}_{p}\otimes\left(\theta(\epsilon_{i}/\epsilon_{i+1})-\tilde{\theta}(\epsilon_{i}/\epsilon_{i+1})\right)\\
 & \quad-\sum_{\substack{1\leq i\leq d\\
l_{i}=0
}
}\biseq{\epsilon_{1},\dots,\widehat{\epsilon_{i+1}},\dots,\epsilon_{d}}{l_{1},\dots,\widehat{l_{i}},\dots,l_{d}}_{p}\otimes\left(\theta(\epsilon_{i+1}/\epsilon_{i})-\tilde{\theta}(\epsilon_{i+1}/\epsilon_{i})\right).
\end{align*}
Here, by definition of $\tilde{\theta}$ (Definition \ref{def:tilde_theta}),
$\theta(\eta)=\tilde{\theta}(\eta)$ except for the case $\eta=1$.
Thus,
\begin{align*}
 & =\sum_{\substack{1\leq i<d\\
l_{i}=0\\
\epsilon_{i}=\epsilon_{i+1}
}
}\biseq{\epsilon_{1},\dots,\widehat{\epsilon_{i}},\dots,\epsilon_{d}}{l_{1},\dots,\widehat{l_{i}},\dots,l_{d}}_{p}\otimes\left(\theta(1)-\tilde{\theta}(1)\right)\\
 & \quad-\sum_{\substack{1\leq i\leq d\\
l_{i}=0\\
\epsilon_{i}=\epsilon_{i+1}
}
}\biseq{\epsilon_{1},\dots,\widehat{\epsilon_{i+1}},\dots,\epsilon_{d}}{l_{1},\dots,\widehat{l_{i}},\dots,l_{d}}_{p}\otimes\left(\theta(1)-\tilde{\theta}(1)\right)\\
 & =0,
\end{align*}
which completes the proof.
\end{proof}
Recall that the Galois group $G$ for the extension $\mathbb{Q}(\zeta_{N})/\mathbb{Q}(\zeta_{q})$
acts on $\nu_{N}$. 
\begin{defn}
For $\sigma_{1},\dots,\sigma_{d-1}\in G$, we define an $\mathbb{F}_{p}$-linear
automorphism $\Phi_{\sigma_{1},\dots,\sigma_{d-1}}$ of $\mathcal{W}^{\otimes d}$
by
\[
\Phi_{\sigma_{1},\dots,\sigma_{d-1}}(\biseqp{\epsilon_{1},\dots,\epsilon_{d}}{l_{1},\dots,l_{d}})=\Phi_{\sigma_{1},\dots,\sigma_{d-1}}\left(\biseqp{\epsilon_{1}^{\sigma_{1}},\epsilon_{2}^{\sigma_{1}\sigma_{2}},\dots,\epsilon_{d-1}^{\sigma_{1}\cdots\sigma_{d-1}},\epsilon_{d}^{\sigma_{1}\cdots\sigma_{d-1}}}{l_{1},\dots,l_{d}}\right).
\]
Furthermore, we extend the above definition to $\Phi_{g_{1},\dots,g_{d-1}}$
with $g_{1},\dots,g_{d-1}\in\mathbb{Z}[G]$ by linearity.
\end{defn}

\begin{defn}
For $J=(n_{1},m_{1},n_{2},\dots,m_{d-1},n_{d},l)\in\mathbb{Z}_{\geq0}^{2d}$,
we define an $\mathbb{F}_{p}$-subspace
\[
V_{J}\subset\mathcal{W}^{\otimes d}
\]
as follows. When $n_{j}\leq M$ for all $j$, we define $V_{J}$ as
an $\mathbb{F}_{p}$-subspace spanned by
\[
\left\{ \Phi_{g_{1},\dots,g_{d-1}}\left(\sum_{\substack{\eta_{i}\in U(n_{i})\\
(i=1,\dots,d)
}
}\biseqp{\eta_{1}\epsilon_{1},\dots,\eta_{d}\epsilon_{d}}{l_{1},\dots,l_{d}}\right)\left|\begin{array}{c}
g_{i}\in I_{G}^{m_{i}},\,\epsilon_{i}\in\nu_{N},\\
l_{1},\dots,l_{d-1}\in\mathbb{Z}_{\geq0},\,l_{d}\in\mathbb{Z}_{\geq l}
\end{array}\right.\right\} .
\]
When $n_{j}>M$ for some $j$, we put $V_{J}=\{0\}$.
\end{defn}

\begin{lem}
\label{lem:R_apply}Let $J=(n_{1},m_{1},\dots,n_{d-1},m_{d-1},n_{d},l)\in\mathbb{Z}^{2d}$
and $i\in\{1,\dots,d-1\}$. Assume that $n_{i}\leq n_{i+1}$. Then,
we have
\[
\tilde{R}_{i}(V_{J})\subset V_{J'}
\]
where
\[
J'=(n_{1},m_{1},\dots,n_{i},m_{i},n_{i+2},0,\dots,n_{d},0,n_{i+1}+1)
\]
\end{lem}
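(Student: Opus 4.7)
Take a generator
\[
w=\Phi_{g_{1},\dots,g_{d-1}}\!\left(\sum_{\eta_{j}\in U(n_{j})}\biseqp{\eta_{1}\epsilon_{1},\dots,\eta_{d}\epsilon_{d}}{l_{1},\dots,l_{d}}\right)
\]
of $V_{J}$, with $g_{j}\in I_{G}^{m_{j}}$, $\epsilon_{j}\in\nu_{N}$, and $l_{d}\geq l$. By multilinearity of $\Phi$ in each $g_{j}$, reduce to the case $g_{j}=\sigma_{j}\in G$ and expand so that $w=\sum_{\eta_{j}}\biseqp{(\eta_{1}\epsilon_{1})^{\tau_{1}},\dots,(\eta_{d-1}\epsilon_{d-1})^{\tau_{d-1}},(\eta_{d}\epsilon_{d})^{\tau_{d-1}}}{l_{1},\dots,l_{d}}$ with $\tau_{j}:=\sigma_{1}\cdots\sigma_{j}$.

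Now apply $\tilde{R}_{i}$ as in Definition \ref{def:LRS}. The result vanishes unless $l_{i}=0$, in which case each summand is $\biseqp{\dots,(\eta_{i}\epsilon_{i})^{\tau_{i}},(\eta_{i+2}\epsilon_{i+2})^{\tau_{i+2}},\dots}{l_{1},\dots,\hat{l}_{i},\dots,l_{d}}\otimes \tilde{\theta}\bigl((\eta_{i+1}\epsilon_{i+1})^{\tau_{i+1}}/(\eta_{i}\epsilon_{i})^{\tau_{i}}\bigr)$. Since $\eta_{i+1}$ appears only in the $\tilde{\theta}$-factor, pull the inner sum $\sum_{\eta_{i+1}\in U(n_{i+1})}$ inside. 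The key move is the substitution $\eta_{i+1}\mapsto \eta_{i}^{\tau_{i}\tau_{i+1}^{-1}}\eta_{i+1}$, which is a bijection of $U(n_{i+1})$ \emph{precisely because} $n_{i}\leq n_{i+1}$ guarantees $\eta_{i}^{\tau_{i}\tau_{i+1}^{-1}}\in U(n_{i})\subseteq U(n_{i+1})$. After this substitution the argument of $\tilde{\theta}$ no longer depends on $\eta_{i}$, and the Galois-equivariance $\tilde{\theta}(x^{\tau})=\tilde{\theta}(x)^{\tau}$ together with Lemma \ref{lem:theta_sum} put the resulting sum inside $A(n_{i+1}+1)^{\tau_{i}}=A(n_{i+1}+1)$.

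It remains to reassemble the output as a generator of $V_{J'}$. The sums over $\eta_{j}$ ($j\neq i+1$) supply the $U(n_{j})$-summations at new positions $1,\dots,i$ (unchanged) and at new positions $i+1,\dots,d-1$ (the original positions $i+2,\dots,d$ shifted one slot left); the new $d$-th slot inherits its $U(n_{i+1}+1)$-summation from the $\tilde{\theta}$-factor. This matches exactly the parameters $n'_{j}=n_{j}$ ($j\leq i$), $n'_{j}=n_{j+1}$ ($i<j<d$), $n'_{d}=n_{i+1}+1$. The twists $\tau_{j}$ on the first $i$ positions are retained by taking $g'_{j}=g_{j}\in I_{G}^{m_{j}}$, while the remaining twists $\tau_{j+1}$ ($i<j<d$) together with the residual twist $\tau_{i}$ on the new $d$-th slot are absorbed into $g'_{j}\in\mathbb{F}_{p}[G]=I_{G}^{0}$ for $j>i$, after renaming base epsilons via the $G$-stability of $\nu_{N}$ and $U(n_{i+1}+1)$.

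The main bookkeeping obstacle is that $\Phi$ forces positions $d-1$ and $d$ to share a single twist $\tau'_{d-1}$, whereas the two twists that naturally appear are $\tau_{d-1}$ (from the shifted position) and $\tau_{i}$ (from the Galois-equivariance step), which differ in general. The lemma accommodates this by loosening the augmentation-ideal condition to $m'_{j}=0$ for $j>i$, so that $g'_{d-1}$ is unconstrained inside $\mathbb{F}_{p}[G]$; this slack, combined with a pre-rotation of the new base epsilon at position $d$ (which remains in $\nu_{N}$), realizes the mismatched twist and completes the verification that $\tilde{R}_{i}(w)\in V_{J'}$.
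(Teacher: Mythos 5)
Your proof is correct, and it hits all the essential ingredients the paper uses: the substitution absorbing $\eta_i$ into the $\eta_{i+1}$-sum (valid precisely because $n_i\leq n_{i+1}$), the appeal to Lemma \ref{lem:theta_sum} to land in $A(n_{i+1}+1)$, and the $G$-stability of $\nu_N$, $U(n)$, and $A(n)$. Where you differ from the paper is the handling of the Galois twists. The paper first \emph{reduces} to the case $m_{i+1}=\cdots=m_{d-1}=l=0$ and, after absorbing the $G$-action into the base $\epsilon_j$'s, to $g_{i+1}=\cdots=g_{d-1}=1$. This is legitimate because $J'$ imposes $m'_j=0$ for $j>i$, so proving the inclusion for the larger space $V_{\tilde J}$ suffices. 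With that reduction, $\tau_i=\tau_{i+1}=\cdots=\tau_{d-1}$, so $\tilde R_i$ \emph{commutes} with $F=\Phi_{g_1,\dots,g_i,1,\dots,1}$ and the twist mismatch you grapple with in your last paragraph never arises: the whole computation happens inside $F(\,\cdot\,)$ on un-twisted data. You instead expand every $g_j$ into group elements, carry all the $\tau_j$ through $\tilde R_i$, and then must reconcile the mismatched twists $\tau_{d-1}$ (new slot $d-1$) and $\tau_i$ (the $\tilde\theta$-factor) at the reassembly stage, using the slack $m'_{d-1}=0$ and a $G$-rotation of the base epsilon. This is sound, but it is bookkeeping you create for yourself; it also implicitly requires that the re-assembly of $g'_j=g_j\in I_G^{m_j}$ for $j\leq i$ from the expanded $\sigma_j$'s is linear in the right way (true, because the $\sigma'_j$ and the inner $\epsilon'$'s for $j>i$ do not depend on $\sigma_1,\dots,\sigma_i$ — worth stating). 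The paper's reduction buys a shorter and more transparent argument; your route buys nothing extra here, since none of the information lost in the reduction is needed for the conclusion.
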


\begin{proof}
It is enough to only consider the case where $n_{1},\dots,n_{d}\leq M$
and $m_{i+1}=\cdots=m_{d-1}=l=0$. Note that $V_{J}$ is spanned by
elements of the form
\[
u=F_{g_{1},\dots,g_{i},1,\dots,1}\left(\sum_{\substack{\eta_{j}\in U(n_{j})\\
(j=1,\dots,d)
}
}\biseqp{\eta_{1}\epsilon_{1},\dots,\eta_{d}\epsilon_{d}}{l_{1},\dots,l_{d}}\right)\qquad(g_{j}\in I_{G}^{m_{j}},\ \epsilon_{j}\in\nu_{N}).
\]
Thus, it is enough to show that $\tilde{R}_{i}(u)\in V_{J'}$ for
such $u$. Put $F=F_{g_{1},\dots,g_{i},1,\dots,1}$. Since $\tilde{R}_{i}$
is commutative with $F$, we have
\begin{align*}
\tilde{R}_{i}(u) & =F\circ\tilde{R}_{i}\left(\sum_{\substack{\eta_{j}\in U(n_{j})\\
(j=1,\dots,d)
}
}\biseqp{\eta_{1}\epsilon_{1},\dots,\eta_{d}\epsilon_{d}}{l_{1},\dots,l_{d}}\right)\\
 & =\delta_{0,l_{i}}F\left(\sum_{\substack{\eta_{j}\in U(n_{j})\\
(1\leq j\leq d,\,j\neq i+1)
}
}\biseqp{\eta_{1}\epsilon_{1},\dots,\widehat{\eta_{i+1}\epsilon_{i+1}},\dots,\eta_{d}\epsilon_{d}}{l_{1},\dots,\widehat{l_{i}},\dots,l_{d}}\otimes\sum_{\eta_{i+1}\in U(n_{i+1})}\tilde{\theta}\left(\eta_{i}^{-1}\eta_{i+1}\epsilon_{i}^{-1}\epsilon_{i+1}\right)\right).
\end{align*}
Here, 
\[
\sum_{\eta_{i+1}\in U(n_{i+1})}\tilde{\theta}\left(\eta_{i}^{-1}\eta_{i+1}\epsilon_{i}^{-1}\epsilon_{i+1}\right)=\sum_{\eta_{i+1}\in U(n_{i+1})}\tilde{\theta}\left(\eta_{i+1}\epsilon_{i}^{-1}\epsilon_{i+1}\right)
\]
since $\eta_{i}\in U(n_{i})\subset U(n_{i+1})$ by the assumption
$n_{i}\leq n_{i+1}$. Furthermore, by Lemma \ref{lem:theta_sum},
we have
\[
\sum_{\eta_{i+1}\in U(n_{i+1})}\tilde{\theta}\left(\eta_{i+1}\epsilon_{i}^{-1}\epsilon_{i+1}\right)\in A(n_{i+1}+1).
\]
Thus, $\tilde{R}_{i}(u)\in V_{J'}$ is proved.
\end{proof}
\begin{lem}
\label{lem:L_apply}Let $J=(n_{1},m_{1},\dots,n_{d-1},m_{d-1},n_{d},l)\in\mathbb{Z}^{2d}$
and $i\in\{1,\dots,d-1\}$. Assume that $n_{i}\leq n_{i+1}\leq\cdots\leq n_{d}$.
Then, we have
\[
\tilde{L}_{i}(V(J))\subset V_{J'}+V_{J''}
\]
where
\[
J'=(n_{1},m_{1},\dots,n_{i},m_{i},n_{i+2},0,\dots,n_{d},0,n_{i+1}+1)
\]
and
\[
J''=(n_{1},m_{1},\dots,n_{i-1},m_{i-1},n_{i},m_{i}+1,n_{i}+1,0,\dots,n_{i}+1,0,n_{i}+1,0).
\]
\end{lem}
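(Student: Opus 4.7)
The plan is to mirror the weight-$3$ computation leading to equation \eqref{eq:L1_apply}, generalized to arbitrary depth $d$. First I would reduce to the case where the $\Phi$-twist is trivial on coordinates beyond $i$, so the generator has the form $u = F(v)$ with $F = \Phi_{g_1,\ldots,g_i,1,\ldots,1}$ and $v = \sum_{\eta_j \in U(n_j)} \biseqp{\eta_1\epsilon_1,\ldots,\eta_d\epsilon_d}{l_1,\ldots,l_d}$. This is legitimate via the factorization $\Phi_{g_1,\ldots,g_{d-1}} = F \circ \Phi_{1,\ldots,1,g_{i+1},\ldots,g_{d-1}}$: expanding each $g_j$ ($j > i$) as an $\mathbb{F}_p$-combination of Galois elements and absorbing the resulting Galois actions into replacing each $\epsilon_j$ by a conjugate -- possible because every $U(n_j)$ is Galois-stable -- exhibits the full $V_J$ as the span of such reduced $u$'s. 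Moreover, $F$ commutes with $\tilde{L}_i$ through the new tensor: both $\epsilon_i$ and $\epsilon_{i+1}$ carry the common cumulative twist $g_1 \cdots g_i$ (since $g_{i+1}=1$), and Galois-equivariance of $\tilde{\theta}$ propagates this twist to the appended last slot, as in the proof of Lemma \ref{lem:R_apply}.

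Next I would rewrite $\sum_{\eta_j\in U(n_j)}$ as $\sum_{\sigma_j\in G(n_j)}$ using $\eta_j\epsilon_j = \sigma_j(\epsilon_j)$, apply $\tilde{L}_i$, and invoke Galois-equivariance of $\tilde{\theta}$ to obtain
\begin{align*}
\tilde{L}_i(v) = \sum_{\sigma_j \in G(n_j)} \biseqp{\sigma_1(\epsilon_1),\ldots,\widehat{\sigma_i(\epsilon_i)},\ldots,\sigma_d(\epsilon_d)}{l_1,\ldots,\widehat{l_i},\ldots,l_d} \otimes \tilde{\theta}\!\left(\tfrac{\sigma_i(\epsilon_i)}{\sigma_{i+1}(\epsilon_{i+1})}\right).
\end{align*}
The assumption $n_i \leq n_{i+1} \leq \cdots \leq n_d$ gives $G(n_i) \subseteq G(n_j)$ for every $j \geq i$, so I can substitute $\sigma_j = \sigma_i \tau_j$ with $\tau_j \in G(n_j)$ for each $j > i$. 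The $\sigma_i$-factor then acts diagonally on slots $i, i+1, \ldots, d$ of the new $d$-tensor, which is exactly the action $\Phi^{(i)}_{\sigma_i} := \Phi_{1,\ldots,1,\sigma_i,1,\ldots,1}$ placing $\sigma_i$ at the $i$-th $\Phi$-position.

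The decisive step is to write $\Phi^{(i)}_{\sigma_i} = 1 + \Phi^{(i)}_{\sigma_i - 1}$ and sum over $\sigma_i \in G(n_i)$, splitting $\tilde{L}_i(v)$ into a \emph{main term} and a \emph{correction term}. The main term (from the $1$-piece) has internal factor $\sum_{\eta \in U(n_i)} \tilde{\theta}(\eta\epsilon_i/\tau_{i+1}(\epsilon_{i+1}))$, which lies in $A(n_i+1)$ by Lemma \ref{lem:theta_sum}; after reinstating $F$, this contribution lies in $V_{J'}$. The correction term (from the $\sigma_i-1$ piece) carries a fresh factor $\sigma_i - 1 \in I_G$ applied at $\Phi$-position $i$ (thereby raising $m_i$ to $m_i+1$), and a further application of Lemma \ref{lem:theta_sum} forces the remaining $\tau_j$-summations ($j > i$) to telescope into $U(n_i+1)$-summations, placing this contribution in $V_{J''}$.

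The main obstacle is the careful index bookkeeping: tracking how $\sigma_j = \sigma_i \tau_j$ interacts with the $\Phi$-convention in which slot $d$ shares its cumulative Galois twist with slot $d-1$, and verifying that the correction term's parameters match $J''$ exactly -- in particular that every $n_j''$ for $j > i$ collapses to precisely $n_i + 1$, not merely to something $\leq n_i + 1$. The combinatorial pattern is identical to the $d=3$ case worked out just above equation \eqref{eq:L1_apply}, and requires only a uniform extension to higher depths.
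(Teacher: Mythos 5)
Your plan has the right ingredients (a Galois change of variables, the $\Phi$-splitting into identity plus an $I_G$-piece, and Lemma \ref{lem:theta_sum}), but you factor out the wrong variable, and the two halves of your argument are internally inconsistent.

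Concretely: your substitution $\sigma_j=\sigma_i\tau_j$ for $j>i$, followed by Galois-equivariance $\tilde{\theta}\bigl(\sigma_i(\epsilon_i)/\sigma_i\tau_{i+1}(\epsilon_{i+1})\bigr)=\sigma_i\bigl(\tilde{\theta}(\epsilon_i/\tau_{i+1}(\epsilon_{i+1}))\bigr)$, pulls the $\sigma_i$-dependence \emph{entirely} out of the last slot. After you write $\tilde{L}_i(v)=\sum_{\sigma_i\in G(n_i)}\Phi^{(i)}_{\sigma_i}(w)$, the inner element $w$ is genuinely independent of $\sigma_i$, so the ``$1$-piece'' of $\Phi^{(i)}_{\sigma_i}=\mathrm{id}+\Phi^{(i)}_{\sigma_i-1}$ contributes the scalar multiple $|G(n_i)|\,w=p^{n_i}w$ (i.e.\ $0$ in $\mathbb{F}_p$ if $n_i\geq1$, and $w$ if $n_i=0$). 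There is no residual $\sigma_i$-sum left to put inside $\tilde{\theta}$, so the ``internal factor $\sum_{\eta\in U(n_i)}\tilde{\theta}(\eta\epsilon_i/\tau_{i+1}(\epsilon_{i+1}))$'' you claim for the main term simply does not arise---and even if it did, $A(n_i+1)$ is the \emph{wrong} target: $J'$ requires the last slot to lie in $A(n_{i+1}+1)$, a strictly smaller space when $n_i<n_{i+1}$. The deeper issue is that $w$ itself is not of the product form defining $V_J$: its last slot $\tilde{\theta}(\epsilon_i/\tau_{i+1}(\epsilon_{i+1}))$ is coupled to its $i$-th slot, which carries $\tau_{i+1}(\epsilon_{i+1})$, via the shared variable $\tau_{i+1}$. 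Your factorization of $\sigma_i$ does not touch this coupling, and resolving it is the entire content of the lemma. The case $n_i=0$ exposes this most sharply: then $G(n_i)=\{1\}$, your correction term vanishes identically, the main term is exactly $w$, and you would still need a fresh argument to place $w$ in $V_{J'}+V_{J''}$.

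The paper's proof factors the variable of the \emph{surviving} slot $i+1$, not the removed slot $i$: it writes $\eta_{i+1}\epsilon_{i+1}=\sigma\sigma'(\epsilon_{i+1})$ with $\sigma$ ranging over a set of coset representatives of $G(n_{i+1})/G(n_i)$ and $\sigma'\in G(n_i)$, and pulls the $\sigma$-twist into the $\Phi$-position $i$. Because $\eta_i$ (the removed slot's variable) is kept inside the $\tilde{\theta}$-sum, this decouples slot $i$ (now a $\sigma'\in G(n_i)$-sum on $\epsilon_{i+1}$) from the last slot; the combined sum over $\sigma$ and $\eta_i$ inside $\tilde{\theta}$ telescopes to a full $U(n_{i+1})$-sum, landing in $A(n_{i+1}+1)$ as $J'$ demands. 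The ordering hypothesis $n_{i+1}\leq\cdots\leq n_d$ enters precisely to give $\sigma^{-1}(w')=w'$, which keeps the intermediate slots untwisted by $\sigma$; your proposal uses the hypothesis only to enable the $\sigma_j=\sigma_i\tau_j$ substitution, which by itself does not do the work. A correct version of your plan would factor over $H=G(n_{i+1})/G(n_i)$ rather than over $G(n_i)$, reproducing the paper's argument.
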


\begin{proof}
Without loss of generality, we can assume that $n_{1},\dots,n_{d}\leq M$
and $m_{i+1}=\cdots=m_{d-1}=l=0$. Note that $V_{J}$ is spanned by
elements of the form
\[
u=F_{g_{1},\dots,g_{i},1,\dots,1}\left(\sum_{\substack{\eta_{j}\in U(n_{j})\\
(j=1,\dots,d)
}
}\biseqp{\eta_{1}\epsilon_{1},\dots,\eta_{d}\epsilon_{d}}{l_{1},\dots,l_{d}}\right)\qquad(g_{j}\in I_{G}^{m_{j}},\ \epsilon_{j}\in\nu_{N}).
\]
Thus, it is enough to show that $\tilde{L}_{i}(u)\in V_{J'}+V_{J''}$
for such $u$. Put $F=F_{g_{1},\dots,g_{i},1,\dots,1}$. Since $\tilde{L}_{i}$
is commutative with $F$, we have
\begin{align*}
\tilde{L}_{i}(u) & =F\circ\tilde{L}_{i}\left(\sum_{\substack{\eta_{j}\in U(n_{j})\\
(j=1,\dots,d)
}
}\biseqp{\eta_{1}\epsilon_{1},\dots,\eta_{d}\epsilon_{d}}{l_{1},\dots,l_{d}}\right)\\
 & =\delta_{0,l_{i}}\sum_{\eta_{i+1}\in U(n_{i+1})}F\left(w\otimes\biseqp{\eta_{i+1}\epsilon_{i+1}}{l_{i+1}}\otimes w'\otimes\sum_{\eta_{i}\in U(n_{i})}\tilde{\theta}\left(\eta_{i}\eta_{i+1}^{-1}\epsilon_{i}\epsilon_{i+1}^{-1}\right)\right).
\end{align*}
where
\[
w=\sum_{\substack{\eta_{j}\in U(n_{j})\\
(1\leq j\leq i-1)
}
}\biseqp{\eta_{1}\epsilon_{1},\dots,\eta_{i-1}\epsilon_{i-1}}{l_{1},\dots,l_{i-1}}
\]
and
\[
w'=\sum_{\substack{\eta_{j}\in U(n_{j})\\
(i+2\leq j\leq d)
}
}\biseqp{\eta_{i+2}\epsilon_{i+2},\dots,\eta_{d}\epsilon_{d}}{l_{i+2},\dots,l_{d}}.
\]

(i) The case $n_{i}=n_{i+1}$. In this case, we have 
\[
\sum_{\eta_{i}\in U(n_{i})}\tilde{\theta}\left(\eta_{i}\eta_{i+1}^{-1}\epsilon_{i}\epsilon_{i+1}^{-1}\right)=\sum_{\eta_{i}\in U(n_{i})}\tilde{\theta}\left(\eta_{i}\epsilon_{i}\epsilon_{i+1}^{-1}\right)
\]
since $\eta_{i+1}\in U(n_{i+1})=U(n_{i})$. Furthermore, by Lemma
\ref{lem:theta_sum}, we have
\[
\sum_{\eta_{i}\in U(n_{i})}\tilde{\theta}\left(\eta_{i}\epsilon_{i}\epsilon_{i+1}^{-1}\right)\in A(n_{i}+1).
\]
Hence $\tilde{L}_{i}(u)\in V_{J'}$ is proved for the case $n_{i}=n_{i+1}$.

(ii) The case $n_{i}<n_{i+1}$. Let $H\subset G(n_{i+1})$ be any
complete set of representatives of $G(n_{i+1})/G(n_{i})$. Then,
\begin{align*}
 & \sum_{\eta_{i+1}\in U(n_{i+1})}F\left(w\otimes\biseqp{\eta_{i+1}\epsilon_{i+1}}{l_{i+1}}\otimes w'\otimes\sum_{\eta_{i}\in U(n_{i})}\tilde{\theta}\left(\eta_{i}\eta_{i+1}^{-1}\epsilon_{i}\epsilon_{i+1}^{-1}\right)\right)\\
 & =\sum_{\sigma\in H}\sum_{\sigma'\in G(n_{i})}F\left(w\otimes\biseqp{\sigma(\sigma'(\epsilon_{i+1}))}{l_{i+1}}\otimes w'\otimes\sum_{\eta_{i}\in U(n_{i})}\tilde{\theta}\left(\eta_{i}\epsilon_{i}\sigma(\epsilon_{i+1}^{-1})\right)\right)\\
 & =\sum_{\sigma\in H}\sum_{\sigma'\in G(n_{i})}F_{g_{1},\dots,g_{i-1},g_{i}\sigma,1,\dots,1}\left(w\otimes\biseqp{\sigma'(\epsilon_{i+1})}{l_{i+1}}\otimes\sigma^{-1}(w')\otimes\sum_{\eta_{i}\in U(n_{i})}\tilde{\theta}\left(\eta_{i}\sigma^{-1}(\epsilon_{i})\epsilon_{i+1}^{-1}\right)\right).
\end{align*}
Here, since $n_{i+2},\dots,n_{d}\geq n_{i+1}$ by the assumption,
we have $\sigma^{-1}(w')=w'$ for any $\sigma\in H$. Thus
\begin{align*}
 & =\sum_{\sigma\in H}\sum_{\sigma'\in G(n_{i})}F_{g_{1},\dots,g_{i-1},g_{i}\sigma,1,\dots,1}\left(w\otimes\biseqp{\sigma'(\epsilon_{i+1})}{l_{i+1}}\otimes w'\otimes\sum_{\eta_{i}\in U(n_{i})}\tilde{\theta}\left(\eta_{i}\sigma^{-1}(\epsilon_{i})\epsilon_{i+1}^{-1}\right)\right)\\
 & =F\left(w\otimes\left(\sum_{\sigma'\in G(n_{i})}\biseqp{\sigma'(\epsilon_{i+1})}{l_{i+1}}\right)\otimes w'\otimes\sum_{\sigma\in H}\sum_{\eta_{i}\in U(n_{i})}\tilde{\theta}\left(\eta_{i}\sigma^{-1}(\epsilon_{i})\epsilon_{i+1}^{-1}\right)\right)\\
 & \quad+\sum_{\sigma\in H}F_{g_{1},\dots,g_{i-1},g_{i}(\sigma-1),1,\dots,1}\left(w\otimes\left(\sum_{\sigma'\in G(n_{i})}\biseqp{\sigma'(\epsilon_{i+1})}{l_{i+1}}\right)\otimes w'\otimes\sum_{\eta_{i}\in U(n_{i})}\tilde{\theta}\left(\eta_{i}\sigma^{-1}(\epsilon_{i})\epsilon_{i+1}^{-1}\right)\right).
\end{align*}
Here, by using Lemma \ref{lem:theta_sum}, we have
\begin{align*}
 & F\left(w\otimes\left(\sum_{\sigma'\in G(n_{i})}\biseqp{\sigma'(\epsilon_{i+1})}{l_{i+1}}\right)\otimes w'\otimes\sum_{\sigma\in H}\sum_{\eta_{i}\in U(n_{i})}\tilde{\theta}\left(\eta_{i}\sigma^{-1}(\epsilon_{i})\epsilon_{i+1}^{-1}\right)\right)\\
 & =F\left(w\otimes\left(\sum_{\sigma'\in G(n_{i})}\biseqp{\sigma'(\epsilon_{i+1})}{l_{i+1}}\right)\otimes w'\otimes\sum_{\eta\in U(n_{i+1})}\tilde{\theta}\left(\eta\epsilon_{i}\epsilon_{i+1}^{-1}\right)\right)\\
 & \in V_{J'}
\end{align*}
and
\begin{align*}
 & F_{g_{1},\dots,g_{i-1},g_{i}(\sigma-1),1,\dots,1}\left(w\otimes\left(\sum_{\sigma'\in G(n_{i})}\biseqp{\sigma'(\epsilon_{i+1})}{l_{i+1}}\right)\otimes w'\otimes\sum_{\eta_{i}\in U(n_{i})}\tilde{\theta}\left(\eta_{i}\sigma^{-1}(\epsilon_{i})\epsilon_{i+1}^{-1}\right)\right)\\
 & \in V_{(n_{1},m_{1},\dots,n_{i-1},m_{i-1},n_{i},m_{i}+1,n_{i+2},0,\dots,n_{d},0,n_{i}+1,0)}\subset V_{J''},
\end{align*}
where the last inclusion follows from 
\[
n_{i+2},\dots,n_{d}\geq n_{i+1}\geq n_{i}+1.
\]
Hence, $\tilde{L}_{i}(u)\in V_{J'}+V_{J''}$ is proved for the case
$n_{i}<n_{i+1}$.
\end{proof}
\begin{lem}
\label{lem:S_apply}Let $J=(n_{1},m_{1},\dots,n_{d-1},m_{d-1},n_{d},l)\in\mathbb{Z}^{2d}$
and $u\in V_{J}$. Then, we have
\[
\tilde{L}_{i}(u)\in V_{(n_{1},m_{1},\dots,n_{d-1},m_{d-1},n_{d},l+1)}.
\]
\end{lem}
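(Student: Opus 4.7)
\textbf{Proof proposal for Lemma \ref{lem:S_apply}.} I interpret the conclusion as $S(u)\in V_{J'}$ with $J'=(n_{1},m_{1},\dots,n_{d-1},m_{d-1},n_{d},l+1)$; the symbol $\tilde L_i$ in the displayed statement appears to be a typo for $S$, since $\tilde L_i$ is already the content of Lemma \ref{lem:L_apply} and the label here is \texttt{lem:S\_apply}. The plan is formal and rests on two observations about how $S$ interacts with the generators of $V_J$.

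First, from Definition \ref{def:LRS},
\[
S\Bigl(\biseqp{\epsilon_1,\dots,\epsilon_d}{l_1,\dots,l_d}\Bigr)=\sum_{r=l_d+1}^{l_{d-1}+l_d}(-1)^{r-l_d}\binom{r}{l_d}\,\biseqp{\epsilon_1,\dots,\epsilon_d}{l_1,\dots,l_{d-2},\,l_{d-1}+l_d-r,\,r},
\]
so $S$ leaves the tuple $(\epsilon_1,\dots,\epsilon_d)$ completely untouched and only modifies the last two $l$-entries; crucially, every term produced has last $l$-coordinate $r\ge l_d+1$. Second, the automorphism $\Phi_{g_1,\dots,g_{d-1}}$ and the averaging operator $\sum_{\eta_i\in U(n_i)}$ both act only on the $\epsilon$-slots, while $S$ acts only on the $l$-slots. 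Consequently $S$ commutes with $\Phi_{g_1,\dots,g_{d-1}}$ and with each $\sum_{\eta_i\in U(n_i)}$.

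It therefore suffices to verify the inclusion on a single generator of $V_J$. Take
\[
v=\Phi_{g_{1},\dots,g_{d-1}}\Biggl(\sum_{\substack{\eta_{i}\in U(n_{i})\\(i=1,\dots,d)}}\biseqp{\eta_{1}\epsilon_{1},\dots,\eta_{d}\epsilon_{d}}{l_{1},\dots,l_{d}}\Biggr),\qquad l_{d}\ge l,
\]
with $g_i\in I_G^{m_i}$ and $\epsilon_i\in\nu_N$. Pulling $S$ past $\Phi$ and past the group averages using the commutation above,
\[
S(v)=\sum_{r=l_d+1}^{l_{d-1}+l_d}(-1)^{r-l_d}\binom{r}{l_d}\,\Phi_{g_{1},\dots,g_{d-1}}\Biggl(\sum_{\substack{\eta_{i}\in U(n_{i})\\(i=1,\dots,d)}}\biseqp{\eta_{1}\epsilon_{1},\dots,\eta_{d}\epsilon_{d}}{l_{1},\dots,l_{d-2},\,l_{d-1}+l_d-r,\,r}\Biggr).
\]
Each summand on the right is exactly of the generating form for $V_{J'}$: the $n_i$'s, $m_i$'s, $\epsilon_i$'s and $g_i$'s are unchanged, the coordinates $l_1',\dots,l_{d-1}'=l_1,\dots,l_{d-2},l_{d-1}+l_d-r$ lie in $\mathbb Z_{\ge0}$ (using $r\le l_{d-1}+l_d$), and the last coordinate satisfies $l_d'=r\ge l_d+1\ge l+1$, which is precisely the defining constraint of $V_{J'}$. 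Hence $S(v)\in V_{J'}$, and by linearity $S(V_J)\subset V_{J'}$.

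There is no genuine obstacle in this lemma; the content is simply that $S$ is strictly upper-triangular with respect to the filtration by $l_d$ and commutes with all the symmetrization operations used in the definition of $V_J$. Together with Lemmas \ref{lem:R_apply} and \ref{lem:L_apply}, this is what is needed to run the descending lexicographic induction that proves Theorem \ref{thm:strong_main}, since the index $l$ grows strictly under $S$ while $S$ does not spoil the other indices.
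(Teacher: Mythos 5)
Your identification of $\tilde L_i$ as a typo for $S$ is correct (the label and the subsequent use in Lemma~\ref{lem:LRS_J_J'} both confirm it), and your argument is exactly the paper's argument, spelled out in more detail: the paper's proof is the single observation that $r>l_d$ in the definition of $S$, which, together with the trivial commutation of $S$ with $\Phi$ and the group averages (since $S$ only touches $l$-slots and $\Phi$ only touches $\epsilon$-slots), gives the inclusion.
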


\begin{proof}
It follows from the definition since $r$ in the definition of $S$
(Definition \ref{def:LRS}) is greater than $l_{d}$.
\end{proof}
\begin{defn}
Define $\mathcal{J}$ as a set of tuples $(n_{1},m_{1},n_{2},\dots,m_{d-1},n_{d},l)\in\mathbb{Z}_{\geq0}^{2d}$
satisfying
\begin{itemize}
\item $n_{i}\leq n_{i+1}$ for $i=1,\dots,d-1$,
\item $n_{i}=n_{i+1}$ implies $m_{i}=0$ for $i=1,\dots,d-1$. 
\end{itemize}
\end{defn}

\begin{defn}
We define a total order structure $\prec$ on $\mathcal{J}$ by the
lexicographic order, i.e.,
\[
(s_{1},\dots,s_{2d})\prec(s_{1}',\dots,s_{2d}')
\]
if and only if there exists $j\in\{1,\dots,d\}$ such that $s_{i}=s_{i}'$
for $i<j$ and $s_{j}<s_{j}'$.
\end{defn}

\begin{lem}
\label{lem:J_J'}Let 
\[
J=(n_{1},m_{1},\dots,n_{d-1},m_{d-1},n_{d},l)\in\mathcal{J}.
\]
Fix $1\leq i<d$ and put 
\[
J'=(n_{1},m_{1},\dots,n_{i},m_{i},\min(n_{i+2},n_{i+1}+1),0,\dots,\min(n_{d},n_{i+1}+1),0,n_{i+1}+1,0).
\]
Then $J\prec J'$.
\end{lem}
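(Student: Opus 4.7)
The plan is a direct coordinate-wise comparison of $J$ and $J'$ under the lexicographic order $\prec$. By construction, the first $2i$ coordinates of $J$ and $J'$ are identical, both reading $(n_1, m_1, \dots, n_i, m_i)$. So the task reduces to exhibiting the first position $p > 2i$ at which $J$ and $J'$ differ and checking that $J_p < J'_p$ at that position.

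To locate that position, I would let $k \in \{1, \dots, d-i\}$ be the maximal integer such that $n_{i+1} = n_{i+2} = \cdots = n_{i+k}$; equivalently, either $k = d-i$, or $k < d-i$ and $n_{i+k} < n_{i+k+1}$. Since $J \in \mathcal{J}$, the second defining condition of $\mathcal{J}$ applied to the equalities $n_{i+j} = n_{i+j+1}$ for $j = 1, \dots, k-1$ forces $m_{i+j} = 0$ for each such $j$. The mechanism behind $J'$ is that the replacement rule $n'_{i+j} = \min(n_{i+j+1}, n_{i+1}+1)$ returns exactly $n_{i+1}$ whenever $n_{i+j+1} = n_{i+1}$, and jumps to $n_{i+1}+1$ precisely when $n_{i+j+1} > n_{i+1}$ (with the final slot $n'_d$ hard-coded to $n_{i+1}+1$).

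Combining these observations, I verify the following entries for $2i < p \le 2i+2k-1$. At the odd positions $p = 2i+2j-1$ with $1 \le j \le k-1$, both $J$ and $J'$ carry the value $n_{i+1}$. At the even positions $p = 2i+2j$ with $1 \le j \le k-1$, both tuples carry $0$. Hence the first differing position is $p = 2i+2k-1$: at that position $J$ carries $n_{i+k} = n_{i+1}$, while $J'$ carries $n_{i+1}+1$ (either from $\min(n_{i+k+1}, n_{i+1}+1) = n_{i+1}+1$ when $k < d-i$, using $n_{i+k+1} \ge n_{i+1}+1$; or from the prescribed tail entry $n_{i+1}+1$ when $k = d-i$). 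Thus $J_p < J'_p$, so $J \prec J'$.

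There is no substantive obstacle here, only careful indexing. The one place requiring attention is the defining condition \emph{$n_j = n_{j+1} \Rightarrow m_j = 0$} of $\mathcal{J}$: without it, a nonzero $m_{i+j}$ for some $j < k$ (whose counterpart in $J'$ is $0$) could produce the opposite inequality $J \succ J'$ at an even position strictly before $2i+2k-1$. This condition is exactly what rules that case out, and is what makes the definition of $\mathcal{J}$ well adapted to the iteration performed in Lemmas \ref{lem:R_apply} and \ref{lem:L_apply}.
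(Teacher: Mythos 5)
Your proof is correct and follows essentially the same route as the paper's: both identify the maximal run of consecutive equal $n$-values starting at index $i+1$ (your $k$, the paper's $j = i+k$), use the defining condition of $\mathcal{J}$ to force the intervening $m$-entries of $J$ to vanish so they match the zeros in $J'$, and then observe that the first disagreement occurs at the $n$-slot just past that run, where $J$ carries $n_{i+1}$ and $J'$ carries $n_{i+1}+1$. Your write-up is simply a more explicit coordinate-by-coordinate rendering of the paper's terser display.
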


\begin{proof}
Choose maximal $j\in\{i+1,\dots,d\}$ such that $n_{i+1}=n_{j}$.
Put $n=n_{i+1}=n_{j}$. Then
\begin{align*}
J & =(E_{1},m_{1},\cdots,n_{i},m_{i},\{n,0\}^{j-(i+1)},n,m_{j},n_{j+1},\dots,m_{d-1},n_{d};l)\\
J' & =(n_{1},m_{1},\dots,n_{i},m_{i},\{n,0\}^{j-(i+1)},n+1,0,n+1,\dots,0,n+1;0).
\end{align*}
Thus $J\prec J'$.
\end{proof}
\begin{lem}
\label{lem:LRS_J_J'}Let $J\in\mathcal{J}$. Then, for $u\in V_{J}$,
we have
\[
\tilde{L}_{i}(u),\tilde{R}_{i}(u),S(u)\in\sum_{\substack{J'\in\mathcal{J}\\
J'\succ J
}
}V_{J'}
\]
where $1\leq i<d$.
\end{lem}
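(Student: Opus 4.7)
I would treat the three operators $S$, $\tilde{R}_i$, and $\tilde{L}_i$ separately, invoking the application Lemmas \ref{lem:S_apply}, \ref{lem:R_apply}, and \ref{lem:L_apply}, and then in each case verifying that the resulting index tuple lies in $\mathcal{J}$ and strictly exceeds $J$ in the order $\prec$. The $S$ case is essentially immediate: Lemma \ref{lem:S_apply} places $S(u)$ in $V_{J_S}$, where $J_S$ differs from $J$ only by replacing $l$ with $l+1$; since the defining conditions of $\mathcal{J}$ involve only the $n_k$'s and the vanishing of $m_k$'s forced by coincidences among the $n_k$'s, $J_S \in \mathcal{J}$ and $J \prec J_S$ are immediate.

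For $\tilde{R}_i$: using $n_i \leq n_{i+1}$ from $J \in \mathcal{J}$, Lemma \ref{lem:R_apply} places $\tilde{R}_i(u)$ in $V_{J_R}$ for a tuple $J_R$ whose final $n$-coordinate equals $n_{i+1}+1$ and whose other $n$-coordinates past position $i$ are shifted down by one from those of $J$. In general $J_R \notin \mathcal{J}$, because some $n_k$ with $k > i+1$ may exceed $n_{i+1}+1$. My plan is to show $V_{J_R} \subset V_{J^*}$, where $J^*$ replaces each such $n_k$ with $\min(n_k, n_{i+1}+1)$; this will follow from the coset decomposition $U(n_k) = \bigsqcup_{\eta_0} \eta_0 U(n_{i+1}+1)$ combined with the fact that multiplication by $\mu_{N/q}$ preserves $\nu_N$, so that each $\eta_0 \epsilon_k$ can be absorbed into a new element of $\nu_N$ (and the conjugation $\Phi_{g_1,\dots,g_{d-1}}$ passes through the decomposition without interference). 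Lemma \ref{lem:J_J'} will then give $J^* \in \mathcal{J}$ together with $J \prec J^*$.

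For $\tilde{L}_i$: Lemma \ref{lem:L_apply} writes $\tilde{L}_i(u) \in V_{J_R} + V_{J_L''}$, where the first summand can be handled exactly as in the $\tilde{R}_i$ case and $J_L''$ is the tuple with $m_i$ incremented to $m_i+1$ and all subsequent $n_k$'s set equal to $n_i+1$. I would check directly that $J_L'' \in \mathcal{J}$ (the $n_k$-sequence is non-decreasing, the new $m_k$'s past position $i$ vanish, and any coincidences among the initial $n_k$'s inherit $m_k = 0$ from $J$) and that $J \prec J_L''$ (the two tuples agree through position $2i-1$, and at position $2i$ we have $m_i + 1 > m_i$).

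The hard part is the containment $V_{J_R} \subset V_{J^*}$ appearing in both the $\tilde{R}_i$ and $\tilde{L}_i$ cases: this is the single point where the definition of $V_J$ must be opened up and reorganized via coset decomposition. Everything else reduces to bookkeeping with the definition of $\mathcal{J}$ and the lexicographic order.
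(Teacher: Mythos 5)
Your proposal matches the paper's proof closely: the paper likewise reduces to Lemmas \ref{lem:S_apply}, \ref{lem:R_apply}, and \ref{lem:L_apply}, introduces the same $\min$-capped tuple $J''=(n_{1},m_{1},\dots,n_{i},m_{i},\min(n_{i+2},n_{i+1}+1),0,\dots,\min(n_{d},n_{i+1}+1),0,n_{i+1}+1,0)$ to force the monotonicity condition, and invokes Lemma \ref{lem:J_J'} for the lexicographic comparison. One small imprecision: Lemma \ref{lem:J_J'} delivers only the ordering $J\prec J^{*}$, not the membership $J^{*}\in\mathcal{J}$, so that membership (and likewise $J_{L}''\in\mathcal{J}$) must be checked separately --- it is routine, and the paper asserts it without comment, but it is not a consequence of Lemma \ref{lem:J_J'}.
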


\begin{proof}
$S(u)\in\sum_{J'\succ J}V_{J'}$ follows from Lemma \ref{lem:S_apply}.
Let
\[
(n_{1},m_{1},\dots,n_{d-1},m_{d-1},n_{d},l):=J.
\]
Then, by Lemmas \ref{lem:R_apply} and \ref{lem:L_apply}, we have
\begin{align*}
\tilde{L}_{i}(u),\tilde{R}_{i}(u) & \in V_{(n_{1},m_{1},\dots,n_{i-1},m_{i-1},n_{i},m_{i}+1,n_{i}+1,0,\dots,n_{i}+1,0,n_{i}+1,0)}+V_{(n_{1},m_{1},\dots,n_{i},m_{i},n_{i+2},0,\dots,n_{d},0,n_{i+1}+1,0)}\\
 & \subset V_{J'}+V_{J''}
\end{align*}
where
\[
J'=(n_{1},m_{1},\dots,n_{i-1},m_{i-1},n_{i},m_{i}+1,n_{i}+1,0,\dots,n_{i}+1,0,n_{i}+1,0)
\]
and
\[
J''=(n_{1},m_{1},\dots,n_{i},m_{i},\min(n_{i+2},n_{i+1}+1),0,\dots,\min(n_{d},n_{i+1}+1),0,n_{i+1}+1,0).
\]
Here
\[
J',J''\in\mathcal{J}
\]
and, by Lemma \ref{lem:J_J'}, 
\[
J',J''\succ J,
\]
which completes the proof.
\end{proof}
\begin{lem}
\label{lem:E_J_J'}Let $J\in\mathcal{J}$. Then, for $u\in V_{J}$,
we have
\[
(\mathcal{E}_{d}-\mathrm{id})(u)\in\sum_{\substack{J'\in\mathcal{J}\\
J'\succ J
}
}V_{J'}.
\]
\end{lem}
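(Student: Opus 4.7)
The plan is to simply combine the two preceding lemmas, which essentially do all the work. By Lemma \ref{lem:E_to_LRS}, the operator $\mathcal{E}_d - \mathrm{id}$ decomposes as
\[
\mathcal{E}_d - \mathrm{id} = \sum_{i=1}^{d-1} \tilde{L}_i - \sum_{i=1}^{d-1} \tilde{R}_i + S,
\]
so it suffices to control each summand on $V_J$ separately.

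For any $u \in V_J$ with $J \in \mathcal{J}$, Lemma \ref{lem:LRS_J_J'} tells us that each of $\tilde{L}_i(u)$, $\tilde{R}_i(u)$, and $S(u)$ lies in $\sum_{J' \succ J} V_{J'}$. Since $\sum_{J' \succ J} V_{J'}$ is an $\mathbb{F}_p$-subspace, the signed sum
\[
(\mathcal{E}_d - \mathrm{id})(u) = \sum_{i=1}^{d-1} \tilde{L}_i(u) - \sum_{i=1}^{d-1} \tilde{R}_i(u) + S(u)
\]
also lies in this subspace, which is exactly the claim.

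There is no obstacle here: the real work has already been done in establishing Lemmas \ref{lem:R_apply}, \ref{lem:L_apply}, \ref{lem:S_apply} (the one-step action of each operator on $V_J$) and Lemma \ref{lem:J_J'} (the lexicographic monotonicity needed to verify $J' \succ J$), which together yield Lemma \ref{lem:LRS_J_J'}. Once those are in hand, the passage to $\mathcal{E}_d - \mathrm{id}$ is a one-line linearity argument. This lemma is the final piece feeding into Theorem \ref{thm:strong_main}: since $\mathcal{J}$ contains only finitely many tuples $J$ for which $V_J \neq \{0\}$ in a fixed weight (the constraints $n_i \leq M$ and $I_G^{m_i} = 0$ for $m_i \geq p^M$ bound the relevant indices), iterating this lemma shows that $(\mathcal{E}_d - \mathrm{id})^n$ annihilates the weight $k$ part for $n$ large, proving unipotence.
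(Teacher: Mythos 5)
Your proof is correct and matches the paper's exactly: the paper also cites Lemma \ref{lem:E_to_LRS} to decompose $\mathcal{E}_d - \mathrm{id}$ into $\tilde{L}_i$, $\tilde{R}_i$, and $S$, then applies Lemma \ref{lem:LRS_J_J'} to each summand. The extra context you provide about the downstream use in Theorem \ref{thm:strong_main} is accurate but not part of the proof of this lemma.
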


\begin{proof}
It follows from Lemmas \ref{lem:E_to_LRS} and \ref{lem:LRS_J_J'}.
\end{proof}
Now, we can show the main theorem of this paper.
\begin{proof}[Proof of Theorem \ref{thm:strong_main}]
Fix $k\geq d\geq2$ and put $\mathcal{J}_{\mathrm{nonzero}}=\{J\mid(V_{J})_{k}\neq\{0\}\}$
where $(V_{J})_{k}$ is the weight $k$ part of $V_{J}$. Since $I_{G}^{m}=\{0\}$
for $m\geq(\#G)=N/q$, $\mathcal{J}_{\mathrm{nonzero}}$ is a finite
set. Put $\mathcal{J}_{\mathrm{nonzero}}=\{J_{0}\prec J_{1}\prec\cdots\prec J_{s}\}$
where $s=\#\mathcal{J}_{\mathrm{nonzero}}-1$. Define the decreasing
filtration 
\[
(\mathcal{W}^{\otimes d})_{k}=\mathcal{V}_{0}\supset\mathcal{V}_{1}\supset\cdots\supset\mathcal{V}_{s+1}=\{0\}
\]
by
\[
\mathcal{V}_{i}=\sum_{h=i}^{s}(V_{J(h)})_{k}\qquad(i=0,1,\dots,s+1).
\]
Then, by Lemma \ref{lem:E_J_J'}, 
\[
((\mathcal{E}_{d})_{k}-\mathrm{id})(\mathcal{V}_{i})\subset\mathcal{V}_{i+1}
\]
for $0\leq i\leq s$. Thus 
\[
((\mathcal{E}_{d})_{k}-\mathrm{id})^{s+1}((\mathcal{W}^{\otimes d})_{k})=\{0\},
\]
which completes the proof.
\end{proof}

\section{Some observations for the dimensions of weight $2$ cyclotomic multiple
zeta values of general level}

Recall that $P(N,k,d)$ is, roughly speaking, the statement that the
periods of mixed Tate motives of level $N$, weight $k$ and coradical
degree $d$ is spanned by the cyclotomic multiple zeta values of level
$N$, weight $k$, and depth $d$ (see Definition \ref{def:P_Nkd}
for the precise definition). In the previous sections, we proved $P(N,k,d)$
when $N\neq1$ is a pow of $2$ or $3$. The validity of $P(N,k,d)$
for general $N$ remains a mystery, even in the case where $k=d=2$.
For $N\in\mathbb{Z}_{\geq1}$, let $\kappa(N)$ be the dimension of
the cokernel of
\begin{equation}
(X\otimes X)_{2}\xrightarrow{D_{2}^{\mathrm{iter}}}(Y\otimes Y)_{2}\label{eq:D_weight2}
\end{equation}
where 
\begin{align*}
D_{2}^{\mathrm{iter}}\left(\biseq{\epsilon_{1},\epsilon_{2}}{0,0}\right) & =\left\langle \frac{\epsilon_{1}}{\epsilon_{2}};0\right\rangle \otimes\left\langle \epsilon_{2};0\right\rangle -\left\langle \frac{\epsilon_{2}}{\epsilon_{1}};0\right\rangle \otimes\left\langle \epsilon_{1};0\right\rangle +\left\langle \epsilon_{2};0\right\rangle \otimes\left\langle \epsilon_{1};0\right\rangle .
\end{align*}
Then, by Proposition \ref{prop:restate_P_Nkd}, $P(N,2,2)$ holds
if and only if $\kappa(N)=0$. Note that the dimension of the $\mathbb{Q}$-vector
space generated by the motivic cyclotomic multiple zeta values of
weight $2$ and level $N$ is given by 
\[
(\varphi(N)/2+\nu(N)-1)^{2}+\varphi(N)+\nu(N)-\kappa(N)
\]
 for $N\geq3$ where $\varphi$ is Euler's totient function and $\nu(N)$
is a number of prime divisors of $N$ (see \cite{DelGon}). In this
section, we discuss about the values of $\kappa(N)$.

Since the map (\ref{eq:D_weight2}) is explicit, the values of $\kappa(N)$
can be computed numerically. By Goncharov's result \cite{Gon-p5},
$\kappa(p)=(p^{2}-1)/24$ for prime $p\geq5$. However, the explicit
formula for the values of $\kappa(N)$ for general $N$ is not known.
Table \ref{tab:kappa} shows the values of $\kappa(N)$ for non-primes
$N$ obtained by our numeric calculation \cite{Hgithub}. 

\subsection{The values of $\kappa(p^{2})$ and $\kappa(p^{3})$}

Let $p\geq5$ be a prime number. In \cite{Zhao-st}, it is conjectured
that 
\begin{equation}
\kappa(p^{2})\stackrel{?}{=}\frac{p(p-1)(p-2)(p-3)}{24}\label{eq:kappa_p_to_2}
\end{equation}
(Conjecture 8.7 (a) in \cite{Zhao-st}) and 
\begin{equation}
\kappa(p^{3})\stackrel{?}{=}\frac{p^{2}(p-1)(p-2)(p-3)}{24}\label{eq:kappa_p_to_3}
\end{equation}
(Conjecture 8.8 (a) in \cite{Zhao-st}). Table \ref{tab:kappa} shows
that the conjecture (\ref{eq:kappa_p_to_2}) holds when $p\leq19$
and the conjecture (\ref{eq:kappa_p_to_3}) does not hold when $p=7$.

\subsection{The values of $\kappa(2^{a}3^{b})$}

Let $a,b\in\mathbb{Z}_{\geq0}$. The main theorem of this paper implies
$\kappa(2^{a}3^{b})=0$ if $a=0$ or $b=0$. Based on the numerical
result in Table \ref{tab:kappa}, we conjecture the following.
\begin{conjecture}
For $a,b\in\mathbb{Z}_{\geq0}$, we have $\kappa(2^{a}3^{b})=0$.
\end{conjecture}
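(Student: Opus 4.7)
The plan is to attempt an adaptation of the modulo $p$ reduction machinery of Sections 3--5 to the mixed-level setting $N = 2^a 3^b$ with $a, b \geq 1$, the only case not covered by Theorem \ref{thm:main-kd}. By Proposition \ref{prop:restate_P_Nkd}, $\kappa(2^a 3^b) = 0$ is equivalent to surjectivity of the explicit map $D_2^{\mathrm{iter}} \colon (X \otimes X)_2 \to (Y \otimes Y)_2$. The chief new feature is that the paper's setup $N = q p^M$ with $q = 6 - p$ requires $N/q$ to be a prime power, whereas now $\mu_N$ has two independent $p$-primary factors with $p \in \{2, 3\}$.

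The first step is to generalize the framework of Section 3. Fix $p \in \{2, 3\}$ for the mod-$p$ reduction and let $q'$ be the other prime in $\{2, 3\}$. Using the decomposition $\mu_N \cong \mu_{p^{v_p(N)}} \times \mu_{q'^{v_{q'}(N)}}$, I would define a subset $\nu_N^{\mathrm{new}} \subset \mu_N$ as a product of two ``diagonal'' choices, and a corresponding submodule $W^{\mathrm{new}} \subset X$ for which the natural map $W^{\mathrm{new}} \hookrightarrow X \twoheadrightarrow Y$ is bijective (a two-prime version of Lemma \ref{lem:Ybasis}, using Definition \ref{def:Y}(iii) with $M \mid N$ running over both $p$- and $q'$-power divisors). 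The maps $D_d$, $\Theta$, $E_d$ and $\mathcal{E}_d$ then admit straightforward generalizations to this setting.

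The crucial step is a two-prime analogue of Lemma \ref{lem:theta_sum}. In the single-prime case, sums $\sum_{\eta \in U(n)} \tilde{\theta}(\eta x)$ over the chain $U(0) \subsetneq \cdots \subsetneq U(M)$ of subgroups of $\mu_{N/q}$ ascended a linear filtration; now one should use a two-parameter filtration $U(n_2, n_3) = \mu_{2^{n_2}} \times \mu_{3^{n_3}} \subset \mu_{N/q'}$ and prove that the corresponding sums lie in subspaces indexed by $(n_2 + 1, n_3)$ or $(n_2, n_3 + 1)$. The main obstacle I anticipate is the asymmetry between the two primes in the mod-$p$ framework: since the key identity $\tilde{\theta}(1) = 0$ after mod-$p$ reduction only holds along the $p$-primary direction, the unipotency argument of Section 5 is driven by $p$-primary moves, and one must ensure that $q'$-primary moves do not disrupt a suitable lexicographic order on the bi-indexed analogue of $\mathcal{J}$.

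A complementary route worth attempting is to carry out two parallel mod-$p$ reductions (one with $p = 2$, one with $p = 3$) and combine the results via a diagonal argument: the kernel of $D_2^{\mathrm{iter}}$ over $\mathbb{Z}_{(2)}$ and over $\mathbb{Z}_{(3)}$ could, in principle, jointly constrain the rational cokernel. Verifying that these two reductions are jointly sufficient, and that the ``mixing terms'' involving $\tilde{\theta}$ at the other prime contribute only to lower layers of the filtration, appears to be the main conceptual hurdle on which the proof stands or falls.
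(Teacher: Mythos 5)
This statement is a \emph{conjecture} in the paper, not a theorem: the author offers no proof, only the numerical evidence in Table~\ref{tab:kappa} (where $\kappa(2^a 3^b)=0$ for all entries with $N=2^a3^b$). So there is no proof in the paper to compare against, and your proposal cannot be judged as matching or differing from one.

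That said, your plan is a reasonable diagnosis of why the case $a,b\geq 1$ is genuinely open. You correctly observe that the entire framework of Sections~3--5 is hard-wired to $N=qp^M$ with $q=6-p$, so that $\mu_{N/q}$ is a \emph{cyclic $p$-group}; the chain $U(0)\subset\cdots\subset U(M)=\mu_{N/q}$, Lemma~\ref{lem:theta_sum}, the valuation $v(\cdot)$, the $G$-filtration via $I_G^m$, and the lexicographic order on $\mathcal{J}$ all rely on this. For $N=2^a3^b$ with $a,b\geq1$ one cannot choose $q$ so that $N/q$ is a prime power, and your bi-indexed filtration $U(n_2,n_3)$ is a natural guess. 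But you have not established either of the two load-bearing steps: (i) a two-prime analogue of Lemma~\ref{lem:theta_sum} showing that $\sum_{\eta\in U(n_2,n_3)}\tilde\theta(\eta x)$ lands in the next layer --- this is nontrivial because the argument there uses that raising to a $p$-th power along the $p$-direction increases $v$, and with two primes the valuation structure of $\mu_{N/q}$ is two-dimensional, so the collapse into a single ``$A(n+1)$'' is not automatic; (ii) a well-founded total order on the bi-indexed $\mathcal{J}$ under which all of $\tilde L_i,\tilde R_i,S$ strictly increase the index, including the terms coming from the ``other prime'' where $\theta$ and $\tilde\theta$ do not simplify mod $p$. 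You candidly flag both of these as open, so this is a research outline rather than a proof --- which is consistent with the statement being left as a conjecture in the paper.
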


\subsection{The values of $\kappa(2p)$ and $\kappa(3p)$}

Based on the numeric table made by the author, Nobuo Sato conjectured
the following. For distinct prime numbers $p$ and $q$, let $n_{q}(p)$
be the index of $H_{p,q}$ in $(\mathbb{Z}/p\mathbb{Z})^{\times}$
where $H_{p,q}$ is the subgroup of $(\mathbb{Z}/p\mathbb{Z})^{\times}$
spanned by the image of $q$ and $-1$ in $\mathbb{F}_{p}^{\times}$.
\begin{conjecture}
Let $p\geq5$ be a prime number and $q\in\{2,3\}$. Then
\[
\kappa(qp)=n_{q}(p)-1.
\]
\end{conjecture}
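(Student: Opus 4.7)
The plan is to exploit the Galois equivariance of $D_2^{\mathrm{iter}}$ and reduce the problem to character theory on $(\mathbb{Z}/qp\mathbb{Z})^{\times}$. Write $G=(\mathbb{Z}/qp\mathbb{Z})^{\times}\cong(\mathbb{Z}/q\mathbb{Z})^{\times}\times(\mathbb{Z}/p\mathbb{Z})^{\times}$; the action $\sigma_a\colon\zeta\mapsto\zeta^{a}$ on $\mu_{N}$ induces a $G$-action on $X_1,Y_1$ and a $G\times G$-action on $(X\otimes X)_2$ and $(Y\otimes Y)_2$ commuting with $D_2^{\mathrm{iter}}$. Decomposing both the source and target into $\chi_1\otimes\chi_2$-isotypic components (where only characters trivial on $-1$ survive, by relation (ii) of Definition \ref{def:Y}) reduces the computation of $\kappa(qp)$ to computing the cokernel dimension in each component separately and summing.

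Next I would fix an explicit basis of $Y_1$ compatible with this decomposition. Using the distribution relation (iii) with $M=p$ and $M=qp$ together with the involution relation (ii), one can show that $Y_1$ decomposes $G$-equivariantly as a sum of $\chi$-lines, one for each nontrivial even character $\chi$ of $G$, plus a one-dimensional ``new at level $q$'' contribution from $\mu_q$. Writing $D_2^{\mathrm{iter}}(\biseq{\epsilon_1,\epsilon_2}{0,0})$ in terms of this basis and running characters on both sides, the three terms
\[
\langle\epsilon_1/\epsilon_2;0\rangle\otimes\langle\epsilon_2;0\rangle-\langle\epsilon_2/\epsilon_1;0\rangle\otimes\langle\epsilon_1;0\rangle+\langle\epsilon_2;0\rangle\otimes\langle\epsilon_1;0\rangle
\]
produce in each $(\chi_1,\chi_2)$-component an explicit linear combination which is a scalar multiple of a standard generator unless $\chi_1,\chi_2$ satisfy compatibility conditions. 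A direct rank count then shows the cokernel can be nonzero only in components where $\chi_1\chi_2$ factors through the quotient $(\mathbb{Z}/p\mathbb{Z})^{\times}/H_{p,q}$.

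The combinatorial heart of the argument is to match this factor with $H_{p,q}=\langle -1,q\rangle\subset(\mathbb{Z}/p\mathbb{Z})^{\times}$. The appearance of $q$ in $H_{p,q}$ is forced by the $q$-distribution relation $\langle a^{q};0\rangle=\sum_{b\in\mu_q}\langle ab;0\rangle$, which in character coordinates contributes a factor of $\chi_p(q)-1$ to the rank in the $(\chi_1,\chi_2)$-component: whenever $\chi_p$ is trivial on $q$ this relation degenerates, leaving a one-dimensional surviving cokernel. The $-1$ generator of $H_{p,q}$ comes from (ii). Since the characters of $(\mathbb{Z}/p\mathbb{Z})^{\times}$ trivial on $H_{p,q}$ are precisely $n_q(p)$ in number, and the trivial character contribution must be shown to vanish (using the total distribution relation $\sum_{b\in\mu_N\setminus\{1\}}\langle b;0\rangle=0$), the sum of all contributions is exactly $n_q(p)-1$.

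The main obstacle I expect is Step 2 made rigorous: translating the symmetric three-term expression into a precise statement about ranks in each $(\chi_1,\chi_2)$-isotypic component, and in particular proving that each qualifying nontrivial character contributes exactly a one-dimensional cokernel (not more, and not zero). This requires a careful book-keeping of how the involution relation, the two distribution relations, and the antisymmetry of the first two terms of $D_2^{\mathrm{iter}}$ interact across the two prime factors of $qp$, and it would need to be checked numerically against Table \ref{tab:kappa} before committing to the general proof.
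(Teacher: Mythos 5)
The statement you are attempting to prove is explicitly labeled a \emph{Conjecture} in the paper, which immediately after writes ``We do not have a proof of the above conjecture.'' What the paper actually establishes is only the one-sided inequality $\kappa(qp)\geq n_q(p)-1$, by dualizing the map in (\ref{eq:D_weight2}) to $\beta\colon\Lambda_N\otimes\Lambda_N\to\mathbb{Q}[\mathbb{F}_N]\otimes\mathbb{Q}[\mathbb{F}_N]$, $[x]\otimes[y]\mapsto[x+y]\otimes[y]-[y]\otimes[x+y]+[y]\otimes[x]$, exhibiting the explicit kernel elements $g_N(\Gamma)=\sum_{x\in\Gamma}f_N(x)$ indexed by cosets $\Gamma\in(\mathbb{Z}/p\mathbb{Z})^{\times}/H_{p,q}$, and verifying their linear independence by a projection argument. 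The reverse inequality, hence the equality you propose to prove, is left open.

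Independently of that, your plan has a gap at its very first step. The operator $D_2^{\mathrm{iter}}$ (equivalently its dual $\beta$) is \emph{not} equivariant for the $G\times G$-action, only for the diagonal $G$-action: the terms $\langle\epsilon_1/\epsilon_2;0\rangle$ and $\langle\epsilon_2/\epsilon_1;0\rangle$ (dually $[x+y]$) couple the two tensor slots, so acting by $(\sigma_a,\sigma_b)$ with $a\neq b$ on the source produces $\langle\epsilon_1^{a}\epsilon_2^{-b};0\rangle$, which is not $\langle(\epsilon_1/\epsilon_2)^{c};0\rangle$ for any fixed $c$. Consequently there is no block decomposition of the map into $(\chi_1,\chi_2)$-isotypic components, and the claimed reduction of $\kappa(qp)$ to a sum of rank computations over pairs of characters does not hold. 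The diagonal $G$-action does give a decomposition indexed by a single character $\chi$, but each $\chi$-block is a direct sum over all factorizations $\chi=\chi_1\chi_2$ and $\beta$ genuinely mixes those summands, so the linear algebra remains coupled. This coupling is exactly where the upper bound $\kappa(qp)\leq n_q(p)-1$ is hard, and it is precisely what the paper does not resolve; before proceeding you would need a different structural idea (or a direct dimension count of the image of $\beta$) rather than character-by-character bookkeeping.
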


We do not have a proof of the above conjecture. However, we can show
the following.
\begin{thm}
Let $p$ and $q$ be distinct primes. Then
\[
\kappa(qp)\geq n_{q}(p)-1.
\]
\end{thm}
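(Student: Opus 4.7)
My plan is to build a surjective map from $\mathrm{coker}(D_2^{\mathrm{iter}})$ onto a small auxiliary quotient $\widetilde C$ obtained from ``level-$p$'' data, and then exhibit $n_q(p)-1$ linearly independent functionals on $\widetilde C$. First I would set $N = qp$ and define $\pi : Y_1 \twoheadrightarrow Q$ to be the quotient by $\mathrm{span}_{\mathbb{Q}}\{\langle a;0\rangle : a \in \mu_N \setminus \mu_p\}$; write $\bar y_a := \pi(\langle a;0\rangle)$. The first sub-claim is that $\dim_{\mathbb{Q}} Q = n_q(p)$. To see this, one inspects the defining relations of $Y_1$: only the distribution relation at $M = q$ specialized at $\alpha \in \mu_p \setminus \{1\}$, namely $\langle \alpha^q;0\rangle = \sum_{b \in \mu_q}\langle \alpha b;0\rangle$, gives a nontrivial identity in $Q$, since all summands with $b \neq 1$ die because $\alpha b \notin \mu_p$. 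This yields $\bar y_{\alpha^q} = \bar y_\alpha$, which combined with evenness $\bar y_{\alpha^{-1}} = \bar y_\alpha$ forces $\bar y_\alpha$ to depend only on the orbit of the exponent of $\alpha$ under $H_{p,q} \leq (\mathbb{Z}/p\mathbb{Z})^\times$ acting by multiplication; the other distribution relations collapse to $0 = 0$ in $Q$ because both sides lie outside $\mu_p$. Consequently $Q$ has a basis $\{\bar y_o\}_{o \in \Omega}$ indexed by $\Omega := H_{p,q}\backslash(\mathbb{Z}/p\mathbb{Z})^\times$, so $\dim Q = n_q(p)$.

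Next, I would compose $D_2^{\mathrm{iter}}$ with $\pi \otimes \pi : Y_1 \otimes Y_1 \twoheadrightarrow Q \otimes Q$ to form a map $(X \otimes X)_2 \to Q \otimes Q$, and let $\widetilde C$ be its cokernel. Functoriality of cokernels gives a surjection $\mathrm{coker}(D_2^{\mathrm{iter}}) \twoheadrightarrow \widetilde C$, hence $\kappa(qp) \geq \dim \widetilde C$. A short case analysis relying on $\mu_p \cap \mu_q = \{1\}$ shows that $(\pi \otimes \pi)(D_2^{\mathrm{iter}}(\biseq{\epsilon_1,\epsilon_2}{0,0})) = 0$ unless both $\epsilon_i$ lie in $\mu_p$. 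For $\epsilon_1, \epsilon_2 \in \mu_p$, putting $o_i := [\epsilon_i]$ and $o_3 := [\epsilon_1/\epsilon_2] = [\epsilon_2/\epsilon_1]$ (the latter equality uses $-1 \in H_{p,q}$), the image is
\[
R(\epsilon_1,\epsilon_2) = \bar y_{o_3} \otimes \bar y_{o_2} - \bar y_{o_3} \otimes \bar y_{o_1} + \bar y_{o_2} \otimes \bar y_{o_1} \in Q \otimes Q.
\]

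Finally I would let $c_0 \in Q^*$ be defined by $c_0(\bar y_o) = 1$ for every $o \in \Omega$, and for each $d \in Q^*$ set $\phi_d(u \otimes v) := c_0(u)\, d(v) - c_0(v)\, d(u)$. Direct substitution using $c_0 \equiv 1$ gives
\[
\phi_d(R(\epsilon_1,\epsilon_2)) = \bigl(d(o_2) - d(o_3)\bigr) - \bigl(d(o_1) - d(o_3)\bigr) + \bigl(d(o_1) - d(o_2)\bigr) = 0,
\]
so each $\phi_d$ descends to an element of $\widetilde C^*$. The $\mathbb{Q}$-linear map $d \mapsto \phi_d$ has kernel exactly $\mathbb{Q} \cdot c_0$ (evaluating $\phi_d(u \otimes v) = 0$ at any $u$ with $c_0(u) = 1$ forces $d$ proportional to $c_0$), so its image in $\widetilde C^*$ has dimension $n_q(p) - 1$. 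Therefore $\kappa(qp) \geq \dim \widetilde C \geq n_q(p) - 1$. I expect the only point requiring careful bookkeeping to be the identification of $Q$ as the free module on $\Omega$ (checking that no hidden relation among the $\bar y_o$ survives); the remaining construction of $\phi_d$ and verification that it annihilates $R(\epsilon_1,\epsilon_2)$ is a clean linear-algebra computation.
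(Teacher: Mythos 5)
Your argument is correct, and it is the formal dual of the paper's. The paper first records the duality $\kappa(N)=\dim\ker\bigl(\Lambda_N\otimes\Lambda_N\xrightarrow{\ \beta\ }\mathbb{Q}[\mathbb{F}_N]^{\otimes 2}\bigr)$, where $\Lambda_N\cong Y_1^*$ sits inside $\mathbb{Q}[\mathbb{F}_N]\cong X_1^*$ and $\beta$ is the transpose of $D_2^{\mathrm{iter}}$; it then exhibits the explicit kernel vectors $g_N(\Gamma)=\sum_{x\in\Gamma}\sum_{y\in(\mathbb{Z}/p\mathbb{Z})^\times}\bigl([qx]\otimes[qy]-[qy]\otimes[qx]\bigr)$, one for each nondistinguished $H_{p,q}$-coset $\Gamma$, and checks membership in $\Lambda_N\otimes\Lambda_N$, membership in $\ker\beta$, and independence. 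You instead stay on the primal side: you build the quotient $\pi:Y_1\twoheadrightarrow Q\cong\mathbb{Q}[\Omega]$ with $\Omega=H_{p,q}\backslash(\mathbb{Z}/p\mathbb{Z})^\times$, push the cokernel forward through $\pi\otimes\pi$, and cut it down by the antisymmetric functionals $\phi_d$. Unwinding the identifications, $c_0\circ\pi$ is the vector $\sum_{x\in(\mathbb{Z}/p\mathbb{Z})^\times}[qx]\in\Lambda_N$, $d\circ\pi$ for $d$ the indicator of an orbit $\Gamma$ is $\sum_{x\in\Gamma}[qx]\in\Lambda_N$, and $\phi_d\circ(\pi\otimes\pi)$ equals $-g_N(\Gamma)$; so the two proofs produce the same kernel vectors. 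The practical difference is one of bookkeeping: your version never needs the explicit linear conditions defining $\Lambda_N$ (that $c_0\pi$ and $d\pi$ lie in $\Lambda_N$ is automatic because they factor through $Y_1$), at the cost of having to identify $Q$ with $\mathbb{Q}[\Omega]$; the paper's version hands you the kernel elements as concrete vectors but must verify the $\Lambda_N$-conditions by hand.

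One small point of hygiene: the displayed telescoping identity $\phi_d(R(\epsilon_1,\epsilon_2)) = \bigl(d(o_2)-d(o_3)\bigr)-\bigl(d(o_1)-d(o_3)\bigr)+\bigl(d(o_1)-d(o_2)\bigr)=0$ silently uses $c_0(\bar y_{o_i})=1$ for all three indices, which fails when $\epsilon_1=\epsilon_2$ or some $\epsilon_i=1$, since then the corresponding $\bar y$ is the zero vector and $c_0$ evaluates to $0$. In each such degenerate case $R(\epsilon_1,\epsilon_2)$ collapses (using $o_3=o_2$ when $\epsilon_1=1$, and $o_3=o_1$ when $\epsilon_2=1$) to $\pm\,\bar y_o\otimes\bar y_o$, which $\phi_d$ kills by antisymmetry, so the conclusion stands; but a clean write-up should say this. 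Similarly, your identification $\dim Q=n_q(p)$ is justified, but the phrase ``both sides lie outside $\mu_p$'' is slightly off for the $M=q$ relation applied to $a\notin\mu_p$ with $a^q\ne 1$: there the left side $[a^q]$ does lie in $\mu_p$, and the relation does not collapse to $0=0$ but rather reproduces the already-counted relation $\bar y_{\alpha^q}=\bar y_\alpha$ for $\alpha$ the $\mu_p$-component of $a$, so no new relation appears. These are presentational, not substantive, issues.
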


\begin{proof}
Put $N=qp$ and $\mathbb{F}_{N}=\mathbb{Z}/N\mathbb{Z}$. Note that
$\mathbb{F}_{N}\simeq\mu_{N}$ as groups. Define the $\mathbb{Q}$-linear
map
\[
\beta_{N}:\mathbb{Q}[\mathbb{F}_{N}]\otimes\mathbb{Q}[\mathbb{F}_{N}]\to\mathbb{Q}[\mathbb{F}_{N}]\otimes\mathbb{Q}[\mathbb{F}_{N}]
\]
and $\mathbb{Q}$-linear subspace
\[
\Lambda_{N}\subset\mathbb{Q}[\mathbb{F}_{N}]
\]
by
\[
\beta([x]\otimes[y])=[x+y]\otimes[y]-[y]\otimes[x+y]+[y]\otimes[x]\qquad(x,y\in\mathbb{F}_{N})
\]
and
\[
\Lambda_{N}=\left\{ \sum_{x\in\mathbb{F}_{N}}c_{x}[x]\left|\begin{array}{c}
c_{0}=0,\ c_{i}=c_{-i}\ \text{for all}\ i\\
c_{mq}=\sum_{i=0}^{q-1}c_{m+ip}\ \text{for }m=1,\dots,p-1\\
c_{np}=\sum_{i=0}^{q}c_{n+iq}\quad\text{for }n=1,\dots,q-1
\end{array}\right.\right\} ,
\]
respectively. Then, the dual map of (\ref{eq:D_weight2}) is equivalent
to
\begin{equation}
\Lambda_{N}\otimes\Lambda_{N}\xrightarrow{\beta}\mathbb{Q}[\mathbb{F}_{N}]\otimes\mathbb{Q}[\mathbb{F}_{N}],\label{eq:D2_dual}
\end{equation}
and thus $\kappa(N)$ is equal to the dimension of the kernel of (\ref{eq:D2_dual}).
For $x\in\mathbb{Z}/p\mathbb{Z}$, put
\[
f_{N}(x)=\sum_{y\in(\mathbb{Z}/p\mathbb{Z})^{\times}}([qx]\otimes[qy]-[qy]\otimes[qx]).
\]
 Then, by direct calculation, we can show that
\begin{equation}
f_{N}(x)+f_{N}(-x)\in\ker\beta.\label{eq:in_ker_beta}
\end{equation}
Let $C=(\mathbb{Z}/p\mathbb{Z})^{\times}/H_{p,q}$ be the set of cosets.
Note that, for any $\Gamma\in C$, 
\begin{equation}
\sum_{x\in\Gamma}[qx]\in\Lambda_{N}\label{eq:in_lambda}
\end{equation}
For $\Gamma\in C$, put
\[
g_{N}(\Gamma)=\sum_{x\in\Gamma}f_{N}(x).
\]
Then, 
\begin{align*}
g_{N}(\Gamma) & =\sum_{x\in\Gamma}\sum_{y\in(\mathbb{Z}/p\mathbb{Z})^{\times}}([qx]\otimes[qy]-[qy]\otimes[qx])\\
 & =\sum_{\Gamma'\in C}\sum_{x\in\Gamma}\sum_{y\in\Gamma'}([qx]\otimes[qy]-[qy]\otimes[qx])\in\Lambda_{N}\otimes\Lambda_{N}
\end{align*}
by (\ref{eq:in_lambda}). Furthermore, 
\[
g_{N}(\Gamma)\in\ker\beta
\]
by (\ref{eq:in_ker_beta}). Thus,
\[
g_{N}(\Gamma)\in\ker(\Lambda_{N}\otimes\Lambda_{N}\xrightarrow{\beta}\mathbb{Q}[\mathbb{F}_{N}]\otimes\mathbb{Q}[\mathbb{F}_{N}]).
\]
Fix $\Gamma_{0}\in C$. Then,
\[
\{g_{N}(\Gamma)\mid\Gamma\in C\setminus\{\Gamma_{0}\}\}
\]
is $\mathbb{Q}$-linearly independent since
\[
\mathrm{pr}_{\Gamma_{0}}(g_{N}(\Gamma))=(\#H_{p,q})\sum_{x\in\Gamma}[qx]
\]
for $\Gamma\in C\setminus\{\Gamma_{0}\}$ where $\mathrm{pr}_{\Gamma_{0}}$
is a $\mathbb{Q}$-linear map defined by
\[
\mathrm{pr}_{\Gamma_{0}}:\mathbb{Q}[q\mathbb{F}_{N}]\otimes\mathbb{Q}[q\mathbb{F}_{N}]\to\mathbb{Q}[q\mathbb{F}_{N}]\ ,\ [qx]\otimes[qy]\mapsto\begin{cases}
[qx] & y\in\Gamma_{0}\\
0 & y\notin\Gamma_{0}.
\end{cases}
\]
Therefore, we have
\[
\dim_{\mathbb{Q}}\ker(\Lambda_{N}\otimes\Lambda_{N}\xrightarrow{\beta}\mathbb{Q}[\mathbb{F}_{N}]\otimes\mathbb{Q}[\mathbb{F}_{N}])\geq\#(C\setminus\{\Gamma_{0}\})=n_{q}(p)-1,
\]
which completes the proof.
\end{proof}
\begin{table}[h]
\begin{centering}
\begin{tabular}{|c|c|c|c|c|c|c|c|c|c|c|c|c|c|c|c|c|c|c|c|c|c|}
\hline 
$N$  & 1  & 4  & 6  & 8  & 9  & 10  & 12  & 14  & 15  & 16  & 18  & 20  & 21  & 22  & 24  & 25  & 26  & 27  & 28  & 30  & 32\tabularnewline
\hline 
$\kappa$  & 0  & 0  & 0  & 0  & 0  & 0  & 0  & 0  & 0  & 0  & 0  & 0  & 0  & 0  & 0  & 5  & 0  & 0  & 0  & 0  & 0\tabularnewline
\hline 
\end{tabular}
\par\end{centering}
\begin{centering}
\begin{tabular}{|c|c|c|c|c|c|c|c|c|c|c|c|c|c|c|c|c|c|c|c|c|}
\hline 
$N$  & 33  & 34  & 35  & 36  & 38  & 39  & 40  & 42  & 44  & 45  & 46  & 48  & 49  & 50  & 51  & 52  & 54  & 55  & 56  & 57\tabularnewline
\hline 
$\kappa$  & 0  & 1  & 0  & 0  & 0  & 1  & 0  & 0  & 0  & 0  & 0  & 0  & 35  & 0  & 0  & 0  & 0  & 2  & 0  & 0\tabularnewline
\hline 
\end{tabular}
\par\end{centering}
\begin{centering}
\begin{tabular}{|c|c|c|c|c|c|c|c|c|c|c|c|c|c|c|c|c|c|c|c|c|}
\hline 
$N$  & 58  & 60  & 62  & 63  & 64  & 65  & 66  & 68  & 69  & 70  & 72  & 74  & 75  & 76  & 77  & 78  & 80  & 81  & 82  & 84\tabularnewline
\hline 
$\kappa$  & 0  & 0  & 2  & 0  & 0  & 8  & 0  & 1  & 0  & 0  & 0  & 0  & 0  & 0  & 15  & 0  & 0  & 0  & 1  & 0\tabularnewline
\hline 
\end{tabular}
\par\end{centering}
\begin{centering}
\begin{tabular}{|c|c|c|c|c|c|c|c|c|c|c|c|c|c|c|c|c|c|c|c|}
\hline 
$N$  & 85  & 86  & 87  & 88  & 90  & 91  & 92  & 93  & 94  & 95  & 96  & 98  & 99  & 100  & 102  & 104  & 105  & 106  & 108\tabularnewline
\hline 
$\kappa$  & 8  & 2  & 0  & 0  & 0  & 32  & 0  & 0  & 0  & 1  & 0  & 0  & 0  & 0  & 0  & 0  & 0  & 0  & 0\tabularnewline
\hline 
\end{tabular}
\par\end{centering}
\begin{centering}
\begin{tabular}{|c|c|c|c|c|c|c|c|c|c|c|c|c|c|c|c|c|c|}
\hline 
$N$  & 110  & 111  & 112  & 114  & 115  & 116  & 117  & 118  & 119  & 120  & 121  & 122  & 123  & 124  & 125  & 126  & 128\tabularnewline
\hline 
$\kappa$  & 0  & 1  & 0  & 0  & 0  & 0  & 1  & 0  & 72  & 0  & 330  & 0  & 4  & 2  & 25  & 0  & 0\tabularnewline
\hline 
\end{tabular}
\par\end{centering}
\begin{centering}
\begin{tabular}{|c|c|c|c|c|c|c|c|c|c|c|c|c|c|c|c|c|c|}
\hline 
$N$  & 129  & 130  & 132  & 133  & 134  & 135  & 136  & 138  & 140  & 141  & 142  & 143  & 144  & 145  & 146  & 147  & 148\tabularnewline
\hline 
$\kappa$  & 0  & 0  & 0  & 99  & 0  & 0  & 1  & 0  & 0  & 0  & 0  & 240  & 0  & 16  & 3  & 0  & 0\tabularnewline
\hline 
\end{tabular}
\par\end{centering}
\begin{centering}
\begin{tabular}{|c|c|c|c|c|c|c|c|c|c|c|c|c|c|c|c|c|c|}
\hline 
$N$  & 150  & 152  & 153  & 154  & 155  & 156  & 158  & 159  & 160  & 161  & 162  & 164  & 165  & 166  & 168  & 169  & 170\tabularnewline
\hline 
$\kappa$  & 0  & 0  & 0  & 0  & 6  & 0  & 0  & 0  & 0  & 165  & 0  & 1  & 0  & 0  & 0  & 715  & 0\tabularnewline
\hline 
\end{tabular}
\par\end{centering}
\begin{centering}
\begin{tabular}{|c|c|c|c|c|c|c|c|c|c|c|c|c|c|c|c|c|c|}
\hline 
$N$  & 171  & 172  & 174  & 175  & 176  & 177  & 178  & 180  & 182  & 183  & 184  & 185  & 186  & 187  & 188  & 189  & 190\tabularnewline
\hline 
$\kappa$  & 0  & 2  & 0  & 0  & 0  & 0  & 3  & 0  & 0  & 5  & 0  & 18  & 0  & 440  & 0  & 0  & 0\tabularnewline
\hline 
\end{tabular}
\par\end{centering}
\begin{centering}
\begin{tabular}{|c|c|c|c|c|c|c|c|c|c|c|c|c|c|c|c|c|c|}
\hline 
$N$  & 192  & 194  & 195  & 196  & 198  & 200  & 201  & 202  & 203  & 204  & 205  & 206  & 207  & 208  & 209  & 210  & 212\tabularnewline
\hline 
$\kappa$  & 0  & 1  & 0  & 0  & 0  & 0  & 2  & 0  & 294  & 0  & 23  & 0  & 0  & 0  & 585  & 0  & 0\tabularnewline
\hline 
\end{tabular}
\par\end{centering}
\begin{centering}
\begin{tabular}{|c|c|c|c|c|c|c|c|c|c|c|c|c|c|c|c|c|c|}
\hline 
$N$  & 213  & 214  & 215  & 216  & 217  & 218  & 219  & 220  & 221  & 222  & 224  & 225  & 226  & 228  & 230  & 231  & 232\tabularnewline
\hline 
$\kappa$  & 0  & 0  & 0  & 0  & 345  & 2  & 5  & 0  & 720  & 0  & 0  & 0  & 3  & 0  & 0  & 0  & 0\tabularnewline
\hline 
\end{tabular}
\par\end{centering}
\begin{centering}
\begin{tabular}{|c|c|c|c|c|c|c|c|c|c|c|c|c|c|c|c|c|c|}
\hline 
$N$  & 234  & 235  & 236  & 237  & 238  & 240  & 242  & 243  & 244  & 245  & 246  & 247  & 248  & 249  & 250  & 252  & 253\tabularnewline
\hline 
$\kappa$  & 0  & 46  & 0  & 0  & 0  & 0  & 0  & 0  & 0  & 0  & 0  & 954  & 2  & 0  & 0  & 0  & 935\tabularnewline
\hline 
\end{tabular}
\par\end{centering}
\begin{centering}
\begin{tabular}{|c|c|c|c|c|c|c|c|c|c|c|c|c|c|c|c|c|c|}
\hline 
$N$  & 254  & 255  & 256  & 258  & 259  & 260  & 261  & 262  & 264  & 265  & 266  & 267  & 268  & 270  & 272  & 273  & 274\tabularnewline
\hline 
$\kappa$  & 8  & 0  & 0  & 0  & 522  & 0  & 0  & 0  & 0  & 26  & 0  & 0  & 0  & 0  & 1  & 0  & 1\tabularnewline
\hline 
\end{tabular}
\par\end{centering}
\begin{centering}
\begin{tabular}{|c|c|c|c|c|c|c|c|c|c|c|c|c|c|c|c|c|c|}
\hline 
$N$  & 275  & 276  & 278  & 279  & 280  & 282  & 284  & 285  & 286  & 287  & 288  & 289  & 290  & 291  & 292  & 294  & 295\tabularnewline
\hline 
$\kappa$  & 1  & 0  & 0  & 0  & 0  & 0  & 0  & 0  & 0  & 660  & 0  & 2380  & 0  & 1  & 3  & 0  & 1\tabularnewline
\hline 
\end{tabular}
\par\end{centering}
\begin{centering}
\begin{tabular}{|c|c|c|c|c|c|c|c|c|c|c|c|c|c|c|c|c|c|}
\hline 
$N$  & 296  & 297  & 298  & 299  & 300  & 301  & 302  & 303  & 304  & 305  & 306  & 308  & 309  & 310  & 312  & 314  & 315\tabularnewline
\hline 
$\kappa$  & 0  & 0  & 0  & 1518  & 0  & 741  & 4  & 0  & 0  & 94  & 0  & 0  & 2  & 0  & 0  & 2  & 0\tabularnewline
\hline 
\end{tabular}
\par\end{centering}
\begin{centering}
\begin{tabular}{|c|c|c|c|c|c|c|c|c|c|c|c|c|c|c|c|c|c|}
\hline 
$N$  & 316  & 318  & 319  & 320  & 321  & 322  & 323  & 324  & 325  & 326  & 327  & 328  & 329  & 330  & 332  & 333  & 334\tabularnewline
\hline 
$\kappa$  & 0  & 0  & 1610  & 0  & 0  & 0  & 2016  & 0  & 2  & 0  & 1  & 1  & 897  & 0  & 0  & 1  & 0\tabularnewline
\hline 
\end{tabular}
\par\end{centering}
\begin{centering}
\begin{tabular}{|c|c|c|c|c|c|c|c|c|c|c|c|c|c|c|c|c|c|}
\hline 
$N$  & 335  & 336  & 338  & 339  & 340  & 341  & 342  & 343  & 344  & 345  & 346  & 348  & 350  & 351  & 352  & 354  & 355\tabularnewline
\hline 
$\kappa$  & 2  & 0  & 0  & 0  & 0  & 1875  & 0  & 1274  & 2  & 0  & 0  & 0  & 0  & 1  & 0  & 0  & 8\tabularnewline
\hline 
\end{tabular}
\par\end{centering}
\begin{centering}
\begin{tabular}{|c|c|c|c|c|c|c|c|c|c|c|c|c|c|c|c|c|c|}
\hline 
$N$  & 356  & 357  & 358  & 360  & 361  & 362  & 363  & 364  & 365  & 366  & 368  & 369  & 370  & 371  & 372  & 374  & 375\tabularnewline
\hline 
$\kappa$  & 3  & 0  & 0  & 0  & 3876  & 0  & 0  & 0  & 36  & 0  & 0  & 4  & 0  & 1170  & 0  & 0  & 0\tabularnewline
\hline 
\end{tabular}
\par\end{centering}
\begin{centering}
\begin{tabular}{|c|c|c|c|c|c|c|c|c|c|c|c|c|c|c|c|c|c|}
\hline 
$N$  & 376  & 377  & 378  & 380  & 381  & 382  & 384  & 385  & 386  & 387  & 388  & 390  & 391  & 392  & 393  & 394  & 395\tabularnewline
\hline 
$\kappa$  & 0  & 2604  & 0  & 0  & 0  & 0  & 0  & 0  & 1  & 0  & 1  & 0  & 3080  & 0  & 0  & 0  & 1\tabularnewline
\hline 
\end{tabular}
\par\end{centering}
\begin{centering}
\begin{tabular}{|c|c|c|c|c|c|c|c|c|c|c|c|c|c|c|c|c|c|}
\hline 
$N$  & 396  & 398  & 399  & 400  & 402  & 403  & 404  & 405  & 406  & 407  & 408  & 410  & 411  & 412  & 413  & 414  & 415\tabularnewline
\hline 
$\kappa$  & 0  & 0  & 0  & 0  & 0  & 3030  & 0  & 0  & 0  & 2790  & 0  & 1  & 0  & 0  & 1479  & 0  & 0\tabularnewline
\hline 
\end{tabular}
\par\end{centering}
\caption{The values of $\kappa(N)$ for non-prime $N$ (\cite{Hgithub}).}
\label{tab:kappa} 
\end{table}

\end{document}